\newtheorem{thm}{Theorem}[section]
\newtheorem{lem}[thm]{Lemma}
\newtheorem{prop}[thm]{Proposition}
\newtheorem{cor}[thm]{Corollary}
\newtheorem{rem}[thm]{Remark}
\newtheorem*{open}{Open problem}
\theoremstyle{definition}
\newtheorem{defi}[thm]{Definition}
\newcommand{\del}{\partial}
\newcommand{\N}{\mathbb{N}}
\newcommand{\R}{\mathbb{R}}
\newcommand{\CM}{\mathcal{M}}
\newcommand{\CR}{\mathcal{R}}
\newcommand{\abs}[1]{\left\lvert #1 \right\rvert}
\newcommand{\set}[1]{\left\{ #1 \right\}}
\newcommand{\ds}{\displaystyle}
\newcommand{\dsum}{\ds\sum}
\newcommand{\bo}\boldsymbol{}
\newcommand{\bigo}[1]{\mathrm{O}\left( #1 \right)}
\newcommand{\eqskip}{ \vspace*{2mm}\\ }
\newcommand{\smallo}[1]{o\left( #1 \right)}
\newcommand\soutb{\bgroup\markoverwith{\textcolor{blue}{\rule[.5ex]{2pt}{1pt}}}\ULon}
\newcommand{\fr}[2]{\frac{\ds #1}{\ds #2}}
\newcommand{\wv}[1]{\omega_{ #1}}
\newcommand{\so}[1]{{\rm o}\left(#1\right)}
\renewcommand{\hat}{\widehat}
\renewcommand{\tilde}{\widetilde}
\newcommand{\eps}{\varepsilon}
\renewcommand{\phi}{\varphi}
\renewcommand{\bar}[1]{\overline{#1}}
\begin{document}

\title[Optimal copies and P\'olya's conjecture]
{Optimal unions of scaled copies of domains and P\'olya's conjecture}

\author[Pedro Freitas]{Pedro Freitas}
\author[Jean Lagac\'e]{Jean Lagac\'e}
\author[Jordan Payette]{Jordan Payette}

\address{Departamento de Matem\'{a}tica, Instituto Superior T\'{e}cnico, Universidade de Lisboa, Av. Rovisco Pais, 1049-001 Lisboa, Portugal
\& Grupo de F\'{\i}sica Matem\'{a}tica, Faculdade de Ci\^{e}ncias, Universidade de Lisboa, Campo Grande, Edif\'{\i}cio C6,
1749-016 Lisboa, Portugal}
\email{psfreitas@fc.ul.pt}

\address{Department of Mathematics, University College London, Gower Street,
WC1E 6BT, London, United Kingdom}
\email{j.lagace@ucl.ac.uk}

\address{D\'epartement de math\'ematiques et de statistique \\ Universit\'e de Montr\'eal \\ C. P. 6128,
Succ. Centre-ville \\ Montr\'eal, QC\\ H3C 3J7 \\ Canada}

\email{payettej@dms.umontreal.ca}

 \begin{abstract}
 Given a bounded Euclidean domain $\Omega$, we consider the sequence of optimisers of the $k^{\rm th}$ Laplacian eigenvalue
 within the family consisting of all possible disjoint unions of scaled copies
 of $\Omega$ with fixed total volume. We show that this sequence
 encodes information yielding conditions for $\Omega$ to satisfy P\'{o}lya's
 conjecture with either Dirichlet or Neumann boundary conditions. This is an extension of a result by Colbois and
 El Soufi which applies only to the case
 where the family of domains consists of {\it all} bounded domains. Furthermore, we fully classify the different possible behaviours for
 such sequences, depending on whether P\'{o}lya's conjecture holds for a given specific domain or not.
 This approach allows us to recover a stronger version of P\'{o}lya's original results for tiling
 domains satisfying some dynamical billiard conditions, and a strenghtening of Urakawa's bound in terms of packing density.
 \end{abstract}

\maketitle

\section{Introduction and main results}

\subsection{P\'olya's conjecture for Laplace eigenvalues}

For $d \ge 2$ let $\Omega \subset \R^d$ be a bounded open set with Lebesgue measure $|\Omega|$. 
We consider the Dirichlet eigenvalue problem
\begin{equation}
 \begin{cases}
  \Delta u + \lambda u = 0  & \text{in } \Omega\\
  u \equiv 0 &\text{on } \del \Omega.
 \end{cases}
\end{equation}
It is well known that the eigenvalues of the above problem are discrete and form a sequence
\begin{equation}
 0 < \lambda_1(\Omega) \le \lambda_2(\Omega) \le \lambda_3(\Omega) \dotso \nearrow \infty
\end{equation}
accumulating only at infinity.
Moreover, if the boundary $\del \Omega$ is Lipschitz, the Neumann problem
\begin{equation}
 \begin{cases}
  \Delta u + \mu u = 0  & \text{in } \Omega\\
  \del_\nu u \equiv 0 &\text{on } \del \Omega,
 \end{cases}
\end{equation}
where $\nu$ denotes the outer unit normal vector of $\Omega$, also has discrete spectrum and forms an increasing sequence
\begin{equation}
  0 = \mu_0(\Omega) \le \mu_1(\Omega) \le \dotso \nearrow \infty.
\end{equation}
Note that we choose the convention to start numbering Neumann eigenvalues
with $0$ instead of with $1$, which allows for a
cleaner statement of our theorems.
Both the Dirichlet and Neumann eigenvalues satisfy so-called \emph{Weyl asymptotics}
  \begin{equation}
    \lambda_k = \mu_k + \bigo{k^{1/d}} = \frac{4\pi^2}{(\wv d \abs \Omega)^{2/d}}k^{2/d} + \bigo{k^{1/d}},
  \end{equation}
  where $\wv d$ denotes the volume of the unit ball in $\R^d$.
If $\Omega$ satisfies some dynamical conditions, namely that the measure of
periodic trajectories in the billiard flow is zero, the eigenvalues also satisy
two-term Weyl asymptotics~\cite{sava}
\begin{equation}
 \label{twotermweyldir}
 \lambda_{k} = \fr{4\pi^2 }{\left(\wv{d} |\Omega|\right)^{2/d}}k^{2/d} +
 \fr{2 \pi^2}{d}
 \fr{\wv{d-1}\left|\partial\Omega\right|}{\left(\wv{d}\left|\Omega\right|\right)^{\frac{d+1}{d}}}k^{1/d} + \so{k^{1/d}}
\end{equation}
and
\begin{equation}
 \label{twotermweylneu}
 \mu_{k} = \fr{4\pi^2 }{\left(\wv{d} |\Omega|\right)^{2/d}}k^{2/d} -
 \fr{2 \pi^2}{d}
 \fr{\wv{d-1}\left|\partial\Omega\right|}{\left(\wv{d}\left|\Omega\right|\right)^{\frac{d+1}{d}}}k^{1/d} + \so{k^{1/d}}.
\end{equation}
From these asymptotic formulae it is clear that given a domain
$\Omega$ for which \eqref{twotermweyldir} and \eqref{twotermweylneu} hold there exists $k^{*}=k^{*}(\Omega)$ such that for all $k \ge k^*$,
\begin{equation}
  \label{eq:strict}
  \mu_k(\Omega) < \frac{4 \pi^2}{(\omega_d|\Omega|)^{2/d}} k^{2/d} < \lambda_k(\Omega).
\end{equation}
Furthermore, the Rayleigh--Faber--Krahn~\cite{fab,krahn1} and the Hong--Krahn--Szego~\cite{krahn2} inequalities imply that the right-hand side
inequality holds for $\lambda_{1}$ and $\lambda_{2}$, while the Szeg\H{o}--Weinberger~\cite{wein} and the Bucur--Henrot~\cite{buhe} inequalities ensure
the inequality on the left-hand side for $\mu_1$ and $\mu_{2}$. In this paper, we investigate a conjecture of P\'olya.
\begin{open}[P\'olya's conjecture]
  For all $\Omega\subset \R^d$ and all $k \in \N$,
 \begin{equation}\label{PolyaConj}
  \mu_k(\Omega) \le  \frac{4 \pi^2}{(\omega_d|\Omega|)^{2/d}} k^{2/d} \le \lambda_k(\Omega).
 \end{equation}
\end{open}\vspace{6pt}
In $1961$ P\'{o}lya proved that the above inequalities do hold for all domains which tile the plane, and conjectured
that this would be true for general domains~\cite{poly} --- see~\cite{kell} for
the proof for general tiling domains with Neumann boundary conditions. 
P\'{o}lya's result was later extended to tiling domains in higher dimensions by Urakawa, who also obtained lower bounds
for all Dirichlet eigenvalues of a domain based on its lattice packing density~\cite{urak}.

For general domains, the best results so far remain those by Berezin~\cite{B} and  Li and Yau~\cite{LY} in the Dirichlet case,
while for Neumann eigenvalues the corresponding result was established
by Kr\"{o}ger~\cite{K}. In either case, these are based on sharp bounds for the average of the first $k$ eigenvalues of the Laplacian, namely,
\begin{equation}\label{blyineq}
  \frac{\ds 1}{\ds k}\dsum_{j=0}^{k-1}\mu_{j}(\Omega) \leq  \frac{4 \pi^2
  d}{d+2}\left(\fr{k}{\wv d|\Omega|}\right)^{2/d}
  \leq \frac{\ds 1}{\ds k}\dsum_{j=1}^{k} \lambda_{j}(\Omega).
\end{equation}
From these inequalities and an estimate in \cite{K} it follows that, for individual eigenvalues,
\[
  \lambda_k(\Omega) \geq \frac{4 \pi^2 d}{d+2}\left(\fr{k}{\wv d|\Omega|}\right)^{2/d},
\] 
and
\[
  \mu_{k}(\Omega) \leq  4\pi^2 \left( \frac{d+2}{2}
  \right)^{2/d}\left(\fr{k}{\wv d|\Omega|}\right)^{2/d},
\]
which both fall short of~\eqref{PolyaConj}.

Note that inequalities \eqref{eq:strict} lead naturally to a strenghtening of
P\'olya's conjecture, which we also investigate.
\begin{open}[Strong P\'olya's conjecture]
  For all $\Omega\subset \R^d$ and all $k \in \N$,
 \begin{equation}\label{StrongPolyaConj}
  \mu_k(\Omega) <  \frac{4 \pi^2}{(\omega_d |\Omega|)^{2/d}} k^{2/d} < \lambda_k(\Omega).
 \end{equation}
\end{open}\vspace{6pt}

As mentioned above, the first two eigenvalues are known to satisfy the strong P\'{o}lya's inequalities since their extremal values are known. However,
for higher eigenvalues and although some conjectures do exist, there are no other situations where the extremal values are known.
Furthermore, numerical optimisations carried out within the last fifteen years by different researchers using different methods have made it clear
that not much structure at this level is to be expected in the mid-frequency range, in the sense that extremal sets are not described in
terms of known functions --- see~\cite{oude,anfr1} for the Dirichlet and~\cite{anfr1} for the Neumann problems respectively; see also~\cite{anfr3,boou}
for the same problem but with a perimeter restriction. In the planar case, it
has also been shown that, except for the first four eigenvalues,
the Dirichlet extremal domains are never balls or unions of balls~\cite{berg}. Recently, it has been shown that the Faber--Krahn inequality may
be used to extend the range of low Dirichlet eigenvalues for which P\'{o}lya's conjecture holds~\cite{fre2}. For instance, in dimensions three and larger,
eigenvalues up to $\lambda_{4}$ also satisfy P\'{o}lya's conjecture, with the number of eigenvalues which may be shown to do so by this method
growing exponentially with the dimension.

These findings prompted the study of what happens at the other end of the spectrum, in the high-frequency regime, in the
hope that some structure could be recovered there. The first of such results proved that, when restricted to the particular case of rectangles,
extremal domains converge to the square as $k$ goes to infinity~\cite{anfr2}. In other words, they converge to the domain with minimal perimeter
among all of those in the class of rectangles with fixed area, and indeed, just like with the first eigenvalue, the geometric isoperimetric
inequality plays a role in the proof. This was followed by an extension of these results to higher-dimension rectangles in both the
Dirichlet and Neumann cases~\cite{berburgit,bergit,gitlar,mars}. In the case of general planar domains with a perimeter restriction, it
was shown in~\cite{bufr} that extremal sets converge to the disk with the same perimeter as $k$ goes to infinity, thus again displaying
convergence to the geometric extremal set. Some results regarding existence of convergent subsequences within classes of convex domains
and under a measure restriction were also obtained in~\cite{lars}.

The connection between the problem of determining extremal domains for the $k^{\rm th}$ eigenvalue and P\'{o}lya's conjecture was established
in 2014 by Colbois and El Soufi~\cite{CE}. There they showed that the sequences of extremal values $\left(\lambda_{k}^{*}\right)^{d/2}$
(Dirichlet) and $\left(\mu_{k}^{*}\right)^{d/2}$ (Neumann) are subadditive and superadditive, respectively. As a consequence of
Fekete's lemma, both sequences $\lambda_{k}^{*}/k^{2/d}$ and $\mu_{k}^{*}/k^{2/d}$ are convergent as $k$ goes to infinity and, furthermore,
P\'{o}lya's conjecture is seen to be equivalent to
\[
 \lim_{k\to\infty} \fr{\lambda_{k}^{*}}{k^{2/d}} = \fr{4\pi^2}{\left(|\Omega|\omega_{d}\right)^{2/d}} \mbox{ and }
  \lim_{k\to\infty} \fr{\mu_{k}^{*}}{k^{2/d}} = \fr{4\pi^2}{\left(|\Omega|\omega_{d}\right)^{2/d}},
\]
in the Dirichlet and Neumann cases, respectively.

A major obstacle in attacking the general P\'olya's conjecture is that it is not even known if there exists an open domain
minimising $\lambda_k$ or maximising $\mu_k$ for $k\ge 3$ under volume
constraint. This prevents one from using properties of the minimisers
to argue in favor of the conjecture. Our aim will be to
restrict ourselves to the study of classes of domain within which we are able to
show existence of extremisers, but within which the subadditivity and
superadditivity results of Colbois and El Soufi still hold. Note that
subadditivity or superadditivity for the optimal eigenvalues do not hold for all
families of domains -- if we take as a family of domains rectangles of unit area, the
extremisers always exist but the optimal Dirichlet eigenvalues are 
$\lambda_{1}^{*} = 2\pi^2$, $\lambda_{3}^{*} = 5\pi^2$ and 
$\lambda_{4}^{*}=35\pi^2/(2\sqrt{6})\approx 7.144 \pi^2$, see~\cite{anfr2}.

\subsection{Suitable families of domains}

Before stating our results, let us define precisely the class of domains under
consideration in this paper. Given $r \in (0,\infty)$ and $\Omega \subset \R^d$,
we denote by $r\Omega$ any subset of $\R^d$ obtained from $\Omega$ as a result
of a homothety with scale factor $r$ and an isometry. 

\begin{defi} \label{defi:fam}
  Let $\Omega_1,\dotsc,\Omega_n$ be bounded, connected, open subsets
of $\R^d$. We denote
  \begin{equation}
    \CR:= \CR(\Omega_1,\dotsc,\Omega_n) := \set{\,\bigsqcup_{i \in I} r_i
      \Omega_{n_i} \, : \,  I \text{ countable}, n_i \in \set{1,\dotsc,n}, \sum_{i \in I}r_i^d < \infty \,
  }.
  \end{equation}
  The sets $\Omega_1,\dotsc,\Omega_n$ are called the generators for $\CR$. 
  The above notation is to be understood in the sense that all sets $\Upsilon \in
  \CR$ are subsets of $\R^d$ all of whose connected components are of the form
  $r_i \Omega_{n_i}$ for $i \in I$. We denote by $\nu(\Upsilon)$ the number of
  connected components of $\Upsilon$, by $\abs \Upsilon$ its volume and we
  slightly abuse notation by denoting by $\abs{\del \Upsilon}$ the $(d-1)$-dimensional Hausdorff measure of the boundary.
   We also
  observe that the family $\CR$ is closed under disjoint union and homothety, up
  to rearrangement. Whenever the Neumann eigenvalue problem is discussed, it is also assumed
  the generators have Lipschitz boundary.
\end{defi}
One particular instance of this type of families, namely, those generated by rectangles, was used recently
to study the possible asymptotic behaviour of extremal sets in the case of Robin boundary conditions~\cite{frke}.

The following elementary facts about scaling properties of volumes and
eigenvalues will be used repeatedly in this paper:
\begin{itemize}
  \item $\abs{r \Upsilon} = r^d \abs \Upsilon$;
  \item $\abs{r \del \Upsilon} = r^{d-1} \abs{\del \Upsilon}$;
  \item $\lambda_k(r\Upsilon) = r^{-2} \lambda_k(\Upsilon)$;
  \item $\mu_k(r\Upsilon) = r^{-2} \mu_k(\Upsilon)$;
\end{itemize}
It is easy to see from the first two points that the generator $\Omega_j$ minimising the
isoperimetric ratio among $\Omega_1,\dotsc,\Omega_n$ also does so in $\CR$. The
first, third and four bullet points imply that the quantities
$\lambda_k(\Upsilon)^{d/2}\abs \Upsilon$ and $\mu_k(\Upsilon)^{d/2} \abs
\Upsilon$ are invariant by homothety.
\begin{defi}
We define
\begin{equation}
 \lambda_k^*(\CR) = \inf_{\substack{\Upsilon \in \CR \\ |\Upsilon| \le 1}} \lambda_k(\Upsilon)
\end{equation}
and
\begin{equation}
 \mu_k^*(\CR) = \sup_{\substack{\Upsilon \in \CR \\ |\Upsilon| \ge 1}} \mu_k(\Upsilon).
\end{equation}
We shall say that a domain $\Upsilon \in \CR$ is a \textit{minimiser for} $\lambda_k^*(\CR)$ or that it \textit{realises} $\lambda_k^*(\CR)$ if
$|\Upsilon| \le 1$ and if $\lambda_k(\Upsilon) = \lambda_k^*(\CR)$. Similarly, a domain can be a \textit{maximiser} for $\mu_k^*(\CR)$ or it \textit{realises}
$\mu_k^*(\CR)$. Note that an extremiser necessarily verifies $|\Upsilon| =1$. \vspace{6pt}
\end{defi}

In Section \ref{sec:suitable}, we show that these families $\CR$ of domains are suitable for the study of asymptotic
eigenvalue optimisation. By suitable, we understand that for every $k$, there
exists $\Upsilon \in \CR$ realising the extremal eigenvalues, and that the results
of \cite{WK,PRF,CE} describing the extremal eigenvalues and their associated extremisers
still hold within the families $\CR$. Existence of the extremisers is proved in
Theorems \ref{existence_min} and \ref{existence_max}. 

The properties of extremal eigenvalues and their associated extremisers are the
subject of Theorems \ref{subadditivity}--\ref{prf}. They rely on the fact that
two properties are needed for the proofs of these theorems : closedness under
homotheties, and under disjoint unions. Of specific use is Corollary \ref{BLY},
which says that it is sufficient to study the limit of the sequence of optimal
eigenvalues if one wants to get universal bounds within a family $\CR$.

\subsection{A trichotomy for P\'olya's conjecture}

In Section \ref{sec:trichotomy}, we restrict our search to families $\CR$ generated by a single
domain $\Omega$. There is no loss of generality here : we will first show that
if P\'olya's conjecture holds
within two families $\CR(\Omega_1)$ and $\CR(\Omega_2)$ in either its standard
or strong form, then
it also holds in $\CR(\Omega_1,\Omega_2)$. 

Our aim is to characterise the structure of the set of optimisers in
$\CR(\Omega)$ depending on whether P\'olya's conjecture holds or fails in
$\CR$. This
gives, in principle, a way to investigate the conjecture for a given domain.
We note that P\'olya's conjecture remains open for any domain that does not tile
$\R^d$, notably in the case of the ball, even though we have explicit formulae for the
eigenvalues. 

Our main theorem is as follows.

\begin{thm} \label{thm:trichotomy}
  The following trichotomy holds : either
  \begin{enumerate}
    \item the generator $\Omega$ realises $\lambda_k^*(\CR)$ infinitely often and P\'olya's conjecture for Dirichlet eigenvalues holds in $\CR(\Omega)$;
    \item the generator $\Omega$ realises $\lambda_k^*$ only finitely many times, P\'olya's conjecture for Dirichlet eigenvalues holds in $\CR(\Omega)$ and, for
      infinitely many $k \in \N$,
      \begin{equation}
        \frac{\lambda_k^*(\CR)^{d/2}}{k} = \frac{(2 \pi)^d}{\omega_d},
      \end{equation}
      or
    \item the generator $\Omega$ realises $\lambda_k^*$ only finitely many
      times, P\'olya's conjecture for Dirichlet eigenvalues does not hold in $\CR(\Omega)$ and, for
      infinitely many $k \in \N$,
      \begin{equation}
        \frac{\lambda_k^*(\CR)^{d/2}}{k} = \inf_k \frac{\lambda_k^*(\CR)^{d/2}}{k}.
      \end{equation}
  \end{enumerate}
  The same trichotomy holds replacing all instances of Dirichlet with Neumann, and of $\lambda$ with $\mu$. 
\end{thm}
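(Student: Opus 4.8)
The plan is to study the normalised optimal values $\lambda_k^*(\CR)^{d/2}$ for $\CR=\CR(\Omega)$. By the subadditivity established in Theorems~\ref{subadditivity}--\ref{prf} together with Fekete's lemma, the limit
\[
  L:=\lim_{k\to\infty}\frac{\lambda_k^*(\CR)^{d/2}}{k}
\]
exists and equals $\inf_{k\ge1}\lambda_k^*(\CR)^{d/2}/k$. Two further inputs will be used. First, testing the single scaled copy of $\Omega$ and invoking the Weyl law gives $\lambda_k^*(\CR)^{d/2}/k\le|\Omega|\,\lambda_k(\Omega)^{d/2}/k\to(2\pi)^d/\omega_d$, so $L\le(2\pi)^d/\omega_d$; combining this with Corollary~\ref{BLY} shows that P\'olya's conjecture holds in $\CR(\Omega)$ if and only if $L=(2\pi)^d/\omega_d$. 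Second, the Wolf--Keller-type recursion valid in $\CR(\Omega)$ (Theorems~\ref{existence_min}--\ref{prf}) states that for every $k\ge1$,
\[
  \lambda_k^*(\CR)^{d/2}=\min\Big\{|\Omega|\,\lambda_k(\Omega)^{d/2},\ \min_{\substack{i+j=k\\i,j\ge1}}\big(\lambda_i^*(\CR)^{d/2}+\lambda_j^*(\CR)^{d/2}\big)\Big\},
\]
and $\Omega$ realises $\lambda_k^*(\CR)$ precisely when the first term attains the minimum.

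Suppose first that $\Omega$ realises $\lambda_k^*(\CR)$ for infinitely many $k$. Along such $k$ one has $\lambda_k^*(\CR)^{d/2}/k=|\Omega|\,\lambda_k(\Omega)^{d/2}/k\to(2\pi)^d/\omega_d$ by the Weyl law for $\Omega$; since the whole sequence converges to $L$, this forces $L=(2\pi)^d/\omega_d$, hence P\'olya's conjecture holds in $\CR(\Omega)$. This is alternative~(1).

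Now suppose $\Omega$ realises $\lambda_k^*(\CR)$ only for $k\le k_0$, with $k_0$ finite (note $k_0\ge1$, as $k=1$ forces a single component). Put $\eps_k:=\lambda_k^*(\CR)^{d/2}-kL\ge0$, so $\eps_k/k\to0$. For $k>k_0$ the first term of the recursion does not attain the minimum, hence $\eps_k=\min_{i+j=k,\,i,j\ge1}(\eps_i+\eps_j)$; iterating this identity, every $k>k_0$ can be written $k=k_1+\dots+k_N$ with each $1\le k_l\le k_0$, $\eps_k=\sum_{l=1}^N\eps_{k_l}$, and $N\ge k/k_0$. Writing $\beta:=\min_{1\le m\le k_0}\eps_m\ge0$ we obtain $\eps_k\ge(k/k_0)\beta$ for all $k>k_0$, whence $\beta=0$ because $\eps_k/k\to0$; so $\eps_{k^*}=0$ for some $k^*\le k_0$. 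Subadditivity then gives $\lambda_{mk^*}^*(\CR)^{d/2}\le m\,\lambda_{k^*}^*(\CR)^{d/2}=mk^*L\le\lambda_{mk^*}^*(\CR)^{d/2}$, so $\eps_{mk^*}=0$ for all $m\ge1$, i.e. $\lambda_k^*(\CR)^{d/2}/k=L=\inf_k\lambda_k^*(\CR)^{d/2}/k$ for infinitely many $k$. If P\'olya's conjecture holds in $\CR(\Omega)$ then $L=(2\pi)^d/\omega_d$, giving alternative~(2); otherwise $L<(2\pi)^d/\omega_d$, giving alternative~(3). Since the three alternatives are mutually exclusive and exhaustive this proves the Dirichlet trichotomy, and the Neumann case follows by the identical argument with subadditivity replaced by superadditivity, $\min$ by $\max$ and $\lambda$ by $\mu$ (the $0$-based numbering of $\mu$ is immaterial to the argument).

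The crux is the last paragraph. One cannot conclude $\eps_k=0$ infinitely often merely from $\eps_k\ge0$, $\eps_k/k\to0$ and subadditivity of $(\eps_k)$ --- this already fails for $\eps_k=\log k$. What makes it work is that, once $\Omega$ is a minimiser only finitely often, the recursion holds \emph{with equality and without the single-component term} for all large $k$, so $\eps_k$ is literally a sum of $\gtrsim k/k_0$ of its own earlier values; combined with $\eps_k=\mathrm{o}(k)$ this bootstrap forces a vanishing value $\eps_{k^*}$, which subadditivity then propagates along the arithmetic progression $k^*\Z_{\ge1}$.
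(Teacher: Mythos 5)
Your argument is correct and follows essentially the same route as the paper: case (1) is the paper's Proposition~\ref{Infinite} (Weyl's law along the subsequence where $\Omega$ is the minimiser, combined with Corollary~\ref{BLY}), and your $\eps_k$-bootstrap for finite $J$ is a numerical repackaging of the paper's Proposition~\ref{Minimum}, which iterates the Wolf--Keller decomposition of Lemma~\ref{wk} down to indices in $J$ and averages to show the infimum is attained, after which subadditivity propagates equality along multiples of the attaining index. The only cosmetic difference is that the paper identifies the attaining index as an element of $J$ realising $\min_{j\in J}\lambda_j(\Omega)^{d/2}/j$, whereas you only locate it in $\{1,\dots,k_0\}$, which suffices for the trichotomy.
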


In Theorem \ref{smallo}, we furthermore obtain an indication of when $\Omega$ can realise $\lambda_k^*(\CR)$ or $\mu_k^*(\CR)$ infinitely often. Namely, we show that as
soon as there exists a subsequence $\set{k_n}$ such that
the number of connected components of the domain realising $\lambda_{k_n}^*$,
respectively $\mu_{k_n}^*$ has slower than linear growth, then $\Omega$ realises $\lambda_k^*$, respectively $\mu_k^*$ infinitely often. This, in combination with Lemmas \ref{wk}
and \ref{prf} allows us to understand the propagation of extremal domains in
$\CR$ as $k \to \infty$.

Finally, when the generator $\Omega$ satisfies the two-term Weyl law
\eqref{twotermweyldir} or \eqref{twotermweylneu}, we obtain
the following list of equivalences with the strong P\'olya conjecture
\begin{thm}
  \label{thm:tfae}
  Suppose that $\Omega \subset \R^d$ is such that the two-term Weyl law
  \eqref{twotermweyldir} holds.
 Let $\Omega_k^* = \bigsqcup_{i \in \N} r_{i,k} \Omega$ be a sequence of
 domains realising $\lambda_k^*(\CR)$.
 Suppose that $\abs{\Omega_k^*} = 1$ and $r_{i,k} > r_{j,k}$ whenever $i < j$.  
 The following are equivalent :
 \begin{enumerate}
   \item The strong P\'olya conjecture for Dirichlet eigenvalues holds in $\CR(\Omega)$.
  \item The largest coefficient $r_{1,k} \to 1$ as $k \to \infty$.
  \item The largest coefficient $r_{1,k} \to 1$ along a subsequence.
 \end{enumerate}
 The same equivalence hold replacing all instances of Dirichlet with Neumann,
 $\lambda$ with $\mu$, and the two-term Weyl law \eqref{twotermweyldir} with
 \eqref{twotermweylneu}.
\end{thm}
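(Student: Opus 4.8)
The plan is to prove the chain of implications $(1) \Rightarrow (2) \Rightarrow (3) \Rightarrow (1)$, working throughout with the scale-invariant quantity $\lambda_k(\Upsilon)^{d/2}\abs{\Upsilon}$, whose infimum over $\CR$ with the volume normalisation gives $\lambda_k^*(\CR)^{d/2}$. Since $(2) \Rightarrow (3)$ is trivial, the two substantive directions are the first and the last. For the core estimate, recall that if $\Omega_k^* = \bigsqcup_{i} r_{i,k}\Omega$ realises $\lambda_k^*(\CR)$ with $\abs{\Omega_k^*} = 1$, then $\sum_i r_{i,k}^d = \abs{\Omega}^{-1}$ and $\lambda_k(\Omega_k^*) = \min_i r_{i,k}^{-2}\lambda_{m_i}(\Omega)$ for an appropriate distribution of the index $k$ among the components; in particular $\lambda_k^*(\CR) \le r_{1,k}^{-2}\lambda_k(\Omega)$ (by putting all the eigenvalue budget on the largest copy, treating the rest as overhead), while the two-term Weyl law \eqref{twotermweyldir} applied to $\Omega$ controls $\lambda_k(\Omega)$ from both sides up to $\so{k^{1/d}}$. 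The key observation is that the defect from the strong P\'olya inequality in $\CR$ is, up to normalisation, $\lim_k \bigl(\lambda_k^*(\CR)^{d/2}/k - (2\pi)^d/(\omega_d\abs{\Omega}^{-1})\bigr)$, and this limit exists and equals $\inf_k$ by the subadditivity (Fekete) results invoked earlier; strong P\'olya in $\CR$ is precisely the statement that this infimum is attained in the limit with no defect, equivalently that $\lambda_k^*(\CR)^{d/2}/k \to (2\pi)^d\abs{\Omega}/\omega_d = \lambda_k(\Omega)^{d/2}\abs{\Omega}/k + \so{1}$ via the two-term law.

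For $(3) \Rightarrow (1)$: suppose $r_{1,k} \to 1$ along a subsequence $\{k_n\}$. Then $\sum_{i\ge 2} r_{i,k_n}^d = \abs{\Omega}^{-1} - r_{1,k_n}^d \to 0$, so the total volume carried by the smaller copies vanishes, and hence (since each eigenvalue index $m_i$ assigned to the $i$-th copy contributes at least $\omega_d^{-1}(2\pi)^d r_{i,k_n}^d\abs{\Omega}$-worth of counting function, up to the one-term Weyl law) the number of eigenvalues "absorbed" by the small copies is $\so{k_n}$, so $\Omega$ itself, scaled by $r_{1,k_n} \to 1$, must carry $k_n(1 + \so{1})$ of them. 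Plugging $r_{1,k_n}\to 1$ into $\lambda_{k_n}^*(\CR) \sim r_{1,k_n}^{-2}\lambda_{k_n}(\Omega)$ and using the two-term law for $\Omega$ gives $\lambda_{k_n}^*(\CR)^{d/2}/k_n \to (2\pi)^d\abs{\Omega}/\omega_d$ from above along the subsequence; but the full sequence converges (Fekete), so the limit is this value, which is exactly strong P\'olya for Dirichlet in $\CR$ — here one uses that the one-term Weyl lower bound forces $\lambda_{k_n}^*(\CR)^{d/2}/k_n \ge (2\pi)^d\abs{\Omega}/\omega_d - \so{1}$ is impossible to beat, so the limit equals the P\'olya value and \eqref{eq:strict}-type strictness propagates (this is the content of the trichotomy, Theorem~\ref{thm:trichotomy}, distinguishing cases (1)--(2) from (3)).

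For $(1) \Rightarrow (2)$: assume strong P\'olya holds in $\CR$, so $\lambda_k^*(\CR)^{d/2}/k \to (2\pi)^d\abs{\Omega}/\omega_d$. We must show $r_{1,k} \to 1$. Suppose not: then along some subsequence $r_{1,k_n} \le 1 - \delta$ for a fixed $\delta > 0$. I would argue that this forces a strictly positive defect. The point is that if the largest copy has scale bounded away from $1$, then no single copy carries a definite fraction of the spectrum in a way compatible with the two-term correction being negligible: writing $k = \sum_i N_i(k)$ where $N_i(k)$ is the count on the $i$-th copy, the one-term Weyl law gives $N_i(k) = \omega_d^{-1}(2\pi)^{-d}\lambda_{k}^*(\CR)^{d/2}r_{i,k}^d\abs{\Omega} + \bigo{\lambda_k^*(\CR)^{(d-1)/2}r_{i,k}^{d-1}}$, and summing, the leading terms reproduce $k = \omega_d^{-1}(2\pi)^{-d}\lambda_k^*(\CR)^{d/2}\abs{\Omega}^{-1}\cdot\abs{\Omega} \cdot \sum_i r_{i,k}^d/\abs{\Omega}^{-1}$ — so to leading order everything is consistent, and the defect lives entirely in the \emph{next} order term $\sum_i \bigo{(r_{i,k})^{d-1}}$, which is the surface-area-type correction. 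With the two-term law available on $\Omega$, the genuinely sharp accounting replaces $\bigo{\cdot}$ by the exact negative surface term, and $\sum_i r_{i,k}^{d-1} \ge (\sum_i r_{i,k}^d)^{(d-1)/d}\cdot(\text{something}) $ is \emph{minimised}, for fixed $\sum r_i^d$, by concentrating all mass on one copy, i.e. $r_{1,k}\to 1$; any bounded-away configuration has $\sum_i r_{i,k}^{d-1}$ bounded below by a constant strictly larger than $\abs{\Omega}^{-(d-1)/d}$, producing a strictly positive defect in $\lambda_k^*(\CR)^{d/2}/k$ and contradicting strong P\'olya. I expect \textbf{this last step — converting "scale bounded away from $1$" into a quantitative positive defect via the second Weyl term} — to be the main obstacle, because it requires a uniform two-term lower bound for $\lambda_{N_i}(\Omega)$ across the many components whose scales $r_{i,k}$ may themselves go to zero (so the error terms $\so{N_i^{1/d}}$ in \eqref{twotermweyldir} must be handled non-uniformly and summed), and one must rule out the competing gain of the Dirichlet two-term correction being \emph{positive} (boundary increases $\lambda_k$) against the loss from suboptimal scaling. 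The Neumann case is symmetric: the second Weyl term flips sign, $\mu_k(\Omega)$ decreases by the surface correction, and the same convexity-of-$r \mapsto r^{d-1}$ argument shows the maximiser wants its largest component to fill all the volume, so $r_{1,k} \to 1$ is again equivalent to strong P\'olya for $\mu$.
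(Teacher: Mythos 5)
Your overall architecture ($(1)\Rightarrow(2)\Rightarrow(3)\Rightarrow(1)$, with $(2)\Rightarrow(3)$ trivial) is legitimate and matches the paper's, and your intuition for $(1)\Rightarrow(2)$ — that splitting mass among several components inflates the surface-type second term, so the optimal configuration must concentrate on one copy — is exactly the mechanism the paper uses. But both substantive directions have gaps.

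For $(3)\Rightarrow(1)$ the gap is fatal as written. Your argument shows that $r_{1,k_n}\to 1$ forces $\lambda_{k_n}^*(\CR)^{d/2}/k_n \to (2\pi)^d/\omega_d$, which is P\'olya's conjecture (the limit statement), \emph{not} the strong P\'olya conjecture, which demands strict inequality at every finite rank. Your appeal to the trichotomy to make ``strictness propagate'' does not work: case (2) of Theorem \ref{thm:trichotomy} is precisely the scenario where P\'olya holds \emph{and} the bound $\lambda_k^*(\CR)^{d/2}=(2\pi)^dk/\omega_d$ is attained infinitely often, i.e.\ where strong P\'olya fails; an asymptotic identity cannot rule this case out. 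The paper instead argues by contrapositive: if strong P\'olya fails at some rank $j$, so that $\lambda_j(\Omega)^{d/2}\le (2\pi)^d j/\omega_d$, while $r_{1,k}\to1$ along a subsequence forces (via Lemma \ref{wk}) $\Omega$ to realise $\lambda_{j_k}^*$ for ranks $j_k\to\infty$, then the two-term Weyl law gives $\lambda_{j_k}(\Omega)^{d/2}-(2\pi)^dj_k/\omega_d\ge Cj_k^{(d-1)/d}$, whereas the explicit competitor built from $n_k$ shrunken copies of $\Omega$ (plus a small remainder component) achieves only $(2\pi)^dj_k/\omega_d+\bigo{1}$, contradicting minimality of $\Omega$ at rank $j_k$. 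This competitor construction is the essential missing idea.

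For $(1)\Rightarrow(2)$ you correctly identify, but do not resolve, the main obstacle: the two-term lower bound is only available uniformly for component ranks $j_q$ above some threshold $N$, and the components with $j_q\le N$ must be controlled separately. The paper does this by showing — and this is exactly where hypothesis (1) is used — that the strong P\'olya assumption forces the number of such small components to stay bounded, after which a two-block application of the strict concavity of $x\mapsto x^{(d-1)/d}$ (splitting the large-rank components at an index $R$ so that each block carries at least a $\delta$-fraction of $k$) yields the quantitative gain $(1+c_d\delta)$ in the coefficient of $k^{(d-1)/d}$, contradicting $\lambda_k^*(\CR)\le\lambda_k(\Omega)$. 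Without this two-step reduction your ``sum of $r_{i,k}^{d-1}$ is minimised by concentration'' heuristic cannot be summed rigorously, since the error terms $\so{j_q^{1/d}}$ in \eqref{twotermweyldir} are not uniform over the unboundedly many small components. Also note that for Dirichlet the positive sign of the second Weyl term works entirely in your favour here; there is no ``competing gain'' to rule out.
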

Comparing those equivalent statements to the trichotomy in Theorem
\ref{thm:trichotomy}, it is clear that if the strong P\'olya conjecture holds,
$\Omega = \Omega_k^*$ infinitely often. On the other hand, if $\Omega =
\Omega_k^*$ infinitely often, it is the case that $r_{1,k} \to 1$ along a
subsequence. Theorem \ref{thm:tfae} indicates that $\Omega$ satisfying a
two-term Weyl law implies that the strong P\'olya conjecture for $\CR(\Omega)$
is equivalent to weaker statements than those needed to imply P\'olya's conjecture in Theorem \ref{thm:trichotomy}. 

\subsection{Density lower bounds for Dirichlet eigenvalues}

In the paper \cite{urak}, Urakawa obtained a lower bound for Dirichlet eigenvalues
in terms of the lattice packing density of a domain $\Omega$. As an application
of our construction, we obtain in Section \ref{sec:packing} similar results for the asymptotic packing density defined as follows.

Given a set $\Omega$ and $n \in N$, we define the
\textit{$n$-th propagation of $\Omega$} as the set
\begin{equation}
 \Omega^{(n)} = \bigsqcup_{\ell = 1}^n \, \frac{1}{n^{1/d}} \,  \Omega \, .
\end{equation}

\begin{defi}\label{packing}
Given two bounded domains $\Omega$ and $V$ with volume $1$, an integer $n \in \N$
and a real number $\rho \in (0, 1]$, a \emph{packing of $\Omega^{(n)}$ into $V$
of density $\rho$} is an isometric
quasi-embedding $f : \overline{\Omega^{(n)}} \to \rho^{-1/d} V$.
Here, we call a map a \emph{quasi-embedding} if it is injective on the interior of its domain. Note furthermore that $\Omega$, and hence any element in $\CR$, is canonically equipped with a Riemannian metric. The term \emph{isometry} is to be understood as “preserving Riemannian metrics”. 

An \emph{asymptotic packing of $\Omega$ into $V$} is a triple
$P = \{(n_i, \rho_i, f_i)\}_{i \in \N}$ where $\{n_i\}_{i \in \N}$ is a 
strictly increasing sequence of integers, $\{\rho_i\}_{i \in \N} \subset (0, 1]$
converges to the \emph{asymptotic density} $\rho_P \in (0, 1]$ and each $f_i$ is
a packing of $\Omega^{(n_i)}$ into $V$ of density $\rho_i$.

The \emph{packing number or packing density of $\Omega$ into $V$} is
\begin{equation} 
\rho_{\Omega, V} \, = \, \sup \, \{ \, \rho_P \, | \, \mbox{ $P$ is an asymptotic packing of $\Omega$ into $V$ } \, \} \; .
\end{equation}
The \emph{packing number or packing density of $\Omega$} is
\begin{equation} 
\rho_{\Omega} \, = \, \sup \, \{ \, \rho_{\Omega, V} \, | \, \mbox{ $V$ is a bounded domain with volume $1$ } \, \} \;  .
\end{equation}

\begin{defi}\label{tile}
A domain $D \subset \R^d$ is a \emph{tile} or is said \emph{to tile $\R^d$} if there is an isometric quasi-embedding $F : \sqcup_{i \in \N} \bar{D} \to \R^d$, called the \emph{tiling}, which is surjective.
\end{defi}\vspace{6pt}

\end{defi}\vspace{6pt}
\begin{rem}
 The lattice packing density of Urakawa \cite{urak} is always smaller or equal
 to this packing density, as it is equivalent to considering only $V$ that are
 parallelepipeds, as well as having $P$ constrained more strictly. It is not
 hard to find examples of concave, simply connected domains that have a higher
 asymptotic packing density than their lattice packing density.
\end{rem}
We obtain the following theorem for a lower bound on Dirichlet eigenvalues in
terms of this asymptotic density.

\begin{thm} \label{thm:packing}
  For every $\Omega \subset \R^d$ open and bounded, with $\abs \Omega = 1$, the
  lower bound
  \begin{equation}
  \inf_k \frac{\lambda_k^*(\CR(\Omega))^{d/2}}{k} \ge \rho_\Omega
  \frac{(2\pi)^d}{\omega_d}
  \end{equation}
  holds.
\end{thm}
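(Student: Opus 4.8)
My plan is to reduce the statement to a single Weyl-type upper bound on the Dirichlet counting function of the generator, and to establish that bound by running P\'olya's classical tiling argument with the tiling replaced by a near-optimal asymptotic packing of $\Omega$ into an auxiliary domain $V$. Throughout, write $N_\Upsilon(\Lambda):=\#\{\,j\ge 1:\lambda_j(\Upsilon)\le\Lambda\,\}$ for the Dirichlet eigenvalue counting function. Since the spectrum of a disjoint union is the union of the spectra of its components and $\lambda_j(r\Omega)=r^{-2}\lambda_j(\Omega)$, every $\Upsilon=\bigsqcup_i r_i\Omega\in\CR(\Omega)$ satisfies $N_\Upsilon(\Lambda)=\sum_i N_\Omega(r_i^2\Lambda)$, a sum with only finitely many nonzero terms for each $\Lambda$ because $\sum_i r_i^d=|\Upsilon|<\infty$. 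Hence it suffices to prove
\begin{equation}
 N_\Omega(\Lambda)\ \le\ \frac{\omega_d}{(2\pi)^d\,\rho_\Omega}\,\Lambda^{d/2}\qquad\text{for all }\Lambda>0;\tag{$\star$}
\end{equation}
indeed, granting $(\star)$ one gets $N_\Upsilon(\Lambda)\le\frac{\omega_d}{(2\pi)^d\rho_\Omega}\Lambda^{d/2}|\Upsilon|\le\frac{\omega_d}{(2\pi)^d\rho_\Omega}\Lambda^{d/2}$ for every $\Upsilon\in\CR(\Omega)$ with $|\Upsilon|\le 1$, and evaluating this at $\Lambda=\lambda_k(\Upsilon)$ together with $N_\Upsilon(\lambda_k(\Upsilon))\ge k$ yields $\lambda_k(\Upsilon)^{d/2}\ge\rho_\Omega\frac{(2\pi)^d}{\omega_d}k$; taking the infimum over such $\Upsilon$, then over $k$, gives the theorem. (Alternatively one could first pass to $\lim_{k\to\infty}\lambda_k^*(\CR(\Omega))^{d/2}/k$ via Theorem~\ref{subadditivity} and Corollary~\ref{BLY}, but the route above delivers the bound for every $k$ at once.)

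To prove $(\star)$, I would fix $\eps>0$ and, using the definition of $\rho_\Omega$ as a supremum, choose a bounded domain $V$ with $|V|=1$ and an asymptotic packing $P=\{(n_i,\rho_i,f_i)\}_{i\in\N}$ of $\Omega$ into $V$ with asymptotic density $\rho_P>\rho_\Omega-\eps$; thus each $f_i\colon\overline{\Omega^{(n_i)}}\to\rho_i^{-1/d}V$ is an isometric quasi-embedding. Restricting $f_i$ to the interior of $\Omega^{(n_i)}$, where it is injective and a local isometry, identifies that interior with an open subset of the Euclidean domain $\rho_i^{-1/d}V$; Dirichlet domain monotonicity then gives $\lambda_k(\rho_i^{-1/d}V)\le\lambda_k(\Omega^{(n_i)})$ for all $k$, i.e. $N_{\Omega^{(n_i)}}(\Lambda)\le N_V(\rho_i^{-2/d}\Lambda)$. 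Using $\Omega^{(n_i)}=\bigsqcup_{\ell=1}^{n_i}n_i^{-1/d}\Omega$ and scaling, $N_{\Omega^{(n_i)}}(\Lambda)=n_i\,N_\Omega(n_i^{-2/d}\Lambda)$, so for fixed $\Lambda_0>0$, substituting $\Lambda=n_i^{2/d}\Lambda_0$ gives
\[
 n_i\,N_\Omega(\Lambda_0)\ \le\ N_V\!\bigl(n_i^{2/d}\rho_i^{-2/d}\Lambda_0\bigr).
\]
By the leading-order Weyl law for the bounded set $V$ of volume $1$, $N_V(M)=\frac{\omega_d}{(2\pi)^d}M^{d/2}+o(M^{d/2})$ as $M\to\infty$; since $\rho_i\to\rho_P>0$, the arguments $M_i:=n_i^{2/d}\rho_i^{-2/d}\Lambda_0$ tend to infinity with $M_i^{d/2}/n_i=\rho_i^{-1}\Lambda_0^{d/2}$ bounded, so dividing the last display by $n_i$ and letting $i\to\infty$ yields $N_\Omega(\Lambda_0)\le\frac{\omega_d}{(2\pi)^d\rho_P}\Lambda_0^{d/2}\le\frac{\omega_d}{(2\pi)^d(\rho_\Omega-\eps)}\Lambda_0^{d/2}$. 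Letting $\eps\to0$ proves $(\star)$. Note that the asymptotic ($n_i\to\infty$) nature of the packing is used precisely here, to make the Weyl remainder for $V$ negligible after division by $n_i$.

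I expect the only step requiring genuine care --- and the main obstacle --- to be the passage from the isometric quasi-embedding $f_i$ to the monotonicity inequality $N_{\Omega^{(n_i)}}(\Lambda)\le N_V(\rho_i^{-2/d}\Lambda)$: one must check that a metric-preserving map which is merely injective on interiors really identifies (the interior of) $\Omega^{(n_i)}$ with an honest open subdomain of $\rho_i^{-1/d}V$ to which the standard Dirichlet domain-monotonicity of eigenvalues applies, even though the images of distinct connected components of $\Omega^{(n_i)}$, and of their boundaries, may touch. A secondary point is that $V$ is an arbitrary bounded domain, so one should cite the leading-order Weyl law --- or merely its upper half, $\limsup_{M\to\infty}N_V(M)/M^{d/2}\le\omega_d/(2\pi)^d$ --- in that generality; this is classical (e.g. via Dirichlet--Neumann bracketing). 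Everything else is elementary scaling together with the additivity of counting functions under disjoint unions.
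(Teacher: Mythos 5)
Your proof is correct, and it reaches the conclusion by a genuinely different route than the paper. The paper's proof splits according to the trichotomy: if the generator realises $\lambda_k^*$ infinitely often it invokes Proposition \ref{Infinite}, and otherwise it uses the structure theory of minimisers (Lemma \ref{existence_min}, the Wolf--Keller decomposition of Lemma \ref{wk}, and Proposition \ref{Minimum}) to produce a rank $j$ with $\lambda_j(\Omega)^{d/2}j^{-1}=L$, and only then packs the propagations $\Omega^{(n_i)}$ into a near-optimal $V$ and applies Dirichlet monotonicity plus Weyl's law for $V$ at the ranks $n_i j$ (Lemma \ref{lowerpacking}). You keep exactly that analytic core --- isometric quasi-embedding, domain monotonicity, and the one-term Weyl law for an arbitrary bounded $V$ (which, as in the paper, must be cited in the generality of Birman--Solomyak/Rozenblum, since $\partial V$ is not assumed regular) --- but you apply it at the level of counting functions to derive the uniform bound $N(\Lambda;\Omega)\le \omega_d\,\rho_\Omega^{-1}(2\pi)^{-d}\Lambda^{d/2}$ for all $\Lambda$, and then propagate it to every $\Upsilon\in\CR(\Omega)$ by pure additivity and scaling of counting functions. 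This bypasses the existence and structure results for extremisers entirely and yields something strictly stronger than the stated theorem, namely $\lambda_k^*(\CR(\Omega))^{d/2}\ge \rho_\Omega (2\pi)^d\omega_d^{-1}k$ for \emph{every} $k$ rather than only for the infimum; the price is that your argument is specific to the Dirichlet problem's monotonicity and additivity, whereas the paper's detour through the trichotomy is what lets it place this bound inside its general framework for both boundary conditions. The one step you flag --- that a metric-preserving map injective on interiors identifies $\Omega^{(n_i)}$ with an open subset of $\rho_i^{-1/d}V$ --- is handled correctly: on each connected component a Riemannian isometry between flat Euclidean domains is the restriction of a rigid motion, the images of distinct components are disjoint open sets by injectivity, and extension by zero gives the monotonicity of the counting function; this is precisely the step the paper also takes for granted in Lemma \ref{lowerpacking}.
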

Obviously, the previous Theorem allows us to recover P\'olya's theorem as a
corollary.
\begin{cor}[P\'olya \cite{poly}] \label{ThmPolya}
If $\Omega$ tiles $\R^d$, then P\'olya's conjecture holds for any domain in $\CR(\Omega)$.
\end{cor}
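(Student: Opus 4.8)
The plan is to deduce Corollary \ref{ThmPolya} directly from Theorem \ref{thm:packing} together with the trichotomy of Theorem \ref{thm:trichotomy}. First I would observe that if $\Omega$ tiles $\R^d$ in the sense of Definition \ref{tile}, then its asymptotic packing density is maximal, namely $\rho_\Omega = 1$. Indeed, given an isometric quasi-embedding $F : \sqcup_{i \in \N} \bar\Omega \to \R^d$ which is surjective (after rescaling so $\abs\Omega = 1$, which does not affect the tiling property up to homothety), one can exhaust $\R^d$ by large balls and, for each $n$, select $n$ copies of $\Omega$ whose images lie inside a ball of volume close to $n$; rescaling that ball to have volume $1$ produces a packing of $\Omega^{(n)}$ into the unit ball $V = B$ of density $\rho_n \to 1$. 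This gives an asymptotic packing with $\rho_P = 1$, hence $\rho_{\Omega,B} = 1$ and a fortiori $\rho_\Omega = 1$.

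Next I would feed this into Theorem \ref{thm:packing}: with $\rho_\Omega = 1$ we get
\begin{equation}
  \inf_k \frac{\lambda_k^*(\CR(\Omega))^{d/2}}{k} \ge \frac{(2\pi)^d}{\omega_d}.
\end{equation}
On the other hand, the trivial test domain $\Omega^{(k)} = \bigsqcup_{\ell=1}^k k^{-1/d}\Omega$ has volume $1$ and, since the generator $\Omega$ appears scaled by $k^{-1/d}$, its eigenvalues satisfy $\lambda_k(\Omega^{(k)}) = k^{2/d}\lambda_k(\Omega) / \ldots$; more to the point, the Weyl-type counting argument (or simply the subadditivity of Theorem \ref{subadditivity}, i.e. the Colbois--El Soufi inequality $\lambda_{k}^*{}^{d/2}$ subadditive) forces $\lambda_k^*(\CR(\Omega))^{d/2}/k \le (2\pi)^d/\omega_d$ for at least some, in fact infinitely many, $k$ — this is precisely the content of Corollary \ref{BLY} combined with \eqref{blyineq}. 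Therefore $\inf_k \lambda_k^*(\CR)^{d/2}/k = (2\pi)^d/\omega_d$. Comparing with the trichotomy of Theorem \ref{thm:trichotomy}: the value $\inf_k \lambda_k^*(\CR)^{d/2}/k = (2\pi)^d/\omega_d$ rules out case (3), whose hallmark is that P\'olya fails and the infimum is strictly below $(2\pi)^d/\omega_d$; hence we are in case (1) or (2), and in both of those P\'olya's conjecture for Dirichlet eigenvalues holds in $\CR(\Omega)$. The Neumann statement follows by the parallel trichotomy, once one notes that the packing lower bound together with the superadditivity of $\mu_k^*{}^{d/2}$ pins the limit of $\mu_k^*(\CR)^{d/2}/k$ to the same constant from above rather than below.

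The main obstacle I expect is the first step: verifying carefully that a tiling in the combinatorial/measure-theoretic sense of Definition \ref{tile} really does yield asymptotic packings of density tending to $1$ into a genuine bounded unit-volume domain $V$, since one must control the "boundary layer" of tiles that stick out of any fixed large region. The clean way to handle this is to take $V$ to be a cube (or ball), use the surjectivity of $F$ to cover a large cube $Q_R$ of side $R$ up to a set of measure zero by translated/rotated copies of $\bar\Omega$, discard the finitely many copies meeting $\partial Q_R$, and estimate that the discarded volume is $O(R^{d-1})$ while the retained volume is $R^d - O(R^{d-1})$; dividing by $R^d$ and letting $R \to \infty$ gives $\rho_n \to 1$ after the obvious reindexing by $n = \#\{\text{retained copies}\}$. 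Everything else is a matter of assembling Theorems \ref{thm:packing} and \ref{thm:trichotomy}, which we are entitled to invoke.
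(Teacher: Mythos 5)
Your Dirichlet argument is essentially the paper's: the paper likewise first establishes $\rho_\Omega = 1$ for tiling domains (its Proposition \ref{tiling}, whose proof is exactly your boundary-layer estimate, with the hypothesis that $\partial V$ has upper Minkowski dimension $<d$ guaranteeing that the discarded layer of tiles meeting $\partial(nV)$ has volume $\so{n^d}$) and then invokes Theorem \ref{thm:packing}. Two remarks. First, your detour through Theorem \ref{thm:trichotomy} is superfluous: the conclusion $\inf_k \lambda_k^*(\CR)^{d/2}k^{-1} \ge (2\pi)^d/\omega_d$ of Theorem \ref{thm:packing} \emph{is} already the Dirichlet P\'olya inequality for every unit-volume $\Upsilon \in \CR$, by the definition of $\lambda_k^*(\CR)$ as an infimum; no matching upper bound on the infimum and no case analysis are needed (and your citation of \eqref{blyineq} for that upper bound is misplaced, since Berezin--Li--Yau gives a bound in the opposite direction --- the upper bound would instead come from $\lambda_k^*(\CR) \le \lambda_k(\Omega)$ and Weyl's law, as in Proposition \ref{Minimum}). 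Second, your closing sentence on the Neumann case is a genuine gap as written: there is no ``packing lower bound'' for Neumann eigenvalues, and the paper explicitly stresses that Theorem \ref{thm:packing} and modifications of it fail in the Neumann setting because Neumann eigenvalues are not monotone under inclusion; the Neumann analogue (Theorem \ref{polya_stile}) requires the stronger hypothesis of a simple tile together with Corollary \ref{cor:neumon}. That said, the paper's own one-line proof of this corollary also only invokes the Dirichlet packing bound, so on the Dirichlet half --- which is what P\'olya's original theorem asserts --- your route is the intended one.
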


\begin{proof}
  If $\Omega$ tiles $\R^d$, then $\rho_{\Omega} =1$ (see Proposition
  \ref{tiling}). Then Theorem \ref{thm:packing} implies the result.
\end{proof} \vspace{6pt}

We also obtain the following strengthening of P\'olya's theorem for domains that
are said to simply tile $\R^d$ and for which the two-term Weyl law
\eqref{twotermweyldir} holds, in which case the strong P\'olya conjecture holds.

\begin{defi}\label{stile}
A domain $\Omega$ is a \emph{simple tile} or is said \emph{to simply tile
$\R^d$} if there is a domain $V \subset \R^d$, called a fundamental domain, with volume $1$ and an asymptotic
packing $P = \{(n_i, 1, f_i)\}_{i \in \N}$ of $\Omega$ into $V$ with constant packing density $1$.
\end{defi}\vspace{6pt}

\begin{thm} \label{polya_stile}
If $\Omega$ simply tiles $\R^d$ with fundamental domain $V$ satisfying the two-term Weyl law
\eqref{twotermweyldir}, then $\Omega$ realises $\lambda_k^*(\CR(\Omega))$
infinitely often and satisfies the strong P\'olya conjecture. The same holds
for Neumann eigenvalues, if $V$ satisfies \eqref{twotermweylneu} instead.
\end{thm}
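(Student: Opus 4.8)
The plan is to combine Theorem \ref{thm:packing} with the two-term Weyl asymptotics of the fundamental domain $V$, using the simple tiling structure to build near-optimal competitors directly from $V$'s spectrum. First I would observe that simple tiling of $\Omega$ into $V$ means that, for the sequence $n_i$, the propagation $\Omega^{(n_i)}$ can be isometrically quasi-embedded with full density into $V$; since eigenvalues are monotone under such domain inclusions (the Dirichlet eigenvalues of a subdomain dominate those of the ambient domain, and Neumann eigenvalues behave oppositely via the appropriate inclusion direction), one gets for each $i$ that $\lambda_k(\Omega^{(n_i)}) \ge \lambda_k(V)$ for all $k$, equivalently $\lambda_k\bigl(\tfrac{1}{n_i^{1/d}}\Omega\bigr)$-based quantities control $\lambda_k(V)$. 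Rescaling, $\Omega^{(n_i)} \in \CR(\Omega)$ has volume $1$ and satisfies $\lambda_k(\Omega^{(n_i)}) \ge \lambda_k(V)$, so taking $i \to \infty$ (with $n_i \to \infty$) and using that $V$ satisfies \eqref{twotermweyldir}, we get, for each fixed $k$, that $\lambda_k^*(\CR(\Omega)) \le \lambda_k(\Omega^{(n_i)})$ for all $i$, hence $\lambda_k^*(\CR(\Omega)) \le \lambda_k(V)$.

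Next I would run the reverse inequality. Theorem \ref{thm:packing} gives $\inf_k \lambda_k^*(\CR(\Omega))^{d/2}/k \ge \rho_\Omega (2\pi)^d/\omega_d = (2\pi)^d/\omega_d$ since a simple tile has $\rho_\Omega = 1$ (this is the analogue of Proposition \ref{tiling}; it follows because the constant-density-$1$ asymptotic packing witnesses $\rho_{\Omega,V} = 1$). Combined with the general Weyl lower bound for $\CR$ families — or more directly with Corollary \ref{BLY}, which reduces universal bounds within $\CR$ to the limit of the optimal sequence — this pins down $\lim_{k} \lambda_k^*(\CR(\Omega))^{d/2}/k = (2\pi)^d/\omega_d$, which by the Colbois–El Soufi-type characterisation recalled in the excerpt is exactly P\'olya's conjecture holding in $\CR(\Omega)$. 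To upgrade to the strong form and to the "realises infinitely often" statement, I would invoke Theorem \ref{thm:tfae}: it suffices to show $r_{1,k} \to 1$ along a subsequence. But from the first paragraph, $\Omega^{(n_i)}$ is a competitor with largest coefficient $r_{1} = n_i^{-1/d} \to 0$, which is the wrong direction — so instead I would use the chain $\lambda_k^*(\CR(\Omega)) \le \lambda_k(V)$ together with the two-term lower Weyl bound \eqref{twotermweyldir} for $V$ to show that the optimal value actually sits strictly below the one-term Weyl quantity $\tfrac{4\pi^2}{(\omega_d)^{2/d}} k^{2/d}$ for all large $k$, because the positive perimeter correction term in \eqref{twotermweyldir} for $V$ is of lower order but has the right sign; wait — that gives $\lambda_k(V)$ \emph{above} the one-term term, so this bound alone is not enough, and the genuine content must come from comparing $\lambda_k(V)$ to the one-term Weyl term for $\Omega^{(n_i)}$ at a scale where the tiling is exact.

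The resolution, and the key step, is this: because the packing has density exactly $1$, the quasi-embedding $f_i : \overline{\Omega^{(n_i)}} \to V$ is (up to boundary) a bijection, so $\Omega^{(n_i)}$ and $V$ have the \emph{same} eigenvalue counting function up to the discrepancy caused by the internal boundaries introduced by cutting $V$ into $n_i$ copies of $n_i^{-1/d}\Omega$. Dirichlet-bracketing across these internal walls gives $\lambda_k(\Omega^{(n_i)}) \ge \lambda_k(V)$ with equality-type control, and the number of such walls, hence the defect in the Weyl count, is $O(n_i \cdot (n_i^{-1/d})^{d-1}) = O(n_i^{1/d})$ per unit spectral window — the same order as the two-term correction. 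Taking $n_i \to \infty$ first and then reading off the constant, one finds that the obstruction to the strong P\'olya conjecture disappears precisely because $\Omega^{(n_i)}$ realises the optimum arbitrarily well while its own two-term Weyl expansion inherits $V$'s \emph{negative}-sign sub-leading behaviour after accounting for the cut boundaries, exactly as in P\'olya's original tiling argument sharpened by the two-term law. Concretely I would: (i) fix $k$, choose $n_i$ with $n_i^{-1/d}$ small, embed $\Omega^{(n_i)}$ in $V$; (ii) apply the two-term law for $V$ to estimate $\lambda_k(V)$ and hence $\lambda_k^*(\CR(\Omega))$ from above; (iii) show the resulting bound is $< \tfrac{4\pi^2}{(\omega_d)^{2/d}} k^{2/d}$ for $k$ large by a careful comparison of the $k^{2/d}$ and $k^{1/d}$ coefficients through the scaling $r_{1,k}$, equivalently by checking that the minimiser sequence forces $r_{1,k} \to 1$ along a subsequence (using Theorem \ref{smallo} on the number of components), then conclude via Theorem \ref{thm:tfae}.

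I expect the main obstacle to be step (iii): making rigorous that simple tiling plus the two-term law for $V$ forces the scaling coefficient $r_{1,k}$ of the true $\CR(\Omega)$-minimiser (not of the ad hoc competitor $\Omega^{(n_i)}$) to approach $1$ along a subsequence. The competitor $\Omega^{(n_i)}$ shows $\lambda_k^*(\CR(\Omega))$ is small, but one must then argue that \emph{any} near-optimal domain with many tiny components cannot beat the one-term Weyl value once the perimeter cost of those components is paid — this is where the negative-sign correction in \eqref{twotermweyldir} applied to $V$, transported through the tiling, does the work, and where the interplay with Theorem \ref{smallo} (slower-than-linear component growth $\Rightarrow$ $\Omega$ realises the optimum infinitely often) closes the loop. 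Once "$\Omega$ realises $\lambda_k^*(\CR(\Omega))$ infinitely often" is established, the strong P\'olya conjecture follows from Theorem \ref{thm:tfae} combined with the strict inequality \eqref{eq:strict} for $\Omega$ itself at those infinitely many $k$.
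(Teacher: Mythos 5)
There are genuine gaps here, both in the direction of the key inequality and in how you try to reach the strong form of the conjecture. In your first paragraph you deduce ``$\lambda_k^*(\CR(\Omega)) \le \lambda_k(\Omega^{(n_i)})$ and $\lambda_k(\Omega^{(n_i)}) \ge \lambda_k(V)$, hence $\lambda_k^*(\CR(\Omega)) \le \lambda_k(V)$'' --- this is a non sequitur: the two inequalities face opposite ways and yield no comparison between $\lambda_k^*$ and $\lambda_k(V)$ (and $V \notin \CR(\Omega)$ in general, so no such bound should be expected). The monotonicity inequality $\lambda_{n_i j}(\Omega^{(n_i)}) \ge \lambda_{n_i j}(V)$ is in fact the whole point, but it must be used as a \emph{lower} bound: for any rank $j$ at which $\Omega$ realises $\lambda_j^*$, one has
\begin{equation}
\frac{\lambda_j(\Omega)^{d/2}}{j} = \frac{\lambda_{n_i j}(\Omega^{(n_i)})^{d/2}}{n_i j} \ge \frac{\lambda_{n_i j}(V)^{d/2}}{n_i j} > \frac{(2\pi)^d}{\omega_d}
\end{equation}
for $i$ large, the strict inequality coming from the two-term Weyl law \eqref{twotermweyldir} for $V$. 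Combined with Proposition \ref{Minimum} (if $J$ were finite, the minimum over $J$ of $\lambda_j(\Omega)^{d/2}/j$ would be $\le (2\pi)^d/\omega_d$), this forces $J$ to be infinite. You never close this step: your appeal to Theorem \ref{smallo} and to ``wall-counting / Dirichlet bracketing'' estimates does not substitute for it, and you yourself flag step (iii) as unresolved.

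The second gap is in the upgrade to the strong conjecture. Both Theorem \ref{thm:tfae} and inequality \eqref{eq:strict} require the two-term Weyl law for the \emph{generator} $\Omega$, whereas Theorem \ref{polya_stile} only assumes it for the fundamental domain $V$; so neither is available to you. Moreover, ``$\Omega$ realises $\lambda_k^*$ infinitely often'' only yields the weak conjecture via Proposition \ref{Infinite}. The paper instead observes that the displayed strict inequality shows $L = (2\pi)^d/\omega_d$ is not attained at any $j \in J$, and then uses the propagation machinery (Lemma \ref{wk} to decompose an alleged minimiser attaining $L$ into copies of $\Omega$, together with Proposition \ref{PropPropagation} and Lemma \ref{propag=weakpropag}) to conclude that $L$ is not attained by \emph{any} domain in $\CR(\Omega)$ --- which is exactly the strong P\'olya conjecture. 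Your proposal contains no replacement for this argument. (Your second paragraph, deriving the weak conjecture from Theorem \ref{thm:packing} and $\rho_\Omega = 1$, is correct but is only the content of Corollary \ref{ThmPolya}.)
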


 \subsection{Computational results}
In Section \ref{sec:comp}, we investigate numerically the set of extremisers for
Dirichlet eigenvalues within
families $\CR$ generated by the disk, the square, and a rectangle with aspect
ratio $5$. We chose these domains to see if the markers for the
P\'olya conjecture differed between the rectangles, for which the conjecture is
known to hold, and the disk, for which it's not. In all four cases, we look for
extremisers up to eigenvalue rank 66 000. 

We investigate the number of connected components of the extremising set, in view of Theorem \ref{smallo}. In
all the cases we are studying, we see that this number is bounded by $5$, up to
rank 66 000. Recall that for P\'olya's conjecture to hold, we only need for a
subsequence of the extremisers to have their number of connected components
bounded.

We also investigate the asymptotic log-density of the number of times the generator
can be $\Omega_k^*$. For a set $J \subset \N$, we define its counting function
as
\begin{equation}
  N_J(x) := \#\set{j \in J : j \le x}
\end{equation}
and its log-density as
\begin{equation}\label{eq:logdensity}
F_J(x) := \frac{\log(N_J(x))}{\log x}.
\end{equation}
We have that for every $\eps > 0$,
\begin{equation}
  \lim_{x\to \infty} F_J(x) = \alpha > 0 \quad \Leftrightarrow \quad N_J(x) \ge
  x^{\alpha - \eps}
\end{equation}
for $x$ large enough. In particular, for $J$ the set of ranks $k$ for which the
generator realises $\lambda_k^*$, $\lim_{x \to \infty} F_J(x) = \alpha > 0$
implies that the cardinality of $J$ is infinite. The log-density in all cases we
investigated seemed to converge quite quickly to a constant greater than $0.8$,
albeit not the same constant for the disk and the various rectangles. It would
be an interesting line of investigation to understand the geometric properties
that influence the value of this constant. 

\subsection*{Acknowledgements}
We would like to thank Iosif Polterovich for useful discussions. P.F. was partially supported by the Funda\c c\~{a}o para a Ci\^{e}ncia
e a Tecnologia (Portugal) through project PTDC/MAT-CAL/4334/2014. J.L. was partially supported by ESPRC grant
EP/P024793/1 and NSERC postdoctoral fellowship. J.P. was partially supported by
the NSERC Alexander-Graham-Bell scholarship.

\section{Eigenvalue optimisation within a family} \label{sec:suitable}

Recall that for $\Omega_1,\dotsc,\Omega_n$, each of volume 1, we investigate the family of domains
\begin{equation}
  \CR(\Omega_1,\dotsc,\Omega_n) := \set{\,\bigsqcup_{i \in I} r_i \Omega_{n_i} \, : \,  I \text{ countable}, n_i \in \set{1,\dotsc,n}, \sum_{i \in I}r_i^d < \infty \, }.
\end{equation}

Our first two results concern the existence of eigenvalue extremisers in this restricted collection $\CR$. \vspace{0pt}

\begin{lem} \label{existence_min}
 For all $k$, there exists a domain $\Omega_k^* \in \CR$ of volume $1$ such that
 \begin{equation}
  \lambda_k(\Omega_k^*) = \lambda_k^*(\CR)
 \end{equation}
 and for any minimising domain $\nu(\Omega_k^*) \le k$.  
\end{lem}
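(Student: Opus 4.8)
The plan is to reduce the optimisation over the full family $\CR$, whose elements may have infinitely many connected components with arbitrarily small radii, to a compact problem over a finite-dimensional parameter space. The key observation is a monotonicity/domain-splitting fact: for $\Upsilon = \bigsqcup_{i \in I} r_i\Omega$, the spectrum of $\Upsilon$ is the union (with multiplicity) of the spectra of the pieces $r_i\Omega$, so $\lambda_k(\Upsilon)$ is the $k$-th smallest element of $\bigcup_i \{\lambda_j(\Omega)/r_i^2 : j \ge 1\}$. In particular, $\lambda_k(\Upsilon)$ depends only on the multiset of radii $\{r_i\}$, and only on the $k$ largest of them — shrinking or deleting any component beyond the $k$-th largest can only produce eigenvalues above $\lambda_k$ and hence never affects $\lambda_k(\Upsilon)$. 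This immediately gives the bound $\nu(\Omega_k^*) \le k$ once existence is established: if a minimiser had more than $k$ components, one could delete all but the $k$ largest, which does not increase $\lambda_k$ and frees up volume that can be redistributed (by homothety) to push $\lambda_k$ strictly down, contradicting optimality — or at worst one obtains another minimiser with at most $k$ components.

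For existence, first I would show the infimum $\lambda_k^*(\CR)$ is approached by domains with at most $k$ components of total volume exactly $1$. Given a minimising sequence $\Upsilon^{(m)}$, replace each $\Upsilon^{(m)}$ by the disjoint union of its $k$ largest components rescaled to have total volume $1$; by the previous paragraph $\lambda_k$ does not increase under this operation, so we still have a minimising sequence, now parametrised by $(r_1^{(m)},\dotsc,r_k^{(m)})$ in the compact simplex $\{r_i \ge 0, \sum r_i^d = 1\}$ (allowing $r_i = 0$ to account for fewer components). Passing to a convergent subsequence $r_i^{(m)} \to \bar r_i$, I would argue that $\lambda_k$ of the limiting configuration equals the limit of the $\lambda_k(\Upsilon^{(m)})$: the eigenvalues $\lambda_j(\Omega)/(r_i^{(m)})^2$ vary continuously in the $r_i$ where $r_i > 0$, and any component whose radius degenerates to $0$ sends all its eigenvalues to $+\infty$, so it drops out of the bottom of the spectrum — hence the $k$-th smallest eigenvalue is continuous at the limit. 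This shows the limiting domain $\Omega_k^*$ (the disjoint union of the $r_i\Omega$ with $\bar r_i > 0$) realises $\lambda_k^*(\CR)$ and has at most $k$ components; rescaling to volume $1$ is harmless.

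The main obstacle is the continuity argument at the boundary of the simplex, i.e.\ ruling out that mass escapes in a way that lowers $\lambda_k$ below what the finite configuration sees. The delicate point is to check that no eigenvalue can "sneak in" from a vanishing component: a component with $r_i^{(m)} \to 0$ has smallest eigenvalue $\lambda_1(\Omega)/(r_i^{(m)})^2 \to \infty$, so for $m$ large it contributes nothing to the first $k$ eigenvalues, which is exactly what makes the reduction to at most $k$ components safe in the first place and makes $\lambda_k$ lower semicontinuous along the sequence; combined with upper semicontinuity (the limiting configuration is an admissible competitor, up to the volume normalisation) one gets the desired equality. One should also take a moment to confirm that the limiting radii are not \emph{all} zero — impossible since $\sum (r_i^{(m)})^d = 1$ forces at least one $\bar r_i$ to be positive — and that discreteness of the spectrum of each $r_i\Omega$ (inherited from $\Omega$) is all that is used, so no boundary regularity of $\Omega$ is needed in the Dirichlet case.
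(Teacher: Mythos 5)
Your proposal is correct and follows essentially the same route as the paper's proof: truncate to at most $k$ components using the fact that the spectrum of a disjoint union is the union of the spectra (so at most $k$ components can carry the first $k$ eigenvalues), then extract a limit by compactness of the radii, with degenerating components dropping out of the bottom of the spectrum since their first eigenvalue blows up like $r_i^{-2}$. The only adjustment needed is that $\CR$ may have several generators $\Omega_1,\dotsc,\Omega_n$, so components should be ordered by $\lambda_1(r_i\Omega_{n_i})$ rather than by radius, and one passes to a further subsequence along which the (finitely many possible) generator labels are constant — a routine modification that the paper carries out explicitly.
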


\begin{proof}
 Fix $k \ge 1$. For any $j \in \N \cup \set \infty$, denote
 \begin{equation}
  \lambda_k^{(j)} = \inf \set{\lambda_k(\Upsilon) \,  : \, \Upsilon \in \CR, \, |\Upsilon| \le 1, \,  \nu(\Upsilon) = j}.
 \end{equation}
 
\noindent Of course, $\lambda_k^*(\CR) = \inf_{j} \,  \lambda_k^{(j)}$.

Our first step is to show that if $j > k$, then $\lambda_k^{(j)} \ge \lambda_k^{(l)}$ for some $l \le k$; It follows in particular that the previous infimum is a minimum.

The argument for this first step will follow the proof of \cite[Lemma 8]{BI}.
Indeed, consider $\Upsilon = \bigsqcup_{i \in I} r_i \Omega_{n_i} \in \CR$ with
$|\Upsilon| =1$ and $\nu(\Upsilon) = j$. Suppose without loss of generality that
\begin{equation} 
  \lambda_1(r_j\Omega_{n_j}) \le \lambda_1(r_{j'}\Omega_{n_{j'}})
  \text{ whenever } j \le j'
\end{equation}
Let
\begin{equation}
  l = \min\big\{k, \max \set{m : \lambda_1(r_m \Omega_{n_m}) \le \lambda_k(\Upsilon)}\big\} \le k,
\end{equation}
and
\begin{equation}
  \tilde \Upsilon = r_1 \Omega_{n_1} \sqcup \dotso \sqcup r_l \Omega_{n_l}.
\end{equation}
Note that if $\nu(\Upsilon) = \infty$, $m$ is still finite since $r_j \to 0$ as
$j \to \infty$, and observe that $\lambda_k(\tilde \Upsilon) \le \lambda_k(\Upsilon)$. Since
$|\tilde \Upsilon| \le 1$, we can dilate it to a set $\hat{\Upsilon}$ of volume $1$ whose eigenvalues are all smaller than the ones of $\tilde{\Upsilon}$, so that $\lambda_k(\hat{\Upsilon}) \le \lambda_k(\Upsilon)$. Taking the infimum of this inequality over all appropriate sets $\Upsilon$ and recalling that $\nu(\hat{\Upsilon}) = l \le k < j = \nu(\Upsilon)$, we get indeed
\begin{equation}
 \lambda_k^{(l)} \le \lambda_k^{(j)}.
\end{equation}
We therefore deduce that
\begin{equation}
 \lambda_k^* = \min_{1 \le j \le k} \lambda_k^{(j)}.
\end{equation}\vspace{0pt}

Our second step is to show that for every $1 \le j \le k$, either there exists a minimiser $\Upsilon^{(j)} \in \CR$ for $\lambda_k^{(j)}$ or $\lambda_k^{(j)} \ge \lambda_k^{(j-1)}$.

The statement is obvious for $\lambda_k^{(1)}$, as there is only a finite number
of set, namely $\Omega_1,\dotsc,\Omega_n$ to verify. For $j > 1$, consider a minimising sequence 
\begin{equation}
  \Upsilon^{(j)}_p = \bigsqcup_{i = 1}^j r_{i,p} \Omega_{n_{i,p}}
 \end{equation}
 of sets in $\CR$ which can all be taken to have volume $1$, \textit{i.e.}
\begin{equation}
 \lambda_k^{(j)} = \lim_{p \to \infty} \,  \lambda_k(\Upsilon_p^{(j)}) \, .
\end{equation}
Assume without loss of generality $1 > r_{1,p} \ge \dots \ge r_{j,p}$ for each
$p$. If $r_{j,p} \to 0$ as $p \to \infty$, then for $p$ large enough,
$\lambda_1(r_{j,p}\Omega_{n_{j,p}}) \ge \lambda_k^{(j)}$. This implies that
$\lambda_k(\Upsilon_p^{(j)} \setminus r_{j,p} \Omega_{n_{j,p}}) =
\lambda_k(\Upsilon_p^{(j)})$ but $\nu(\Upsilon_p^{(j)} \setminus r_{j,p} \Omega) = j
- 1$, hence $\lambda_k^{(j)} \ge \lambda_k^{(j-1)}$. If $r_{j,p} \not\to 0$ as
$p \to \infty$, then the set $\{r_{i,p}\}_{1 \le i \le j, p \in \mathbb{N}}$
belongs to a compact interval $[ \eps, 1 - \eps] \subset (0,1)$. For
every $1 \le i \le j$ let $(r_i^{(j)},n_i^{(j)})$ be an accumulation point of
$\{(r_{i,p},n_{i,p})\}_{p \in \mathbb{N}}$, then set 
\begin{equation}
  \Upsilon^{(j)} = \bigsqcup_{i = 1}^j r_{i}^{(j)} \Omega_{n_i^{(j)}} \in \CR.
\end{equation}
By continuity of the $k$-th eigenvalue and of the volume as functions of the variables $r_{1}, \dots, r_j$, the set $\Upsilon^{(j)}$ has volume $1$ and verifies $\lambda_k(\Upsilon^{(j)}) = \lambda_k^{(j)}$. 

We proved that there is a set of indices $J \subseteq \{1, \dots, k\}$ such that for all $j \in J$,
there exists a minimiser $\Upsilon^{(j)}$ of $\lambda_k^{(j)}$, whereas $\lambda_k^{(i)} \ge \min_{j \in J} \lambda_k^{j}$ for all $i \notin J$.
Therefore,
\begin{equation}
  \lambda_k^* = \min_{1 \le j \le k} \lambda_k^{(j)},
\end{equation}
is realised by the set $\Omega_k^* := \Upsilon^{(j)}$ for any (say, the smallest) index $j$ realising the previous minimum, thus completing the proof.
\end{proof} \vspace{6pt}

We now show the equivalent lemma for Neumann eigenvalues.

\begin{lem} \label{existence_max}
 For all $k \ge 1$, there exists a domain $\Omega_k^* \in \CR$ such that
 \begin{equation}
  \mu_k(\Omega_k^*) = \mu_k^*(\CR)
 \end{equation}
 and for any maximising domain $\nu(\Omega_k^*) \le k$.  
 \end{lem}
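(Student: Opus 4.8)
The plan is to mirror the argument of Lemma~\ref{existence_min}, replacing infima by suprema and Dirichlet eigenvalues by Neumann eigenvalues, but with attention to the fact that the Neumann problem behaves differently under disjoint unions: adding a small component $r\Omega_{n_i}$ introduces a zero Neumann eigenvalue $\mu_0 = 0$, which pushes the indexing \emph{up}, so small components can only increase $\mu_k$ in a controlled way rather than being harmlessly discardable. First I would set, for $j \in \N \cup \set\infty$,
\begin{equation}
  \mu_k^{(j)} = \sup\set{\mu_k(\Upsilon) : \Upsilon \in \CR, \, \abs\Upsilon \ge 1, \, \nu(\Upsilon) = j},
\end{equation}
so that $\mu_k^*(\CR) = \sup_j \mu_k^{(j)}$, and aim to show this supremum is attained at some $j \le k$.

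For the first step — reducing to $j \le k$ — I would take $\Upsilon = \bigsqcup_{i \in I} r_i\Omega_{n_i}$ with $\nu(\Upsilon) = j > k$ and $\abs\Upsilon = 1$, and analyse the multiset of Neumann eigenvalues, which is the sorted union of the spectra $\set{\mu_\ell(r_i\Omega_{n_i})}$ over all components $i$, each contributing its own $\mu_0 = 0$. Hence the multiplicity of the eigenvalue $0$ in $\spec(\Upsilon)$ is exactly $\nu(\Upsilon) = j$, so $\mu_k(\Upsilon)$ with $k < j$ is forced to be $0$ — and $0$ is certainly not a supremum. More usefully, I would order components so that $\mu_1(r_i\Omega_{n_i})$ is nonincreasing in $i$ (large components first), and keep only the components whose first nonzero Neumann eigenvalue is $\ge \mu_k(\Upsilon)$ together with enough components to fill the first $k$ slots; deleting the remaining components can only decrease the count of zeros and move the ``useful'' eigenvalues down the list no further than position $k$, so one obtains $\tilde\Upsilon$ with $\nu(\tilde\Upsilon) \le k$ and $\mu_k(\tilde\Upsilon) \ge \mu_k(\Upsilon)$; after rescaling to volume $1$ (which only increases Neumann eigenvalues, using $\abs{\tilde\Upsilon} \le 1$ and $\mu_k(r\Upsilon) = r^{-2}\mu_k(\Upsilon)$ with $r \ge 1$) we get $\mu_k^{(l)} \ge \mu_k^{(j)}$ for some $l \le k$, so $\mu_k^*(\CR) = \max_{1 \le j \le k}\mu_k^{(j)}$.

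For the second step — attainment for each fixed $j \le k$ — I would take a maximising sequence $\Upsilon_p^{(j)} = \bigsqcup_{i=1}^j r_{i,p}\Omega_{n_{i,p}}$ of volume $1$, so the radii satisfy $\sum_i r_{i,p}^d = 1$ and in particular lie in $[0,1]$; order them $r_{1,p} \ge \dots \ge r_{j,p}$. If $r_{j,p} \to 0$ along a subsequence, then the smallest component's first nonzero eigenvalue $\mu_1(r_{j,p}\Omega_{n_{j,p}}) = r_{j,p}^{-2}\mu_1(\Omega_{n_{j,p}}) \to \infty$, so for large $p$ that eigenvalue exceeds $\mu_k(\Upsilon_p^{(j)})$ (which stays bounded, e.g.\ by the Kröger bound or just by the fact that it's a maximising sequence for a finite supremum); hence deleting that component leaves $\mu_k$ unchanged while dropping to $j-1$ components, giving $\mu_k^{(j)} \le \mu_k^{(j-1)}$ and we recurse. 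Otherwise all $r_{i,p}$ stay in a compact subinterval $[\eps, 1]$ of $(0,1]$, and after passing to a subsequence the finitely many pairs $(r_{i,p}, n_{i,p})$ converge to $(r_i^{(j)}, n_i^{(j)})$ with $r_i^{(j)} \ge \eps$; by continuity of $r \mapsto \mu_k$ and of the volume in the radii, $\Upsilon^{(j)} = \bigsqcup_{i=1}^j r_i^{(j)}\Omega_{n_i^{(j)}}$ has volume $1$ and realises $\mu_k^{(j)}$. Combining the two steps: there is $J \subseteq \set{1,\dots,k}$ such that $\mu_k^{(j)}$ is attained for $j \in J$ and $\mu_k^{(i)} \le \max_{j \in J}\mu_k^{(j)}$ otherwise, so $\mu_k^*(\CR) = \max_{1 \le j \le k}\mu_k^{(j)}$ is realised by $\Omega_k^* := \Upsilon^{(j)}$ for the appropriate $j$, with $\nu(\Omega_k^*) \le k$.

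The main obstacle I anticipate is the first step, specifically making rigorous the claim that one may prune a domain with too many components without lowering $\mu_k$: unlike the Dirichlet case, where deleting a component with $\lambda_1$ too large is a clean operation (it simply removes eigenvalues that lie above $\lambda_k(\Upsilon)$), in the Neumann case every deleted component also removes a zero eigenvalue, which shifts the entire indexing. One must argue carefully — via a counting/interlacing argument on the sorted union of spectra — that keeping the $l \le k$ ``dominant'' components (those whose first nonzero eigenvalue is small) preserves $\mu_k$ from below; the bookkeeping of how many zeros versus positive eigenvalues sit below position $k$ is where the proof of \cite[Lemma 8]{BI} needs to be adapted rather than quoted verbatim. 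A secondary point requiring care is confirming that Neumann eigenvalues are genuinely continuous (not merely semicontinuous) in the radii $r_1, \dots, r_j$ on the compact range $[\eps,1]^j$, which holds because each $\mu_\ell(r_i\Omega_{n_i}) = r_i^{-2}\mu_\ell(\Omega_{n_i})$ depends continuously (indeed smoothly) on $r_i > 0$ and the $k$-th eigenvalue of a finite disjoint union is a continuous symmetric function of the eigenvalues of the pieces.
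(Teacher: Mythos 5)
Your overall strategy is sound and the lemma does follow from it, but your route through the hard step is genuinely different from the paper's. The reduction to $\nu(\Upsilon)\le k$ is handled in the paper in one sentence --- exactly your first observation that $\nu(\Upsilon)>k$ forces $\mu_k(\Upsilon)=0$ --- so the elaborate ordering-and-pruning procedure you then describe for step 1 is superfluous. For the attainment step, where a maximising sequence may have a component shrinking to a point, the paper does not prune and recurse over the number of components: it fixes $\eta>0$, replaces the vanishing component $r_{1,p}\Omega_{n_{1,p}}$ by $\eta\Omega_1$ and inflates the remaining components to restore unit volume, obtaining a competitor with the \emph{same} number of components whose $k$-th eigenvalue exceeds $\mu_k(\Upsilon_p)$ by a fixed positive amount; this contradicts near-maximality directly, with no induction on $j$. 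Your prune-and-recurse dichotomy, modelled on the Dirichlet Lemma \ref{existence_min}, also works once made precise: it trades the paper's quantitative competitor estimate for an induction on $j$ whose base case $j=1$ is trivial.

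Two slips should be corrected. In step 1 you assert that rescaling a domain of volume at most $1$ up to volume $1$ ``only increases Neumann eigenvalues''; with $r\ge 1$ one has $\mu_k(r\Upsilon)=r^{-2}\mu_k(\Upsilon)\le\mu_k(\Upsilon)$, so inflation \emph{decreases} them. This sits in the redundant part of step 1 and does no damage, but the same issue resurfaces in step 2, where deleting the vanishing component does not leave $\mu_k$ unchanged: since that component contributes only its zero eigenvalue below $\mu_k(\Upsilon_p)$, the remainder $\Xi_p$ satisfies $\mu_{k-1}(\Xi_p)=\mu_k(\Upsilon_p)$, hence $\mu_k(\Xi_p)\ge\mu_k(\Upsilon_p)$; moreover $\Xi_p$ has volume $1-r_{j,p}^d<1$ and must be inflated to be admissible, costing a factor $(1-r_{j,p}^d)^{2/d}$. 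Both effects are harmless in the limit --- the index shift goes your way and the volume factor tends to $1$ --- so $\mu_k^{(j-1)}\ge\mu_k^{(j)}$ does follow, but as written the inequality is asserted for the wrong reason.
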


\begin{proof}
The first step of this proof is easier in the setting of Neumann eigenvalues. Indeed, no maximising sequence $\set{\Upsilon_n}$ for $\mu_k^*(\CR)$ can have $\nu(\Upsilon_n) > k$ infinitely often, since $\nu(\Upsilon) > k$ implies immediately $\mu_k(\Upsilon) = 0$.
 
For the second step, since the supremum for $\mu_k^*(\CR)$ is taken over domains of volume larger or equal to $1$, we need to verify both that no connected component of a maximising sequence converges to $0$ and that none grows unbounded. This last possibility is easily excluded by restricting our attention to maximising sequences of domains which all have volume $1$. However, showing that no connected component has volume converging to $0$ is subtler.
 
 For any $\eps$, consider the set
 \begin{equation}
  \CM_\eps = \set{\Upsilon \in \CR : |\Upsilon| \ge 1 \text{ and } \mu_k(\Upsilon) > \mu_k^*(\CR) - \eps)}.
 \end{equation}
One can restrict the search for a maximising sequence to any $\CM_\eps$ rather than $\CR$. We now show that for $\eps$ small enough, there exists some $\delta$ such that for all $\Upsilon \in \CM_\eps$, all connected components of $\Upsilon$ have volume greater than $\delta$. Using the same compacity argument as with Dirichlet eigenvalues, this is sufficient to obtain the existence of a maximiser.

Suppose that such a $\delta$ does not exist. Hence, there is a maximising sequence
\begin{equation}
  \Upsilon_p = \bigsqcup_{i = 1}^{q}r_{i,p} \Omega_{n_{i,p}}
\end{equation}
with the following properties.
\begin{itemize}
 \item For all $p$, the number of connected components $q$ is smaller than $k$.
 \item Arranging $r_{1,p} \le r_{2,p} \le \dotso \le r_{q,p}$, we have that
   $r_{1,p} \to 0$ as $p \to \infty$.
 \item The eigenvalues $\mu_k(\Upsilon_p)$ increase and converge to $\mu_k^*(\CR)$
   as $p \to \infty$.
\end{itemize}
We will write $\Upsilon_p = r_{1,p} \Omega_{n_1,p} \cup \Xi_p$, each of them having volume
$r_{1,p}^d$ and $1 - r_{1,p}^d$ respectively. 

From \cite{K}, we know that there is a constant $C_k$ such that for all $k$,
$\mu_k(\Upsilon) < C_k$. There is an $r_0$ such that for all $l$,
$r_0^{-2}\mu_1(\Omega_l) \ge C_k$. For $p$ large enough so that $r_{1,p} < r_0$,
we have $r_{1,p}^{-2}\mu_1(\Omega_{n_{1,p}}) > C_k$, hence $\mu_k(\Upsilon_p) =
\mu_{k-1}(\Xi_p)$.

For any $\eta \in (0,r_0)$, consider the following sequence of domains of volume $1$ in $\CR$:
\begin{equation}
 \tilde \Upsilon_p^{(\eta)} = \eta \Omega_1 \sqcup \left(\frac{1 - \eta^d}{1 -
 r_{1,p}^d}\right)^{1/d} \Xi_p.
\end{equation}
Without loss of generality, we have supposed $\eta < 1$.

For $p$ large and since $\eta < r_0$, 
\begin{equation}
\begin{aligned}
\mu_k(\tilde \Upsilon_{p}) &= \mu_{k-1}\left(\left(\frac{1 - \eta^d}{1 -
r_{1,p}^d}\right)^{1/d} \Xi_p\right) \\
&= \left(\frac{1 - r_{1,p}^d}{1 - \eta^d}\right)^{2/d} \mu_k(\Upsilon_p) \\
&= \frac{1}{(1 - \eta^d)^{2/d}} \mu_k(\Upsilon_p)\left(1 + \bigo{r_{1,p}}\right).
\end{aligned}
\end{equation}

Hence,
\begin{equation}
\begin{aligned}
 \mu_k(\tilde \Upsilon_p) - \mu_k(\Upsilon_p) &= \left(\frac{1 + \bigo{r_{1,p}}}{(1
 - \eta^d)^{2/d}} -  1 \right) \mu_k(\Upsilon_p) \\
 & \ge \frac{2\eta^d \mu_k(\Upsilon_1)}{d} \left(1 + \bigo{r_{1,p}}\right)
\end{aligned}
 \end{equation}
Since $\mu_k^*(\CR) > \mu_k(\tilde \Upsilon_p)$ this implies that if $\eps <
\frac{2\eta^d \mu_k(\Upsilon_1)}{d}$, then for $p$ large enough $\Upsilon_p \not \in
\CM_\eps$. Since every maximising sequence must eventually stay in
$\CM_\eps$ for any $\eps$, this contradicts that $\Upsilon_p$ was a maximising sequence. This in turn implies the lemma.
\end{proof}

Note that both of these proofs show existence but say nothing about uniqueness. Despite this possible lack of uniqueness, in this paper we shall write $\Omega_k^*$ to denote any extremiser of $\lambda_k$ or of $\mu_k$ on $\CR$.

\begin{lem}\label{subadditivity}
 The sequence
 \begin{equation}
 \set{ \lambda_k^*(\CR)^{d/2}}_{k \in \mathbb{N}}
 \end{equation}
is subadditive, that is for every $j_1, \dotsc j_p$ such that $j_1 + \dotso j_p = k$, we have
\begin{equation}
 \lambda_k^*(\CR)^{d/2} \le \lambda_{j_1}^*(\CR)^{d/2} + \dotso + \lambda_{j_p}^*(\CR)^{d/2}.
\end{equation}

\end{lem}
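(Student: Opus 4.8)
The plan is to exploit the two properties that the family $\CR$ is closed under: disjoint unions and homotheties. The idea is that given near-optimal domains $\Upsilon_1, \dotsc, \Upsilon_p$ realising (approximately) $\lambda_{j_1}^*(\CR), \dotsc, \lambda_{j_p}^*(\CR)$, we can rescale each of them so that their $\lambda_1$'s become large enough compared to the relevant target eigenvalue, and then take their disjoint union, which lies in $\CR$. The spectrum of a disjoint union is the union (with multiplicity) of the spectra, sorted in increasing order, so one controls $\lambda_k$ of the union in terms of the $\lambda_{j_i}$ of the pieces.

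First I would recall the scaling rule $\lambda_m(r\Upsilon) = r^{-2}\lambda_m(\Upsilon)$ and the volume rule $|r\Upsilon| = r^d |\Upsilon|$, and fix $\eps > 0$. For each $i$, pick $\Upsilon_i \in \CR$ with $|\Upsilon_i| \le 1$ and $\lambda_{j_i}(\Upsilon_i) \le \lambda_{j_i}^*(\CR) + \eps$; by homogeneity I may assume $|\Upsilon_i| = 1$, so $\lambda_{j_i}(\Upsilon_i)^{d/2} \le \lambda_{j_i}^*(\CR)^{d/2} + O(\eps)$. Now rescale $\Upsilon_i$ to $t_i \Upsilon_i$ with the scaling factors $t_i$ to be chosen so that the first $j_i$ eigenvalues of $t_i\Upsilon_i$ all lie below a common threshold $\Lambda$ and so that the resulting volumes sum to at most $1$. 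The natural choice is to make all the pieces $t_i\Upsilon_i$ share the property $\lambda_{j_i}(t_i \Upsilon_i) = \Lambda$, i.e. $t_i^{-2}\lambda_{j_i}(\Upsilon_i) = \Lambda$, so $t_i = (\lambda_{j_i}(\Upsilon_i)/\Lambda)^{1/2}$ and $|t_i\Upsilon_i| = t_i^d = (\lambda_{j_i}(\Upsilon_i)/\Lambda)^{d/2}$. Then $\Upsilon := \bigsqcup_{i=1}^p t_i\Upsilon_i \in \CR$ has volume $\sum_i (\lambda_{j_i}(\Upsilon_i)/\Lambda)^{d/2}$, and among its eigenvalues the first $j_1 + \dotsb + j_p = k$ are all $\le \Lambda$ (each piece contributes exactly $j_i$ eigenvalues at or below $\Lambda$), so $\lambda_k(\Upsilon) \le \Lambda$. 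Choosing $\Lambda$ so that $|\Upsilon| = 1$, namely $\Lambda^{d/2} = \sum_{i=1}^p \lambda_{j_i}(\Upsilon_i)^{d/2}$, we get $\lambda_k^*(\CR) \le \lambda_k(\Upsilon) \le \Lambda = \bigl(\sum_i \lambda_{j_i}(\Upsilon_i)^{d/2}\bigr)^{2/d}$, hence $\lambda_k^*(\CR)^{d/2} \le \sum_i \lambda_{j_i}(\Upsilon_i)^{d/2} \le \sum_i \lambda_{j_i}^*(\CR)^{d/2} + O(\eps)$. Letting $\eps \to 0$ finishes the argument.

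The one point requiring a little care — and the main (mild) obstacle — is the counting of eigenvalues in the disjoint union: I need that $\lambda_k$ of $\bigsqcup_i t_i\Upsilon_i$ is at most $\Lambda$ when each $t_i\Upsilon_i$ has at least $j_i$ Dirichlet eigenvalues $\le \Lambda$ and $\sum j_i = k$. This is just the statement that the eigenvalues of a disjoint union are the sorted multiset-union of the eigenvalues of the components (true since eigenfunctions supported on distinct components are $L^2$-orthogonal and span the full space), so the $k$-th smallest is bounded above by $\Lambda$. There is a subtlety if $\Upsilon_i$ itself has infinitely many connected components, but the same reasoning applies verbatim, and in any case by Lemma~\ref{existence_min} we may take each $\Upsilon_i$ to be an honest minimiser $\Omega_{j_i}^*$ with finitely many ($\le j_i$) components, so $\Upsilon$ has at most $k$ components and lies in $\CR$ with no convergence issues. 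With that, the proof is complete.
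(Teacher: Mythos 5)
Your proof is correct and is essentially the paper's own argument: build the disjoint union of rescaled minimisers $\Omega_{j_q}^*$ (whose existence is Lemma~\ref{existence_min}), use that the spectrum of the union is the sorted union of the spectra to count $k$ eigenvalues below a common threshold, and conclude by scaling. The only difference is cosmetic normalisation — you fix the total volume to be $1$ and read off $\lambda_k^*(\CR)\le\Lambda$ directly, whereas the paper fixes the common threshold to be $\lambda_k^*(\CR)$ and deduces $|\Upsilon|\ge 1$.
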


\begin{proof}
 The proof here follows that of \cite[Theorem 2.1]{CE}. Fix $k \ge 1$ and let
 $j_1, \dots, j_p \in \mathbb{N}$ be such that $j_1 + \dots + j_p = k$. By Lemma
 \ref{existence_min}, for each $1 \le q \le p$, there exists $\Omega_{j_q}^* \in \CR$ with volume $1$ such that
 \begin{equation}
   \lambda_{j_q}^*(\CR) = \lambda_{j_q}(\Omega_{j_q}^*).
 \end{equation}
Let
\begin{equation}
 \Upsilon_q := \left( \frac{\lambda_{j_q}^*(\CR)}{\lambda_k^*(\CR)}\right)^{1/2}
 \Omega_{j_q}^*,
\end{equation}
which implies that $\lambda_{j_q}(\Upsilon_q) = \lambda_k^*(\CR)$ and that 
\begin{equation}
|\Upsilon_q| = \left(\frac{\lambda_{j_q}^*(\CR)}{\lambda_k^*(\CR)}\right)^{d/2}.
\end{equation}
Define the domain
\begin{equation}
 \Upsilon = \bigsqcup_{q = 1}^p \Upsilon_q.
\end{equation}
Since the spectrum of a disjoint union is the union of the spectra, we have
\begin{equation}
 N(\lambda_k^*(\CR);\Upsilon) = \sum_{q = 1}^p N(\lambda_k^*(\CR);\Upsilon_q) =
 \sum_{q = 1}^p N(\lambda_{j_q}^*(\Upsilon_q);\Upsilon_q) \ge \sum_{q=1}^p j_q = k 
\end{equation}
where $N$ is the eigenvalue counting function
\begin{equation}
\label{eq:counting}
N(\lambda;\Upsilon) := \# \set{k : \lambda_k(\Upsilon) \le \lambda}.
\end{equation}
It follows that
$\lambda_k(\Upsilon) \le \lambda_k^*(\CR)$. Since $|\Upsilon|^{-1/d} \Upsilon$ has
volume $1$ we have $\lambda_k^*(\CR) \le \lambda_k \left( |\Upsilon|^{-1/d} \Upsilon
\right) =  \lambda_k(\Upsilon)|\Upsilon|^{2/d}$, thus
\begin{equation}
 |\Upsilon| \ge \left(\frac{\lambda_k^*(\CR)}{\lambda_k(\Upsilon)}\right)^{d/2} \ge 1, 
\end{equation}
whence
\begin{equation}
 1 \le \sum_{q=1}^p |\Upsilon_n| = \frac{1}{\lambda_k^*(\CR)^{d/2}}\sum_{q = 1}^p
 \lambda_{j_q}^*(\CR)^{d/2}.
\end{equation}
Multiplying both sides of this inequality by $\lambda^*_k(\CR)^{d/2}$ finishes the proof.
\end{proof}\vspace{6pt}

\begin{lem}\label{super-additivity}
 The sequence
 \begin{equation}
 \set{ \mu_k^*(\CR)^{d/2}}_{k \in \mathbb{N}}
 \end{equation}
is super-additive, that is for every $j_1, \dotsc j_p$ such that $j_1 + \dotso j_p = k$, we have
\begin{equation}
 \mu_k^*(\CR)^{d/2} \ge \mu_{j_1}^*(\CR)^{d/2} + \dotso + \mu_{j_p}^*(\CR)^{d/2}.
\end{equation}

\end{lem}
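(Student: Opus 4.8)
The plan is to mirror the proof of Lemma \ref{subadditivity} (the Dirichlet subadditivity result), but with all inequalities reversed, exploiting that $\mu_k^*(\CR)$ is defined by a supremum over domains of volume at least $1$. The key structural facts I will use are the same two: the spectrum of a disjoint union is the multiset-union of the spectra of the components, and the scaling $\mu_k(r\Upsilon) = r^{-2}\mu_k(\Upsilon)$, together with the existence of maximisers guaranteed by Lemma \ref{existence_max}.

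Concretely, fix $k \ge 1$ and a partition $j_1 + \dots + j_p = k$. By Lemma \ref{existence_max}, for each $q$ pick a maximiser $\Omega_{j_q}^* \in \CR$ of volume $1$ with $\mu_{j_q}(\Omega_{j_q}^*) = \mu_{j_q}^*(\CR)$. Rescale each one so that its $j_q$-th Neumann eigenvalue equals the common value $\mu_k^*(\CR)$: set
\begin{equation}
  \Upsilon_q := \left( \frac{\mu_{j_q}^*(\CR)}{\mu_k^*(\CR)} \right)^{1/2} \Omega_{j_q}^*,
\end{equation}
so that $\mu_{j_q}(\Upsilon_q) = \mu_k^*(\CR)$ and $|\Upsilon_q| = \left( \mu_{j_q}^*(\CR)/\mu_k^*(\CR) \right)^{d/2}$. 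Let $\Upsilon = \bigsqcup_{q=1}^p \Upsilon_q$. Now I count eigenvalues: since the Neumann spectrum of a disjoint union is the union of spectra (and each component contributes its ground state $\mu_0 = 0$; this is where the convention of starting Neumann numbering at $0$ pays off), the number of eigenvalues of $\Upsilon$ that are \emph{strictly less than} $\mu_k^*(\CR)$ is at most $\sum_q j_q = k$ — because each $\Upsilon_q$ contributes at most $j_q$ eigenvalues below the threshold $\mu_{j_q}(\Upsilon_q) = \mu_k^*(\CR)$, counting from $\mu_0$. Hence $\mu_k(\Upsilon) \ge \mu_k^*(\CR)$. On the other hand, rescaling $\Upsilon$ to unit volume can only decrease eigenvalues if $|\Upsilon| \ge 1$ and increase them if $|\Upsilon| \le 1$; the definition of $\mu_k^*(\CR)$ as a supremum over volume $\ge 1$ forces $\mu_k^*(\CR) \ge \mu_k(|\Upsilon|^{-1/d}\Upsilon) = \mu_k(\Upsilon)|\Upsilon|^{2/d}$ when $|\Upsilon| \ge 1$, but since we only know $\mu_k(\Upsilon) \ge \mu_k^*(\CR)$ we deduce $|\Upsilon|^{2/d} \le 1$, i.e.\ $|\Upsilon| \le 1$. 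Therefore
\begin{equation}
  1 \ge \sum_{q=1}^p |\Upsilon_q| = \frac{1}{\mu_k^*(\CR)^{d/2}} \sum_{q=1}^p \mu_{j_q}^*(\CR)^{d/2},
\end{equation}
and multiplying through by $\mu_k^*(\CR)^{d/2}$ gives the claimed superadditivity.

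The main subtlety — the step I expect to need the most care — is the eigenvalue-counting direction. For Dirichlet eigenvalues one uses $N(\lambda;\Upsilon) \ge \sum j_q = k$ with a non-strict counting function, which immediately yields $\lambda_k(\Upsilon) \le \lambda$. For Neumann one must instead bound the count of eigenvalues strictly below the threshold from above by $k$, in order to conclude $\mu_k(\Upsilon) \ge \mu_k^*(\CR)$; one has to be careful about whether the threshold value itself is attained and with which multiplicity (it is attained by each component, so $\mu_k(\Upsilon)$ could equal $\mu_k^*(\CR)$, which is fine), and about correctly including the zero eigenvalues of all components in the bookkeeping. Once the counting inequality is set up with the right strict/non-strict conventions, the rescaling argument and the arithmetic with volumes are entirely parallel to Lemma \ref{subadditivity}, just with the inequality $|\Upsilon|\ge 1$ replaced by $|\Upsilon|\le 1$ because the roles of infimum and supremum are interchanged.
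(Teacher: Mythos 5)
Your proof is correct and follows essentially the same route as the paper's: the same rescaled disjoint union $\Upsilon = \bigsqcup_q (\mu_{j_q}^*/\mu_k^*)^{1/2}\Omega_{j_q}^*$, the same eigenvalue-counting across components, and the same renormalisation to unit volume. The only difference is presentational — you argue directly (count of eigenvalues strictly below the threshold is at most $k$, hence $\mu_k(\Upsilon)\ge\mu_k^*(\CR)$, hence $|\Upsilon|\le 1$), whereas the paper runs the contrapositive, assuming $|\Upsilon|>1$ and deriving $k+1\le N(\mu_k(\Upsilon);\Upsilon)\le k$.
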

\begin{proof}
 Suppose on the contrary that there exist $j_1, \dots, j_p, k \in \N$ such that $j_1 + \dots + j_p = k$ and
 \begin{equation}
 \mu_k^*(\CR)^{d/2} < \mu_{j_1}^*(\CR)^{d/2} + \dotso + \mu_{j_p}^*(\CR)^{d/2},
 \end{equation}
 that is
  \begin{equation}
 1 \, < \,  \sum_{q=1}^p \left( \frac{ \mu_{j_q}^*(\CR)}{\mu_k^*(\CR)} \right)^{d/2} \, .
 \end{equation}
 From Lemma \ref{existence_max}, for every $1 \le q \le p$ there exists
 $\Omega^*_{j_q} \in \CR$ with volume $1$ such that $\mu_{j_n}(\Omega^*_{j_q}) =
 \mu^*_{j_q}(\CR)$. We set
 \begin{equation}
 \Upsilon \, = \, \bigsqcup_{q=1}^p \, \Upsilon_q \; \mbox{ where } \; \Upsilon_q \, =
 \, \left( \frac{\mu_{j_q}^*(\CR)}{\mu_k^*(\CR)}  \right)^{1/2} \, \Omega_{j_q}^* \, .
 \end{equation}
 It follows that $\mu_{j_q}(\Upsilon_q) = \mu_k^*(\CR)$ and that
 \begin{equation}
 |\Upsilon| \, = \, \sum_{q=1}^p \, |\Upsilon_q| =  \sum_{q=1}^p \left( \frac{
 \mu_{j_q}^*(\CR)}{\mu_k^*(\CR)} \right)^{d/2} > 1 \, .
 \end{equation}
 From this and since $|\Upsilon|^{-1/d}\Upsilon$ has volume $1$, we have
 \begin{equation}
\mu_k(\Upsilon) < |\Upsilon|^{2/d} \mu_k(\Upsilon) = \mu_k \left( |\Upsilon|^{-1/d} \Upsilon \right)  \le \mu_k^*(\CR) \, .
\end{equation}
Consequently $\mu_k(\Upsilon) < \mu_{j_q}(\Upsilon_n)$ for each $q$ and we deduce,
recalling that the spectrum of $\Upsilon$ is the reunion of the spectra of the
$\Upsilon_q$'s,
 \begin{equation}
 k+1 \le N(\mu_k(\Upsilon);\Upsilon) = \sum_{q = 1}^p N(\mu_k(\Upsilon);\Upsilon_q) \le
 \sum_{q = 1}^p j_q = k \, ,
 \end{equation}
 where the counting function is defined as in \eqref{eq:counting} but for Neumann eigenvalues. This contradiction yields the claim.
\end{proof}\vspace{6pt}

\begin{cor}\label{BLY}
We have
\begin{equation}
 L := \lim_{k \to \infty} \frac{\lambda_k^*(\CR)^{d/2}}{k} = \inf_k \frac{\lambda_k^*(\CR)^{d/2}}{k} > 0 
 \end{equation}
 and
 \begin{equation}
  +\infty > M := \lim_{k \to \infty} \frac{\mu_k^*(\CR)^{d/2}}{k} = \sup_k \frac{\mu_k^*(\CR)^{d/2}}{k} > 0 \; .
\end{equation}
\end{cor}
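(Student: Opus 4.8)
The plan is to obtain Corollary \ref{BLY} directly from Fekete's lemma applied to the sequences produced by Lemmas \ref{subadditivity} and \ref{super-additivity}, combined with the universal eigenvalue bounds already recalled in the introduction; no new geometric input is needed. The two equalities in the statement are exactly the subadditive and superadditive conclusions of Fekete's lemma, and the side conditions $L>0$ and $M<+\infty$ are read off from the Berezin--Li--Yau and Kr\"oger bounds, respectively, while $M>0$ is elementary.

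For the Dirichlet statement I would first note that Lemma \ref{subadditivity}, taken with $p=2$, says precisely that $a_k:=\lambda_k^*(\CR)^{d/2}$ is subadditive, $a_{m+n}\le a_m+a_n$. Fekete's lemma then gives that $a_k/k$ converges and that the limit equals $\inf_k a_k/k=:L$, which is the first claimed identity. To see $L>0$, I would apply the individual-eigenvalue consequence of \eqref{blyineq} recalled above: for any $\Upsilon\in\CR$ with $|\Upsilon|\le 1$ one has $\lambda_k(\Upsilon)\ge \frac{4\pi^2 d}{d+2}(k/(\omega_d|\Upsilon|))^{2/d}\ge \frac{4\pi^2 d}{d+2}(k/\omega_d)^{2/d}$; taking the infimum over such $\Upsilon$ yields $\lambda_k^*(\CR)^{d/2}\ge \omega_d^{-1}\bigl(\tfrac{4\pi^2 d}{d+2}\bigr)^{d/2}k$, so $a_k/k$ is bounded below by a fixed positive constant, forcing $L>0$. (Note $a_k<\infty$, e.g. $a_k\le\lambda_k(\Omega_1)^{d/2}$, so $L$ is a genuine positive real.)

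For the Neumann statement, Lemma \ref{super-additivity} gives superadditivity of $b_k:=\mu_k^*(\CR)^{d/2}$, and the superadditive form of Fekete's lemma gives that $b_k/k$ converges to $\sup_k b_k/k=:M\in(0,+\infty]$. Here $M>0$ is immediate, since the single generator $\Omega_1$ has volume $1$ and $\mu_1(\Omega_1)>0$, whence $M\ge b_1=\mu_1^*(\CR)^{d/2}\ge\mu_1(\Omega_1)^{d/2}>0$. Finiteness follows from Kr\"oger's individual bound $\mu_k(\Upsilon)\le 4\pi^2(\tfrac{d+2}{2})^{2/d}(k/(\omega_d|\Upsilon|))^{2/d}$: for $\Upsilon\in\CR$ with $|\Upsilon|\ge 1$ this is at most $4\pi^2(\tfrac{d+2}{2})^{2/d}(k/\omega_d)^{2/d}$, so taking the supremum gives $b_k/k\le \omega_d^{-1}\bigl(4\pi^2(\tfrac{d+2}{2})^{2/d}\bigr)^{d/2}$ for all $k$, hence $M<+\infty$.

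I do not expect any genuine obstacle; the single point requiring a word of care is that the Berezin--Li--Yau and Kr\"oger inequalities are being invoked for the possibly disconnected sets $\Upsilon\in\CR$, which need not even be bounded as subsets of $\R^d$ although they have finite volume. Since both inequalities descend from bounds on the eigenvalue counting function, and that function is additive over the decomposition $\Upsilon=\bigsqcup_i r_i\Omega_{n_i}$ (with the right-hand sides scaling by $|\Upsilon|=\sum_i r_i^d|\Omega_{n_i}|$), they hold verbatim for every $\Upsilon\in\CR$ of finite volume, which is all that the above uses.
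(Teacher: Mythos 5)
Your proof is correct and follows essentially the same route as the paper: Fekete's lemma applied to the subadditive/superadditive sequences from Lemmas \ref{subadditivity} and \ref{super-additivity}, with the Berezin--Li--Yau bound giving $L>0$ and Kr\"oger's bound giving $M<+\infty$. The only (harmless) deviation is your argument for $M>0$ via $M\ge \mu_1^*(\CR)^{d/2}\ge \mu_1(\Omega_1)^{d/2}>0$, where the paper instead invokes Weyl's law for a fixed generator; your remark that the Berezin--Li--Yau and Kr\"oger inequalities extend to the disconnected sets in $\CR$ because they descend from additive Riesz-mean/counting-function bounds is a legitimate point of care that the paper passes over silently.
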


\begin{proof}
 For the Dirichlet case, that the limit exists and is equal to the infimum follows from Fekete's lemma applied to the subadditive and nonnegative sequence $a_k = \lambda_k^*(\CR)^{d/2}$. That the limit is positive is a consequence of the works of Berezin~\cite{B} and Li and Yau~\cite{LY} proving that
 \begin{equation}
  \frac{\lambda_k^{* \, d/2}}{k} \ge \left( \frac{d}{d+2}\right)^{d/2} \, \frac{(2 \pi)^d }{\omega_d}.
 \end{equation}
 For the Neumann case, that the limit exists in $\R$ and is equal to the supremum follows from Fekete's lemma applied to the super-additive and linearly bounded sequence $a_k = \mu_k^*(\CR)^{d/2}$, where the linear boundedness results from Kröger's estimate \cite{K}\footnote{In Kröger's article, Neumann eigenvalues are numbered starting with $1$ so that $\mu_1 = 0$.}
 \begin{equation}
  \frac{\mu_k^{* \, d/2}}{k} \le  \frac{d+2}{2} \, \frac{(2 \pi)^d }{\omega_d}  .
 \end{equation}
 That the limit is positive follows from $\mu_k(\Omega) \le \mu_k^*$ and from Weyl's asymptotic law
 \begin{equation}
 \lim_{k \to \infty} \frac{\mu_k(\Omega)^{d/2}}{k} =  \frac{(2 \pi)^d }{\omega_d}.
 \end{equation}
\end{proof}

P\'olya's conjecture can therefore be expressed as
\begin{equation}
L = \frac{(2 \pi)^d}{\omega_d} = M
\end{equation}
and thus reduces to finding a subsequence of extremisers $\Omega_k^*$ such that
\begin{equation}
 \lim_{k \to \infty} \frac{\lambda_k^*(\Omega_k^*)^{d/2}}{k} = \frac{(2 \pi)^d}{\omega_d} = \lim_{k \to \infty} \frac{\mu_k^*(\Omega_k^*)^{d/2}}{k}\; .
\end{equation}

The following lemma is an adaptation of a famous result of Wolf and Keller \cite{WK} to the class $\CR$. Our proof however differs somewhat from the original proof.
 
\begin{lem} \label{wk}
  For every $k \in \N$, and any $\Omega_k^*$ realising $\lambda_k^*(\CR)$, there exists a partition $j_1 + \dotso + j_p = k$ such that
  \begin{equation}
    \Omega_k^* = \bigsqcup_{q=1}^p \alpha_q \Omega_{j_q}^*.
  \end{equation}
  Here,
  \begin{equation}
    \alpha_q = \sqrt{\frac{\lambda_k^*(\CR)}{\lambda_{j_q}^*(\CR)}}.
  \end{equation}
  Furthermore,
  \begin{equation}
    \lambda_k^*(\CR)^{d/2} = \sum_{q=1}^p \lambda_{j_q}^*(\CR)^{d/2}.
  \end{equation}
\end{lem}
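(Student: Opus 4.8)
The plan is to prove this Wolf--Keller type decomposition in two stages: first establish the upper bound $\lambda_k^*(\CR)^{d/2} \le \sum_q \lambda_{j_q}^*(\CR)^{d/2}$ for a suitable partition coming from the minimiser $\Omega_k^*$, and then use subadditivity (Lemma \ref{subadditivity}) to get the matching lower bound, forcing equality and pinning down the structure of $\Omega_k^*$.

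\medskip

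First I would invoke Lemma \ref{existence_min} to write $\Omega_k^* = \bigsqcup_{i=1}^\nu \beta_i \Omega_{n_i}$ with $\nu = \nu(\Omega_k^*) \le k$ and $|\Omega_k^*| = 1$, where each $\Omega_{n_i}$ is a generator. Since the spectrum of a disjoint union is the (sorted) union of the spectra, the eigenvalue $\lambda_k^*(\CR) = \lambda_k(\Omega_k^*)$ is realised as some $\lambda_{m_i}(\beta_i \Omega_{n_i})$ on one of the pieces, and more importantly each piece $\beta_i \Omega_{n_i}$ contributes exactly $j_i := N(\lambda_k^*(\CR); \beta_i\Omega_{n_i})$ eigenvalues below or equal to $\lambda_k^*(\CR)$, with $\sum_i j_i \ge k$; by minimality one expects $\sum_i j_i = k$ exactly (if it were $>k$ one could slightly shrink the whole domain, contradicting that $\Omega_k^*$ is a minimiser of volume $1$ — this is the standard argument, analogous to the one establishing $\lambda_k(\Upsilon) \le \lambda_k^*(\CR)$ in Lemma \ref{subadditivity}). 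Now the key claim is that each piece $\beta_i \Omega_{n_i}$, appropriately rescaled to volume $1$, must itself realise $\lambda_{j_i}^*(\CR)$: indeed $\beta_i \Omega_{n_i}$ has $\lambda_{j_i}(\beta_i\Omega_{n_i}) \le \lambda_k^*(\CR)$, and if the rescaled copy were not optimal for $\lambda_{j_i}$ we could replace that component by a better one (a domain in $\CR$ realising $\lambda_{j_i}^*(\CR)$, rescaled to the same volume $\beta_i^d$), strictly decreasing $\lambda_{j_i}$ on that piece without increasing the volume, hence strictly decreasing $\lambda_k$ of the union after renormalising volume — contradicting optimality of $\Omega_k^*$. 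This identifies $\Omega_{j_i}^* := \beta_i^{-d \cdot (1/d)}(\beta_i \Omega_{n_i})$, wait — more carefully, it shows $\beta_i \Omega_{n_i} = \gamma_i \Omega_{j_i}^*$ for $\gamma_i = \beta_i |\Omega_{n_i}|^{1/d} = \beta_i$ and some minimiser $\Omega_{j_i}^*$ of $\lambda_{j_i}^*(\CR)$.

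\medskip

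Next I would pin down the scale factors. On the piece $\gamma_i \Omega_{j_i}^*$ we have $\lambda_{j_i}(\gamma_i \Omega_{j_i}^*) = \gamma_i^{-2}\lambda_{j_i}^*(\CR) \le \lambda_k^*(\CR)$, so $\gamma_i^2 \ge \lambda_{j_i}^*(\CR)/\lambda_k^*(\CR)$. Conversely, if some inequality were strict, the piece would contribute strictly more than $j_i$ eigenvalues below $\lambda_k^*(\CR)$ unless it is ``tight''; a short counting argument (using that $\lambda_{j_i+1}^*(\CR) > \lambda_{j_i}^*(\CR)$ would be too strong to assume, so instead one argues via the volume-optimality: given the partition $j_1 + \dots + j_p = k$ is fixed, the volume $\sum_i \gamma_i^d$ is minimised subject to $\gamma_i^{-2}\lambda_{j_i}^*(\CR) \le \lambda_k^*(\CR)$ precisely when each $\gamma_i$ is as small as possible, i.e. $\gamma_i = \sqrt{\lambda_k^*(\CR)/\lambda_{j_i}^*(\CR)}$, since decreasing volume and renormalising decreases $\lambda_k$) forces $\alpha_q = \gamma_q = \sqrt{\lambda_k^*(\CR)/\lambda_{j_q}^*(\CR)}$. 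The volume identity $\sum_q \alpha_q^d = |\Omega_k^*| = 1$ then reads $\sum_q \lambda_{j_q}^*(\CR)^{d/2}/\lambda_k^*(\CR)^{d/2} = 1$, which after multiplying by $\lambda_k^*(\CR)^{d/2}$ is exactly the claimed identity $\lambda_k^*(\CR)^{d/2} = \sum_q \lambda_{j_q}^*(\CR)^{d/2}$. (Consistency with subadditivity, Lemma \ref{subadditivity}, confirms $\le$; the construction here gives the reverse, so the inequality in Lemma \ref{subadditivity} is an equality for \emph{this} particular partition.)

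\medskip

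The main obstacle I anticipate is the rigorous justification that $\sum_i j_i = k$ exactly and that each $\gamma_i$ takes the extremal value — i.e. ruling out ``slack''. The clean way to handle both is a single variational/rescaling argument: among all domains in $\CR$ of the form $\bigsqcup_q \gamma_q \Omega_{j_q}^*$ with the $\Omega_{j_q}^*$ fixed optimal pieces and $\bigsqcup$ ranging over partitions of $k$ and positive scales, $\lambda_k$ subject to unit volume is minimised by choosing the partition and scales as above, and this minimum equals $\lambda_k^*(\CR)$ because (a) any such configuration has $\lambda_k \ge \lambda_k^*(\CR)$ by definition of the infimum, and (b) $\Omega_k^*$ itself, by the decomposition of its spectrum, \emph{is} such a configuration. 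One must be slightly careful that the pieces of $\Omega_k^*$ are generators $\Omega_{n_i}$ rather than arbitrary elements of $\CR$ — but since a generator is trivially an element of $\CR$ realising $\lambda_1^*(\CR)$ when $j_i = 1$, and for $j_i > 1$ the optimality-transfer argument shows the (rescaled) piece realises $\lambda_{j_i}^*(\CR)$ even though that extremiser may itself be disconnected, the set $\Omega_k^*$ after regrouping is genuinely of the asserted form (absorbing, if one wishes, each multi-component $\Omega_{j_q}^*$ as a single ``block'' scaled by $\alpha_q$). The continuity of eigenvalues and volume in the scale parameters, already used in Lemma \ref{existence_min}, makes the strict-decrease steps legitimate.
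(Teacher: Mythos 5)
Your proposal is correct and follows essentially the same route as the paper: decompose the minimiser into components, argue by a shrink-one-piece/expand-the-others variation that each component attains $\lambda_k^*(\CR)$ at some rank $j_q$ and that each rescaled component must itself realise $\lambda_{j_q}^*(\CR)$ (else replace it), then read off $\alpha_q$ from $\alpha_q^{-2}\lambda_{j_q}^*(\CR)=\lambda_k^*(\CR)$ and the unit-volume constraint to get the sum identity. One parenthetical slip worth fixing: if $\sum_i j_i>k$ you cannot conclude by ``shrinking the whole domain'' (that raises all eigenvalues); the right fix is either to choose the ranks $j_q$ so that they sum to exactly $k$ (possible by the spectral-union structure) or to shrink a single over-counted component while expanding the others, which is the variation you already use elsewhere.
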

\begin{proof}
  If $\lambda_k^*$ is realised by one of the $\Omega_j$, we are done. Suppose it is not. By Lemma \ref{existence_min}, any minimiser for $\lambda_k$ has at most $k$ connected components. One also sees that the largest eigenvalue
smaller or equal to $\lambda_k^*(\CR)$ of each component has to be equal to $\lambda_{k}^*(\CR)$. If not it would be possible to decrease $\lambda_k^*(\CR)$ by shrinking slightly a component whose for which that's not the case at the
expense of expanding slightly the others.
 
In other words, if $\Omega_k^*$ is an optimal domain for $\lambda_k^*(\CR)$,
then each of its $p$ components ($p\leq k$) will have some eigenvalue rank $j_{q}$
 such that
 \[
  \Omega_k^* = \sqcup_{q=1}^{p}\Upsilon_q, \hspace*{1cm} \Upsilon_{q} =
  \alpha_{q}\Omega_{n_q},
 \]
 where
 \begin{equation}
   \sum_{q=1}^p \alpha_q^d = 1, \qquad \qquad \sum_{q=1}^p j_q = k,
 \end{equation}
 and
 \begin{equation}
   \lambda_{j_1}(\Upsilon_1) = \dotso = \lambda_{j_p}(\Upsilon_p) = \lambda_k^*(\CR).
 \end{equation}
 Furthermore, each of these $\Upsilon_q$ realises $\lambda_{j_q}^*$, otherwise it could be replaced by a domain who does while improving the eigenvalue.
The identities between the eigenvalues of the different components may now be written as

\[
  \alpha_{q}^{2} \lambda_{j_{p}}(\Omega_{n_p}) =
  \alpha_{p}^{2}\lambda_{j_{q}}(\Omega_{n,q}),\, q=1,\dotsc,p-1, 
\]
or
\[
  \alpha_{q}^{d} \lambda_{j_{p}}^{d/2}(\Omega_{n_p}) =
  \alpha_{p}^{d}\lambda_{j_{q}}^{d/2}(\Omega_{n_q}), \, q=1,\dotsc,p-1,.
\]
Summing up these identities for $j$ from $1$ to $p-1$,
\[
  \left(\dsum_{q=1}^{p-1} \alpha_{q}^{d}\right)
  \lambda_{j_{p}}^{d/2}(\Omega_{n_p}) =
  \alpha_{p}^{d}\dsum_{q=1}^{p-1}\lambda_{j_{q}}^{d/2}(\Omega_{n_1}).
\]
Hence
\[
 (1-\alpha_{p}^{d})\lambda_{j_{p}}^{d/2}(\Omega) = \alpha_{p}^{d}\dsum_{q =
 1}^{p-1}\lambda_{j_{q}}^{d/2}(\Omega_{n_q})
\]
and
\[
  \alpha_{p}^{d} =
  \fr{\lambda_{j_{p}}^{d/2}(\Omega_{n_p})}{\dsum_{q=1}^{p}\lambda_{j_{q}}^{d/2}(\Omega_{n_q})}.
\]
We finally obtain
\[
\begin{array}{lll}
  \lambda_{k}(\Omega_{k}^{*}) & = & \alpha_{p}^{-2}\lambda_{j_{p}}(\Omega_{n_p})\eqskip
& = & \left(\dsum_{q=1}^{p} \lambda_{j_{q}}^{d/2}(\Omega_{n_q})\right)^{2/d},
\end{array}
\]
yielding the desired result.
\end{proof}

A corresponding statement for Neumann eigenvalues is proved by Poliquin and Roy-Fortin \cite{PRF}
by closely mirroring Wolf and Keller's proof, and the result is recollected and somewhat
generalised by Colbois and El Soufi \cite{CE}. We include their proof in our formalism for completeness.

\begin{lem} \label{prf}
  For every $k \in \N$, and any $\Omega_k^*$ realising $\mu_k^*(\CR)$, there exists a partition $j_1 + \dotso + j_p = k$ such that
  \begin{equation}
    \Omega_k^* = \bigsqcup_{q=1}^p \alpha_q \Omega_{j_q}^*.
  \end{equation}
  Here,
  \begin{equation}
    \alpha_q = \sqrt{\frac{\mu_k^*(\CR)}{\mu_{j_q}^*(\CR)}}.
  \end{equation}
  Furthermore,
  \begin{equation}
    \mu_k^*(\CR)^{d/2} = \sum_{q=1}^p \mu_{j_q}^*(\CR)^{d/2}.
  \end{equation}
\end{lem}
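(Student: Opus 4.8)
The plan is to run the proof of Lemma~\ref{wk} with all the signs reversed: subadditivity is replaced by the superadditivity of Lemma~\ref{super-additivity}, Lemma~\ref{existence_min} by Lemma~\ref{existence_max}, and ``shrink a component while expanding the rest'' by ``dilate a component while contracting the rest''. The one extra ingredient needed to absorb the Neumann zero eigenvalue is the \emph{strict} monotonicity $\mu_m^*(\CR) < \mu_{m+1}^*(\CR)$ for $m \ge 0$, which comes for free: $\mu_m^*(\CR) \le \mu_{m+1}^*(\CR)$ is clear, $\mu_1^*(\CR) \ge \mu_1(\Omega_1) > 0$, and applying superadditivity to the partition $m+1 = m + 1$ gives $\mu_{m+1}^*(\CR)^{d/2} \ge \mu_m^*(\CR)^{d/2} + \mu_1^*(\CR)^{d/2} > \mu_m^*(\CR)^{d/2}$.

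Fix any maximiser $\Omega_k^* = \bigsqcup_{q=1}^p \Upsilon_q$ of $\mu_k$, normalised so that $|\Omega_k^*| = 1$, with $\Upsilon_q = \alpha_q \Omega_{n_q}$; by Lemma~\ref{existence_max} we have $p \le k$, and $\sum_q \alpha_q^d = 1$. Put $\mu^* := \mu_k^*(\CR) = \mu_k(\Omega_k^*) > 0$. If $p = 1$ then $\alpha_1 = 1$, so $\Omega_k^*$ is a generator realising $\mu_k^*(\CR)$ and the statement holds with $j_1 = k$; from now on $p \ge 2$. The first step is to show that $\mu^*$ belongs to the spectrum of \emph{every} $\Upsilon_q$. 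If instead two consecutive eigenvalues of some $\Upsilon_{q_0}$ straddled $\mu^*$, I would perturb $\Omega_k^*$ at unit volume by dilating $\Upsilon_{q_0}$ by a factor slightly greater than $1$ and contracting every other component by a common factor slightly smaller than $1$: for a perturbation small enough, $\Upsilon_{q_0}$ still straddles $\mu^*$, while each contracted component pushes all its eigenvalues equal to $\mu^*$ strictly above $\mu^*$ without moving any eigenvalue across $\mu^*$ from below. Thus the new domain $\Omega^{\#} \in \CR$, still of volume $1$, has exactly $\#\{ j : \mu_j(\Omega_k^*) < \mu^*\}$ eigenvalues $\le \mu^*$, and this number is $\le k$ since $\mu_k(\Omega_k^*) = \mu^*$; hence $\mu_k(\Omega^{\#}) > \mu^* = \mu_k^*(\CR)$, contradicting maximality. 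Granting this, let $j_q$ denote the number of eigenvalues of $\Upsilon_q$ strictly below $\mu^*$. Then $j_q \ge 1$ (since $\mu_0(\Upsilon_q) = 0 < \mu^*$) and $\mu_{j_q}(\Upsilon_q) = \mu^*$; and, the spectrum of $\Omega_k^*$ being the union of those of the $\Upsilon_q$, $j_1 + \dots + j_p$ equals the number of eigenvalues of $\Omega_k^*$ below $\mu^* = \mu_k(\Omega_k^*)$, which is $\le k$. If it were $k - s$ with $s \ge 1$, then $\mu_{k-s}(\Omega_k^*) = \mu^*$, forcing $\mu_{k-s}^*(\CR) = \mu_k^*(\CR)$ and contradicting strict monotonicity (here $k - s \ge p \ge 1$). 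Hence $j_1 + \dots + j_p = k$.

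The second step is to show that each $\Omega_{n_q}$ realises $\mu_{j_q}^*(\CR)$, i.e. $\mu_{j_q}(\Omega_{n_q}) = \mu_{j_q}^*(\CR)$ (the inequality $\le$ is automatic as $|\Omega_{n_q}| = 1$). If it were strict for some $q_0$, pick (Lemma~\ref{existence_max}) some $W \in \CR$ of volume $1$ with $\mu_{j_{q_0}}(W) = \mu_{j_{q_0}}^*(\CR)$ and replace $\Upsilon_{q_0}$ by $\alpha_{q_0}W$: then $\mu_{j_{q_0}}(\alpha_{q_0}W) = \alpha_{q_0}^{-2}\mu_{j_{q_0}}^*(\CR) > \alpha_{q_0}^{-2}\mu_{j_{q_0}}(\Omega_{n_{q_0}}) = \mu^*$, so $\alpha_{q_0}W$ carries at most $j_{q_0}$ eigenvalues $\le \mu^*$. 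Contracting each remaining $\Upsilon_q$ slightly (to push its $\mu^*$-eigenvalue above $\mu^*$) and dilating $\alpha_{q_0}W$ to restore unit volume yields a domain in $\CR$ with at most $j_{q_0} + \sum_{q \ne q_0} j_q = k$ eigenvalues $\le \mu^*$, hence with $\mu_k > \mu^*$ --- impossible. So we may take $\Omega_{n_q} = \Omega_{j_q}^*$, and $\alpha_q$ is then forced by $\mu_{j_q}(\alpha_q \Omega_{j_q}^*) = \mu^*$, i.e. $\alpha_q^{-2} \mu_{j_q}^*(\CR) = \mu_k^*(\CR)$. Summing $\alpha_q^d = \bigl(\mu_{j_q}^*(\CR)/\mu_k^*(\CR)\bigr)^{d/2}$ against $\sum_q \alpha_q^d = 1$ gives $\mu_k^*(\CR)^{d/2} = \sum_{q=1}^p \mu_{j_q}^*(\CR)^{d/2}$.

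The main obstacle is making the two perturbation arguments rigorous: one has to verify that a small volume-preserving rescaling of the components modifies the Neumann eigenvalue counting function $N(\mu;\Upsilon)$ in exactly the bookkeeping way claimed, keeping careful track of the multiplicities of $\mu^*$ and of the zero eigenvalue contributed by each connected component. Once that and the strict monotonicity of $m \mapsto \mu_m^*(\CR)$ are secured, the identity $j_1 + \dots + j_p = k$ and the closing computation are routine.
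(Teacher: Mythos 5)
Your proof is correct and follows essentially the same Wolf--Keller-type strategy as the paper's proof of Lemma \ref{prf}: volume-preserving contraction/dilation of the components to show that each one carries $\mu_k^*(\CR)$ in its spectrum, followed by replacement of a component by a genuine maximiser to show that each normalised component is itself extremal. The only notable variations are that you treat all $p$ components at once (the paper inducts down to $p=2$) and that you obtain $j_1+\dots+j_p=k$ from the strict monotonicity of $m \mapsto \mu_m^*(\CR)$ (a clean consequence of Lemma \ref{super-additivity}) rather than from the paper's eigenfunction count; note also that your derived value $\alpha_q = \left(\mu_{j_q}^*(\CR)/\mu_k^*(\CR)\right)^{1/2}$ is the reciprocal of the formula displayed in the statement, but it is the one consistent with $|\Omega_k^*|=1$ and with the paper's own concluding line, so the discrepancy lies in the statement, not in your argument.
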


\begin{proof}
  Once again, if $\mu_k$ is realised by one of the $\Omega_j$, we are done. A rather simple induction argument reduces the problem to the case $p=2$ and $\Omega_k^* = \Upsilon_1 \sqcup \Upsilon_2$ into two nonempty reunions of connected components, so that $|\Upsilon_1|, |\Upsilon_2| > 0$ and $|\Upsilon_1|+|\Upsilon_2| = |\Upsilon^*_k| =1$.

Choose $k+1$ of the $N(\mu^*_k(\CR), \Omega^*_k)$ lowest and linearly
independent eigenfunctions on $\Omega^*_k$, say $u_0, \dots, u_k$ ordered
according to their eigenvalues, in such a way that every eigenfunction with
eigenvalue strictly smaller than $\mu^*_k(\CR)$ is chosen and that every
eigenfunction is supported in either $\Upsilon_1$ or $\Upsilon_2$.\footnote{Recall
that $u_0$ is necessarily a locally constant function.} We have in particular
$\mu_k(u_k) = \mu_k^*(\CR) \ge \mu_k(\Omega) > 0$, where the last inequality
follows since $\Omega$ is connected. For every $0 \le l \le k$, the function
$u_l$ is not identically zero on at least one of the two $\Upsilon_q$'s; without lost of generality, assume that $u_k$ is not identically zero on $\Upsilon_1$. Notice that if the number of $u_l$'s which are not identically zero on $\Upsilon_1$ is $j_1 + 1$, then $\mu_{j_1}(\Upsilon_1) = \mu_k(u_k)$.

Since the spectrum of $\Omega^*_k = \Upsilon_1 \sqcup \Upsilon_2$ is the (ordered) union of the spectra of $\Upsilon_1$ and $\Upsilon_2$, and since the $u_l$'s span any eigenfunction on $\Omega^*_k$ with eigenvalue strictly smaller than $\mu^*_k(\CR)$, the number of $u_l$'s which are not identically zero on $\Upsilon_2$ is $j_2 = k - j_1$. Considering the $(j_2 + 1)$-th eigenfunction on $\Upsilon_2$ we get $\mu_{j_2}(\Upsilon_2) \ge \mu^*_k(\CR) > 0$; in particular $j_2 \ge 1$. We claim that in fact $\mu^*_k(\CR) = \mu_{j_2}(\Upsilon_2)$; to see this, suppose on the contrary that $\mu^*_k(\CR) < \mu_{j_2}(\Upsilon_2)$. Then consider any sufficiently small deformation $\Omega'$ (with volume $1$) of $\Omega^*_k$ obtained by contracting $\Upsilon_1$ to $\Upsilon'_1$ and dilating $\Upsilon_2$ to $\Upsilon'_2$, so as to have 
\begin{equation}
 \mu_{j_2-1}(\Upsilon'_2) < \mu_{j_2-1}(\Upsilon_2) \le \mu_{j_1}(\Upsilon_1) < \mu_{j_1}(\Upsilon'_1)   < \mu_{j_2}(\Upsilon'_2) < \mu_{j_2}(\Upsilon_2) \; .
\end{equation}
Hence $\mu_k(\Omega') = \mu_{j_1}(\Upsilon'_1)$ and thus $\mu_k(\Omega') > \mu^*_k(\CR)$. This contradicts the maximality of $\Omega^*_k$. As a result $\mu_{j_1}(\Upsilon_1) = \mu_{j_2}(\Upsilon_2) = \mu^*_k(\CR) > 0$. Since $\mu_j(D) > 0$ if and only if $j \ge \nu(D)$, we deduce $j_i \ge \nu(\Upsilon_i) \ge 1$. That we have a partition follows from $j_2 := k - j_1$.

We claim that the normalised domain $|\Upsilon_1|^{-1/d} \Upsilon_1$ realises $\mu_{j_1}^*(\CR)$. Suppose differently: There exists a maximiser $\Omega^*_{j_1}$ (with volume $1$) such that $\mu_{j_1}(|\Upsilon_1|^{-1/d} \Upsilon_1) \,  < \,  \mu_{j_1}(\Omega^*_{j_1}) = \mu^*_{j_1}(\CR)$, from which it follows that
\begin{equation}\label{ineqmu}
\mu^*_k(\CR) = \mu_{j_1}(\Upsilon_1) = |\Upsilon_1|^{-2/d} \mu_{j_1}(|\Upsilon_1|^{-1/d}\Upsilon_1) < |\Upsilon_1|^{-2/d} \mu_{j_1}^*(\CR) \, .
\end{equation}
Consider the domain
\begin{equation}
\tilde{\Omega} = \tilde{\Upsilon}_1 \sqcup \Upsilon_2 = \left( \frac{\mu_{j_1}^*(\CR)}{\mu^*_k(\CR)} \right)^{1/2}  \Omega^*_{j_1} \sqcup \Upsilon_2 \, .
\end{equation}
Equation \eqref{ineqmu} implies that its volume is strictly greater than $|\Upsilon_1||\Omega^*_{j_1}| + | \Upsilon_2 | = 1$. The $j_1+1$ first eigenvalues coming from $\tilde{\Upsilon}_1$ have eigenvalue at most $\mu^*_k(\CR)$, the $(j_1+1)$-th eigenvalue $\mu_{j_1}(\tilde{\Upsilon}_1)$ being equal to this value. Together with the same $j_2 = k - j_1$ eigenfunctions on $\Upsilon_2$ as before, we deduce that $\mu_k(\tilde{\Omega}) = \mu^*_k(\CR)$. Therefore the $(k+1)$-th eigenvalue of the normalised domain $|\tilde{\Omega}|^{-1/d} \tilde{\Omega}$ is strictly larger than $\mu^*_k(\CR)$, which is a contradiction to the maximality of $\Omega^*_k$. A similar argument implies that the normalised domain $|\Upsilon_2|^{-1/d} \Upsilon_2$ realises $\mu_{j_2}^*(\CR)$. Incidentally, $|\Upsilon_i| = (\mu^*_{j_i}(\CR)/\mu^*_k(\CR))^{d/2}$.

\end{proof}



\section{A Trichotomy}\label{sec:trichotomy}

In this section, we set out to prove Theorem \ref{thm:trichotomy}. Note that all
of the results of the previous sections have a Dirichlet and Neumann
version, where the only difference is that the inequalities are reversed. As
such, we will only prove the Dirichlet case of Theorem \ref{thm:trichotomy}, and
only state the corollaries in term of the Dirichlet eigenvalues. However, since
we rely only on the formal properties obtained in the previous section, all the
results also apply for Neumann eigenvalues, reversing the inequalities when
needed and changing the proofs \emph{mutatis mutandis}.
\vspace{6pt}

We start with the following proposition, allowing us to consider classes of domains $\CR$ generated by a single domain $\Omega$.

\begin{prop}
\label{prop:single}
 If P\'olya's conjecture holds within $\CR(\Omega_1)$ and $\CR(\Omega_2)$, then
 it holds within $\CR(\Omega_1,\Omega_2)$. The same is true of the strong
 P\'olya conjecture.
\end{prop}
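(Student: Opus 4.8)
The plan is to reduce Pólya's conjecture in the three-generator family to its validity in the single-generator families by exploiting the scaling formulation obtained in Corollary \ref{BLY}. Recall that Pólya's conjecture holds in $\CR(\Omega_1)$, respectively $\CR(\Omega_2)$, if and only if
\begin{equation}
L(\CR(\Omega_1)) = \frac{(2\pi)^d}{\omega_d} = L(\CR(\Omega_2)),
\end{equation}
where $L(\CR) = \inf_k \lambda_k^*(\CR)^{d/2}/k$; by the Berezin--Li--Yau bound one always has $L(\CR) \ge (d/(d+2))^{d/2}(2\pi)^d/\omega_d$, but more importantly, testing with a single scaled copy of a generator together with Weyl's law gives $L(\CR(\Omega_i)) \le (2\pi)^d/\omega_d$, so the conjecture in $\CR(\Omega_i)$ is exactly the statement that this upper bound is attained in the limit. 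So it suffices to show $L(\CR(\Omega_1,\Omega_2)) \ge (2\pi)^d/\omega_d$; the reverse inequality again follows from Weyl's law applied to a single scaled copy of $\Omega_1$.

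First I would record the monotonicity $\lambda_k^*(\CR(\Omega_1,\Omega_2)) \le \min\{\lambda_k^*(\CR(\Omega_1)),\lambda_k^*(\CR(\Omega_2))\}$, since $\CR(\Omega_i) \subset \CR(\Omega_1,\Omega_2)$; this alone is not enough, as the minimum of two subadditive sequences need not be subadditive with the right constant. Instead, the key step is to combine the Wolf--Keller decomposition for the larger family (Lemma \ref{wk}, which applies to $\CR(\Omega_1,\Omega_2)$ verbatim) with the single-generator limits. Given $k$, take a minimiser $\Omega_k^*$ for $\lambda_k^*(\CR(\Omega_1,\Omega_2))$; by Lemma \ref{existence_min} it has $p \le k$ connected components, each of the form $\alpha_q\Omega_{m_q}$ with $m_q \in \{1,2\}$, and by the argument in Lemma \ref{wk} each normalised component $\alpha_q^{-1}\cdot(\text{that component})$ — which lies in one of the single-generator families $\CR(\Omega_{m_q})$ — realises $\lambda_{j_q}^*(\CR(\Omega_{m_q}))$ for its rank $j_q$, with $\sum j_q = k$ and
\begin{equation}
\lambda_k^*(\CR(\Omega_1,\Omega_2))^{d/2} = \sum_{q=1}^p \lambda_{j_q}^*(\CR(\Omega_{m_q}))^{d/2}.
\end{equation}
Now use that for each $i$, $\lambda_{j}^*(\CR(\Omega_i))^{d/2} \ge L(\CR(\Omega_i))\, j = \frac{(2\pi)^d}{\omega_d} j$ for all $j$ (this is the infimum characterisation in Corollary \ref{BLY} together with the hypothesis that Pólya holds in $\CR(\Omega_i)$). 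Summing, $\lambda_k^*(\CR(\Omega_1,\Omega_2))^{d/2} \ge \frac{(2\pi)^d}{\omega_d}\sum_q j_q = \frac{(2\pi)^d}{\omega_d} k$, which is exactly Pólya's inequality for rank $k$ in $\CR(\Omega_1,\Omega_2)$; since $k$ was arbitrary this proves the ordinary conjecture.

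For the strong version, one argues similarly but must upgrade the non-strict inequality to a strict one: strong Pólya in $\CR(\Omega_1,\Omega_2)$ means $\lambda_k^*(\CR(\Omega_1,\Omega_2)) > \frac{(2\pi)^2}{(\omega_d)^{2/d}} k^{2/d}$, i.e.\ $\lambda_k^*(\CR(\Omega_1,\Omega_2))^{d/2} > \frac{(2\pi)^d}{\omega_d}k$, for every $k$. In the Wolf--Keller decomposition above, strong Pólya in each $\CR(\Omega_{m_q})$ gives $\lambda_{j_q}^*(\CR(\Omega_{m_q}))^{d/2} > \frac{(2\pi)^d}{\omega_d}j_q$ for every $j_q \ge 1$, and since $p \ge 1$ the sum of strict inequalities is strict, giving the result; the Neumann cases are identical with inequalities reversed, using Lemma \ref{prf} and Corollary \ref{BLY} in place of Lemma \ref{wk}. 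The one point needing care — and the main potential obstacle — is the claim that each normalised component of the minimiser $\Omega_k^*$ genuinely realises the single-generator extremal value $\lambda_{j_q}^*(\CR(\Omega_{m_q}))$ rather than merely the mixed-family value; this is precisely what the exchange argument in the proof of Lemma \ref{wk} establishes (if a component did not realise its single-generator optimum, replace it by one that does, strictly decreasing $\lambda_k$ of the union after renormalisation, contradicting optimality), so it transfers without change, but it should be stated explicitly since it is where the hypothesis on the two single-generator families is actually consumed.
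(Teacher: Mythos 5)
Your proof is correct, but it takes a genuinely different route from the paper's. The paper observes that every $\Upsilon \in \CR(\Omega_1,\Omega_2)$ splits as $\Upsilon_1 \sqcup \Upsilon_2$ with $\Upsilon_i \in \CR(\Omega_i)$, and then proves an elementary merging lemma: if two non-decreasing sequences satisfy $a_k \ge k/A$ and $b_k \ge k/B$, then their merged rearrangement satisfies $c_k \ge k/(A+B)$; applied with $a_k = \lambda_k(\Upsilon_1)^{d/2}$, $b_k = \lambda_k(\Upsilon_2)^{d/2}$, $A = \omega_d\abs{\Upsilon_1}/(2\pi)^d$ and $B = \omega_d\abs{\Upsilon_2}/(2\pi)^d$, this is exactly P\'olya's inequality for the union, since the spectrum of a disjoint union is the merged spectrum. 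That argument needs neither existence of extremisers nor the Wolf--Keller decomposition, and it proves the sharper pointwise statement that P\'olya for $\Upsilon_1$ and for $\Upsilon_2$ implies P\'olya for $\Upsilon_1\sqcup\Upsilon_2$. Your route instead runs everything through the extremal values, invoking Lemma \ref{existence_min}, Lemma \ref{wk} and Corollary \ref{BLY} for the two-generator family; this is legitimate, as those results are established in Section \ref{sec:suitable} for arbitrary finite generating sets, so there is no circularity. Note that the caveat at the end of your argument is not actually needed: each normalised component of $\Omega_k^*$ is a single scaled copy of some $\Omega_{m_q}$, which itself belongs to $\CR(\Omega_{m_q})$, so the hypothesis applies to it directly --- all you need is $\lambda_{j_q}(\Omega_{m_q})^{d/2} \ge (2\pi)^d j_q/\omega_d$, not the identification of $\lambda_{j_q}^*(\CR(\Omega_{m_q}))$ with the mixed-family optimum (although that identification is indeed correct, by the sandwich $\lambda_{j_q}^*(\CR) \le \lambda_{j_q}^*(\CR(\Omega_{m_q})) \le \lambda_{j_q}(\Omega_{m_q}) = \lambda_{j_q}^*(\CR)$). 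Your upgrade to strict inequalities for the strong version is sound because the extremal values are attained. The trade-off: the paper's argument is lighter and local (domain by domain), while yours leans on the structure theory but integrates naturally with the rest of Section \ref{sec:trichotomy}.
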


It is clear that it is sufficient to show that if P\'olya's conjecture holds for
two domains $\Upsilon_1 \in \CR(\Omega_1)$ and $\Upsilon_2 \in \CR(\Omega_2)$,
then it holds for the disjoint union of these
two domains $\Upsilon_1 \sqcup \Upsilon_2$. This will rely on the following abstract lemma about superlinear sequences.

\begin{lem}
 Let $\set{a_k : k \in \N}$ and $\set{b_k : k \in \N}$ be two increasing sequences satisfying
 $$ a_k \ge \frac k A \qquad \text{and} \qquad b_k \ge \frac k B$$
 for some $A,B>0$.
 Denote $c_k$ the sequence obtained as the arrangement in increasing order of all elements in $\set{a_k} \sqcup \set{b_k}$, repeated with multiplicity. Then,
 \begin{equation} \label{eq:eqc} c_k \ge \frac{k}{A+B}.\end{equation}
 The same holds when all inequalities are replaced with strict inequalities.
\end{lem}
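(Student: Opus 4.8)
The plan is to run an eigenvalue-counting argument of exactly the type already used in the proofs of Lemmas \ref{subadditivity} and \ref{prf}. For a nondecreasing sequence $\set{x_k}$ and $t \in \R$, set $N_x(t) := \#\set{k \in \N : x_k \le t}$. Since by definition $\set{c_k}$ is the increasing rearrangement with multiplicity of the disjoint union $\set{a_k} \sqcup \set{b_k}$, one has the exact identity $N_c(t) = N_a(t) + N_b(t)$ for every $t \in \R$; this is nothing more than the statement that merging two sorted lists interleaves their sublevel sets. I would also record at the outset that $a_1 \ge 1/A > 0$ and $b_1 \ge 1/B > 0$, so every $c_k$ is positive and no division causes trouble.

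The first step is to bound $N_a$ and $N_b$ from above using the hypotheses. If $a_k \le t$, then $k/A \le a_k \le t$, hence $k \le At$; consequently every index contributing to $N_a(t)$ lies in $\set{1,\dots,\fl{At}}$, so $N_a(t) \le \fl{At} \le At$, and likewise $N_b(t) \le Bt$. Adding and using the identity above gives $N_c(t) \le (A+B)\,t$ for every $t > 0$. The second step is simply to evaluate this at $t = c_k$: since $c_1 \le \dots \le c_k \le c_k$, we have $N_c(c_k) \ge k$, whence $k \le N_c(c_k) \le (A+B)\,c_k$, which is precisely \eqref{eq:eqc}. For the strict version one argues identically: replacing $a_k \ge k/A$ by $a_k > k/A$ turns the implication into $a_k \le t \Rightarrow k < At$, so that $N_a(t)$ counts only indices in $\set{1,\dots,\ceil{At}-1}$ and hence $N_a(t) \le \ceil{At}-1 < At$, using the elementary fact that $\#\set{k \in \N : k < x} < x$ for every real $x > 0$; likewise $N_b(t) < Bt$, so $N_c(t) < (A+B)\,t$, and evaluating at $t = c_k$ gives $k \le N_c(c_k) < (A+B)\,c_k$, i.e. $c_k > k/(A+B)$.

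I do not expect a genuine obstacle here: once the counting-function identity $N_c = N_a + N_b$ is in hand, everything else is routine. The only point that demands a little care is the bookkeeping in the strict case — ensuring that the strict inequality $k < At$ forces $N_a(t) < At$ and not merely $N_a(t) \le At$ — which is exactly where the bound $\#\set{k \in \N : k < x} < x$ is invoked. (One should also keep in mind which indexing convention for $\N$ is in force; with the convention $\N = \set{1,2,\dots}$ used throughout for the abstract statements the argument above is literal, and the Neumann-eigenvalue application is obtained by reversing all inequalities \emph{mutatis mutandis} as indicated in Section \ref{sec:trichotomy}.)
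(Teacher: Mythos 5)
Your proof is correct. It is a counting-function formulation of essentially the same underlying estimate as the paper's proof, but the packaging is genuinely different: the paper argues directly on the $k$-th term, assuming without loss of generality that $c_k = a_p$ for some $1 \le p \le k$, observing that then $b_{k-p} \le a_p$, and concluding via the convex-combination chain $\tfrac{k}{A+B} = \tfrac{p+(k-p)}{A+B} \le \tfrac{Aa_p + Bb_{k-p}}{A+B} \le a_p = c_k$ (with a separate trivial case $p = k$). Your identity $N_c(c_k) = N_a(c_k) + N_b(c_k) \ge k$ together with the sublevel bounds $N_a(t) \le At$ and $N_b(t) \le Bt$ encodes exactly the same two inequalities $p \le A a_p$ and $k-p \le B b_{k-p}$, but without having to name $p$, identify which sequence $c_k$ comes from, or split into cases. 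The main thing your route buys is a cleaner and more uniform treatment of the strict version: the elementary bound $\#\set{k \in \N : k < x} < x$ does all the work at once, whereas in the paper's argument one has to check separately where strictness enters in each case (in the case $p=k$ it comes for free from $A < A+B$, in the other case from the middle inequality). Both arguments are complete; there is no gap in yours.
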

\begin{proof}
  Without loss of generality, we assume that $c_k = a_p$ for some $1 \le p \le
  k$. We distinguish two cases : $p = k$ and $1 \le p < k$. In the former
  situation, we have that
  \begin{equation}
    c_k = a_k \ge \frac k A > \frac{k}{A+B}.
  \end{equation}
  In the second case, it follows that $a_p \ge b_j$ for all $j$, $1 \le j \le
  k-p$. We then have
 \begin{align}
   \label{eq:str}
   \frac{k}{A + B} &= \frac{p + (k-p)}{A+B} \\
 &\le \frac{A a_p}{A+B}  + \frac{B b_{k-p} }{A+B} \\
 &\le a_p = c_k,
 \end{align} 
 where the last line holds from the fact that $a_p \ge
 \max\set{a_p,b_{k-p}}$, hence it is also greater than any convex combination of
 both. This concludes the proof, and it is readily seen that if the inequalities
 in the statement of the lemma were strict, then the second line in
 \eqref{eq:str} would be a strict inequality.
\end{proof}
To prove Proposition \ref{prop:single},
 apply the previous lemma with $a_k = \lambda_k(\Omega_1)^{d/2}$, $b_k =
 \lambda_k(\Omega_2)^{d/2}$, $a = \frac{\omega_d \abs{\Omega_1}}{(2\pi)^d}$, and
 $b = \frac{\omega_d \abs{\Omega_2}}{(2\pi)^d}$.

 Let us now define the set $J\subset \N$ of indices where the generator $\Omega$ realises $\lambda_k^*(\CR)$, that is
 \begin{equation}
   J:= \set{k \subset \N : \Omega = \Omega_k^*}.
 \end{equation}

\begin{prop} \label{Infinite}
Suppose $J$ is infinite, so that there exists a sequence $j_1 < j_2 < \dots \nearrow +\infty$ such that $\Omega = \Omega_{j_{n}}^*(\CR)$ for all $n$. Then P\'olya's conjecture is true for every $\Upsilon \in \CR$.
\end{prop}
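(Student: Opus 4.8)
The plan is to leverage the identity $\lambda_k^*(\CR)^{d/2} = \sum_{q=1}^p \lambda_{j_q}^*(\CR)^{d/2}$ from Lemma \ref{wk} together with the fact that, by Corollary \ref{BLY}, P\'olya's conjecture in $\CR$ is equivalent to $L = \lim_{k\to\infty} \lambda_k^*(\CR)^{d/2}/k = (2\pi)^d/\omega_d$, and that it always holds that $L \ge (2\pi)^d \rho / \omega_d \ge$ the Li--Yau constant, while $L \le (2\pi)^d/\omega_d$ would have to be proven. Actually the one inequality we always have for free is $L \le \lim_{k\to\infty}\lambda_k(\Omega)^{d/2}/k = (2\pi)^d/\omega_d$, since $\Omega$ itself is always a competitor in $\CR$ (with $\lambda_k^*(\CR) \le \lambda_k(\Omega)$ after normalising volume to $1$, which it already is). So the whole content is to show $L \ge (2\pi)^d/\omega_d$ under the hypothesis that $J$ is infinite.

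First I would observe that for each $n$, since $\Omega = \Omega_{j_n}^*$, we have $\lambda_{j_n}^*(\CR) = \lambda_{j_n}(\Omega)$, hence
\begin{equation}
  \frac{\lambda_{j_n}^*(\CR)^{d/2}}{j_n} = \frac{\lambda_{j_n}(\Omega)^{d/2}}{j_n} \longrightarrow \frac{(2\pi)^d}{\omega_d}
\end{equation}
as $n \to \infty$, by Weyl's law for $\Omega$. But by Corollary \ref{BLY} the full sequence $\lambda_k^*(\CR)^{d/2}/k$ converges to $L$, so in particular the subsequence indexed by $j_n$ converges to $L$ as well; combining these two limits gives $L = (2\pi)^d/\omega_d$ immediately. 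This already yields $\lambda_k^*(\CR)^{d/2}/k \to (2\pi)^d/\omega_d$, which is P\'olya's conjecture within $\CR$ in the limiting/averaged sense. The final step is to pass from this limit statement to the pointwise inequality $\lambda_k(\Upsilon) \ge \frac{4\pi^2}{(\omega_d|\Upsilon|)^{2/d}} k^{2/d}$ for every individual $\Upsilon \in \CR$ and every $k$: this is exactly the equivalence recorded after Corollary \ref{BLY} (P\'olya's conjecture $\Leftrightarrow L = (2\pi)^d/\omega_d$), which itself follows because $\lambda_k^*(\CR)^{d/2}/k = \inf_k \lambda_k^*(\CR)^{d/2}/k = L$ by subadditivity (Lemma \ref{subadditivity}) and Fekete, so $\lambda_k^*(\CR)^{d/2} \ge Lk = \frac{(2\pi)^d}{\omega_d}k$ for all $k$, i.e. $\lambda_k(\Upsilon) \ge \lambda_k^*(\CR) \ge \frac{4\pi^2}{\omega_d^{2/d}}k^{2/d}$ for all volume-one $\Upsilon$, and then scaling handles general volume.

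The step I expect to require the most care is not any single inequality but making sure the logic is airtight: specifically, that $\lambda_{j_n}(\Omega)^{d/2}/j_n \to (2\pi)^d/\omega_d$ really does hold along an \emph{arbitrary} increasing subsequence $j_n$ (it does, since it is the limit of the full sequence by Weyl), and that $\Omega$ realising $\lambda_{j_n}^*(\CR)$ genuinely forces $\lambda_{j_n}^*(\CR) = \lambda_{j_n}(\Omega)$ with $|\Omega| = 1$ (true by the normalisation convention that generators have volume $1$ and extremisers have volume exactly $1$). Given those, no estimate is delicate — the proof is essentially: two convergent sequences sharing a common subsequence must have the same limit, and then invoke the already-established equivalence between the value of $L$ and P\'olya's conjecture in $\CR$. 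I would also remark, as the paper does, that the Neumann case is identical with all inequalities reversed and Weyl's law for $\mu_k$ in place of that for $\lambda_k$.
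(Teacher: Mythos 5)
Your proposal is correct and follows essentially the same route as the paper: use Weyl's law for $\Omega$ along the subsequence $j_n$ where $\Omega$ realises $\lambda_{j_n}^*(\CR)$, combine with Corollary \ref{BLY} (the full sequence $\lambda_k^*(\CR)^{d/2}/k$ converges to its infimum $L$) to conclude $L = (2\pi)^d/\omega_d$, and then deduce the pointwise inequality for every $\Upsilon\in\CR$ from $\lambda_k^*(\CR)^{d/2}\ge Lk$. No gaps.
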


\begin{proof}
 On the one hand, Weyl's law implies
 \begin{equation}
  \lim_{n \to \infty} \frac{\lambda_{j_n}(\Omega)}{j_n^{2/d}} =  \frac{4\pi^2}{ \omega_d^{2/d}} \, .
 \end{equation}
On the other hand, since $\Omega$ realises $\lambda_{j_n}^*(\CR)$ for every $n$, it follows from Corollary \ref{BLY}
\begin{equation}
 \lim_{n \to \infty} \frac{\lambda_{j_n}(\Omega)}{j_n^{2/d}} = \inf_{k}\frac{\lambda_k^*(\CR)}{k^{2/d}}= \frac{4\pi^2}{ \omega_d^{2/d}} \, .
\end{equation}
We therefore conclude that $\lambda_k(\Upsilon)^{d/2} \, k^{-1} \ge (2\pi)^d \omega_d^{-1}$ for every $\Upsilon \in \CR$ with volume $1$, which is P\'olya's conjecture.
\end{proof} \vspace{6pt}

The following theorem characterises when $J$ is finite.
\begin{thm} \label{smallo}
The set $J$ is finite if and only if there exists a constant $c$ such that for
all $k$, $\nu(\Omega^*_{k}) \ge ck$.
\end{thm}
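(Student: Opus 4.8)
The plan is to prove both implications via the Wolf--Keller type decomposition in Lemma \ref{wk} together with the trichotomy of behaviours for $\lambda_k^*(\CR)^{d/2}/k$ coming from Corollary \ref{BLY} and Proposition \ref{Infinite}. For the easy direction, suppose $J$ is infinite; then by Proposition \ref{Infinite} P\'olya's conjecture holds in $\CR$, so $L = \inf_k \lambda_k^*(\CR)^{d/2}/k = (2\pi)^d/\omega_d$, and moreover the generator $\Omega$ itself realises $\lambda_{j_n}^*(\CR)$ for a sequence $j_n \to \infty$, for which $\nu(\Omega) = 1$. Hence $\nu(\Omega_k^*) = 1$ for infinitely many $k$, so there is no constant $c > 0$ with $\nu(\Omega_k^*) \ge ck$ for all $k$. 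This gives the contrapositive of one direction.

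For the converse, suppose $J$ is finite; I want to produce a constant $c > 0$ with $\nu(\Omega_k^*) \ge ck$ for all large $k$ (and then shrink $c$ to handle the finitely many remaining $k$). The key point is a quantitative ``defect'' estimate: if $\Omega_k^*$ has few components, then one of the components must be a rescaled copy of $\Omega_{j}^*$ for small $j$, and for $j \notin J$ (equivalently, $j$ large once $J$ is finite, or $j$ among finitely many exceptions) one has $\lambda_j^*(\CR)^{d/2}/j$ strictly larger than $L$ by a definite amount. Concretely, write $\Omega_k^* = \bigsqcup_{q=1}^p \alpha_q \Omega_{j_q}^*$ as in Lemma \ref{wk}, with $\lambda_k^*(\CR)^{d/2} = \sum_{q=1}^p \lambda_{j_q}^*(\CR)^{d/2}$ and $\sum j_q = k$. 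Since $J$ is finite, there is $k_0$ such that $\Omega_j^* \ne \Omega$ for all $j \ge k_0$; I would split the indices $j_q$ into those $< k_0$ and those $\ge k_0$. For the ``small'' block, the number of such $q$ is at most $p = \nu(\Omega_k^*)$ and each contributes a bounded amount to $k = \sum j_q$, namely at most $k_0 - 1$; so the total $k$ contributed by small blocks is at most $(k_0-1)\,\nu(\Omega_k^*)$. If $\nu(\Omega_k^*)$ were $o(k)$ along a subsequence, then along that subsequence most of the ``mass'' $k$ would come from blocks with $j_q \ge k_0$, each of which, recursively decomposed, would eventually have to bottom out at a copy of the generator $\Omega$ itself (since the $\Omega_j$ are connected generators and the decomposition terminates) --- but $\Omega = \Omega_j^*$ only for $j \in J$, a finite set, contradiction. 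Making this precise is the main work.

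The main obstacle is turning the recursive decomposition into a clean inequality. Here is the cleanest route I see. By iterating Lemma \ref{wk} (and Lemma \ref{existence_min}, which bounds the number of components by the rank), any $\Omega_k^*$ can be written as $\bigsqcup_{q=1}^{p} \alpha_q \Omega_{j_q}^*$ where now I insist that each $\Omega_{j_q}^*$ is \emph{indecomposable} in the sense of Lemma \ref{wk}, i.e. $\Omega_{j_q}^* = \Omega$ (so $j_q \in J$) or $\Omega_{j_q}^*$ is a single generator with $\lambda_{j_q}^*(\CR) = \lambda_{j_q}(\Omega)$, which again forces $j_q \in J$ (one must check the base cases of the recursion always land on $j \in J$ or on $j$ with $\Omega_j^* = \Omega$). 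Wait: more carefully, the recursion terminates exactly when a component \emph{is} the generator, i.e. when its rank lies in $J$. So every $k$ admits a partition $k = \sum_{q=1}^{p} j_q$ with all $j_q \in J$ and with $p \le \nu(\Omega_k^*)$, \emph{and} with $\lambda_k^*(\CR)^{d/2} = \sum_q \lambda_{j_q}^*(\CR)^{d/2} = \sum_q \lambda_{j_q}(\Omega)^{d/2}$. Now if $J$ is finite, say $J \subset \{1, \dots, m\}$, then $k = \sum_q j_q \le m\,p \le m\,\nu(\Omega_k^*)$, giving exactly $\nu(\Omega_k^*) \ge k/m =: ck$ with $c = 1/m > 0$. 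Conversely this same identity shows: if such a partition with $j_q \in J$ exists for arbitrarily large $k$ with $p = o(k)$, then... actually for the ``only if'' direction above the argument was already complete, so the content here is just verifying that the recursion always terminates at indices in $J$ (it must, since a component with more than one connected piece is, by Lemma \ref{wk}, further decomposable, and a component with one piece equal to some $\Omega_i$ realising $\lambda_j^*$ means $j \in J$). So the genuine obstacle is the bookkeeping: proving by strong induction on $k$ that $\Omega_k^*$ induces a partition of $k$ into parts all lying in $J$ with number of parts $\le \nu(\Omega_k^*)$ --- the induction step uses Lemma \ref{wk} to split off nontrivial components and recurse, and uses that $\nu$ is additive over disjoint unions while $p \le \nu(\Omega_k^*)$. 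Once that lemma is in hand, both directions of the theorem are immediate: $J$ finite $\iff$ parts bounded $\iff \nu(\Omega_k^*) \ge ck$. I would write the induction as the core of the proof and flag it as the only nonroutine step.
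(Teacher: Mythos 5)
Your proof is correct and follows essentially the same route as the paper: both directions hinge on the Wolf--Keller decomposition of Lemma \ref{wk}, which (since each connected component of $\Omega_k^*$ is a scaled copy of the single generator realising some $\lambda_{j_q}^*$) forces every part $j_q$ of the induced partition of $k$ to lie in $J$; no genuine recursion or induction is needed beyond this one-step observation, so the ``main obstacle'' you flag is lighter than you fear. The one place you diverge is the final counting step: the paper converts the component count into a linear lower bound via the eigenvalue identity $\lambda_k^*(\CR)^{d/2}=\sum_{j\in J} n_{k,j}\lambda_j^*(\CR)^{d/2}$ together with the Berezin--Li--Yau bound $\lambda_k^*(\CR)^{d/2}\ge c'k$ from Corollary \ref{BLY}, whereas you use the rank identity $k=\sum_q j_q\le (\max J)\,\nu(\Omega_k^*)$ directly, which is more elementary and dispenses with Corollary \ref{BLY} entirely. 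Both yield an explicit constant ($c=c'\lambda_{\max J}(\Omega)^{-d/2}$ in the paper, $c=1/\max J$ in yours), and your easy direction (the generator itself is a one-component minimiser infinitely often) is exactly the paper's.
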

\begin{proof}
  If $J$ is infinite, it is clear that such a constant $c$ does not exist. 
  Conversely, suppose that the set $J = \{ \, k \in \mathbb{N} \, : \, \Omega =
\Omega_{k}^*(\CR) \, \}$ is finite. This implies that any minimiser realising $\lambda_k^*(\CR)$ is of the form
\begin{equation}
  \Omega_k^* = \bigsqcup_{j \in J} \, \bigsqcup_{m=1}^{n_{k,j}} \, r_{k,j}
  \Omega_{j}^*.
\end{equation}
The number of connected components of $\Omega_k^*$ is
\begin{equation}
  \label{eq:cc}
  \nu(\Omega_{k}^*) =  \sum_{j \in J} n_{k, j},
\end{equation}
and referring to Lemma \ref{wk} we get
\begin{equation}
  \label{eq:ev}
  \lambda_k^*(\CR)^{d/2} = \sum_{j \in J} n_{k,j} \lambda_{j}^*(\CR)^{d/2}.
\end{equation}
Corollary \ref{BLY} states that there is a constant $c$ such that
$\lambda_{k}^*(\CR)^{d/2} \ge c' k$. Let $j' = \max J$, combining \eqref{eq:cc} and
\eqref{eq:ev} we obtain
\begin{equation}
  \begin{aligned}
    \nu(\Omega_k^*) &\ge \frac{1}{\lambda_{j'}(\Omega)^{d/2}}
    \sum_{j \in J} n_{k,j} \lambda_j^*(\CR)^{d/2} \\ 
    &\ge \frac{c'}{\lambda_{j'}(\Omega)^{d/2}} k.
  \end{aligned}
\end{equation}
The proof is completed by taking $c = c'\lambda_{j'}(\Omega)^{-d/2}$.
\end{proof}

Considering that all known results in the literature point to the validity of
P\'olya's conjecture, we are thus naturally led to the following, stronger,
conjecture. 

\begin{open} \label{subsequence}
For every domain $\Omega \subset \R^d$ there exists a subsequence
$\lambda_{k_n}^*(\CR(\Omega))$, with minimisers $\Omega_{k_n}^*$ such that
 \begin{equation}
  \nu(\Omega_{k_n}) = \smallo{k_n}.
 \end{equation}

\end{open}\vspace{6pt}

\noindent That this open problem is a potentially strictly stronger statement than P\'olya's conjecture
follows from this partial converse to \ref{Infinite} .\vspace{6pt}

\begin{prop} \label{Minimum}
 Suppose $J \subset \mathbb{N}$ is finite. Then,
 \begin{equation}
  \inf_{k} \frac{\lambda_k^*(\CR)^{d/2}}{k} = \min_{j \in J} \frac{\lambda_{j}(\Omega)^{d/2}}{j} \le \frac{(2\pi)^d}{\omega_d}.
 \end{equation}
\end{prop}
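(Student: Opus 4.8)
The plan is to combine the two facts that have already been established when $J$ is finite: Corollary~\ref{BLY}, which identifies $\inf_k \lambda_k^*(\CR)^{d/2}/k$ with the limit, and the decomposition from Theorem~\ref{smallo}, which forces any minimiser $\Omega_k^*$ to be a disjoint union of scaled copies of the finitely many generators $\Omega_j^*$ with $j \in J$. First I would fix $k$ and write, using Lemma~\ref{wk} together with the structure $\Omega_k^* = \bigsqcup_{j \in J} \bigsqcup_{m=1}^{n_{k,j}} r_{k,j}\Omega_j^*$ from the proof of Theorem~\ref{smallo}, the identity
\begin{equation}
  \lambda_k^*(\CR)^{d/2} = \sum_{j \in J} n_{k,j}\, \lambda_j^*(\CR)^{d/2}, \qquad k = \sum_{j \in J} n_{k,j}\, j.
\end{equation}
Dividing the first by the second and observing that a ratio $\bigl(\sum_j n_{k,j} a_j\bigr)/\bigl(\sum_j n_{k,j} b_j\bigr)$ of weighted sums (with $a_j = \lambda_j^*(\CR)^{d/2}$, $b_j = j > 0$, and nonnegative weights $n_{k,j}$) is always at least $\min_{j \in J} a_j/b_j$, we get
\begin{equation}
  \frac{\lambda_k^*(\CR)^{d/2}}{k} \ge \min_{j \in J} \frac{\lambda_j^*(\CR)^{d/2}}{j} = \min_{j \in J}\frac{\lambda_j(\Omega)^{d/2}}{j},
\end{equation}
where the last equality uses that for $j \in J$ the generator $\Omega$ itself realises $\lambda_j^*(\CR)$. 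Taking the infimum over $k$ gives $\inf_k \lambda_k^*(\CR)^{d/2}/k \ge \min_{j\in J}\lambda_j(\Omega)^{d/2}/j$. For the reverse inequality, note that each $j \in J$ is itself a legitimate index, so $\inf_k \lambda_k^*(\CR)^{d/2}/k \le \lambda_j^*(\CR)^{d/2}/j = \lambda_j(\Omega)^{d/2}/j$ for every $j \in J$, hence $\inf_k \lambda_k^*(\CR)^{d/2}/k \le \min_{j \in J}\lambda_j(\Omega)^{d/2}/j$. This yields the claimed equality.

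It remains to check the final inequality $\min_{j \in J}\lambda_j(\Omega)^{d/2}/j \le (2\pi)^d/\omega_d$. For this I would use Corollary~\ref{BLY}, which says the infimum equals the limit $L = \lim_{k\to\infty}\lambda_k^*(\CR)^{d/2}/k$, together with the crude upper bound $\lambda_k^*(\CR) \le \lambda_k(\Omega)$ (since $\Omega$ is an admissible competitor of volume $1$) and Weyl's law $\lambda_k(\Omega)^{d/2}/k \to (2\pi)^d/\omega_d$. Combining, $L \le \lim_{k\to\infty}\lambda_k(\Omega)^{d/2}/k = (2\pi)^d/\omega_d$, and since we have just shown $L = \min_{j\in J}\lambda_j(\Omega)^{d/2}/j$, this finishes the proof.

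The only mildly delicate point is the weighted-ratio inequality: one must make sure that at least one $n_{k,j}$ is positive (true since $\Omega_k^*$ is nonempty of volume $1$) and that the $j \in J$ appearing really do have $\lambda_j^*(\CR)^{d/2}/j = \lambda_j(\Omega)^{d/2}/j$ — both follow directly from the definition of $J$ and the decomposition recorded in the proof of Theorem~\ref{smallo}, so there is no real obstacle. Everything else is bookkeeping with Corollary~\ref{BLY}, Lemma~\ref{wk}, and Weyl's law.
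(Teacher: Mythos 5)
Your proposal is correct and follows essentially the same route as the paper: the decomposition of $\lambda_k^*(\CR)^{d/2}$ into $\sum_{j\in J} n_j\lambda_j(\Omega)^{d/2}$ via Lemma~\ref{wk}, the weighted-ratio (mediant) bound giving $\lambda_k^*(\CR)^{d/2}/k \ge \min_{j\in J}\lambda_j(\Omega)^{d/2}/j$, the trivial reverse inequality from $J\subset\N$, and the final bound via $\lambda_k^*(\CR)\le\lambda_k(\Omega)$ together with Weyl's law and Corollary~\ref{BLY}. The only cosmetic difference is that you route the decomposition through the proof of Theorem~\ref{smallo}, whereas the paper cites Lemma~\ref{wk} directly.
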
 \vspace{6pt}

Before starting with the proof, let us observe two things about this statement.
First, it means that $\inf_k \lambda_k^*(\CR)^{d/2}k^{-1}$ is realised. Second,
it means that if $\Omega$ is a minimiser in $\CR$ only for finitely many $k$'s
and if P\'olya's conjecture holds, then P\'olya's bound is attained since the
realised minimum of $\lambda_k^*(\CR)^{d/2}$ would be exactly $(2\pi)^dk\omega_d^{-1}$.

\begin{proof}
 Let
 \begin{equation}
  L' = \min_{j \in J} \, \frac{\lambda_j(\Omega)^{d/2}}{j} .
 \end{equation}
It exists as $J$ is finite, and $L' \ge L$. For any $k \not \in J$, a set which
realises $\lambda_k^*(\CR)$ necessarily has several connected components. It
results from Lemma \ref{wk} that
\begin{equation}
  \lambda_k^*(\CR)^{d/2} = \sum_{j \in J} n_j \lambda_{j}(\Omega)^{d/2}
\end{equation}
where $\set{n_j: j \in J}$ are nonnegative integers such that
\begin{equation}
 \sum_{j \in J} n_j j = k.
\end{equation}
Therefore
\begin{equation}
\begin{aligned}
 \frac{\lambda_k^*(\CR)^{d/2}}{k} &= \frac 1 k \sum_{j \in J} n_j
 \lambda_{j}^{d/2} \, \ge \, \frac 1 k \sum_{j \in J} n_j j L' = L' ,
 \end{aligned}
\end{equation}
which immediately implies 
\begin{equation}
 L = \inf_{k} \frac{\lambda_k^*(\CR)^{d/2}}{k} \ge L' = \min_{j \in J} \frac{\lambda_{j}(\Omega)^{d/2}}{j} \ge L.
\end{equation}

Furthermore, since $\lambda_k(\Omega) \ge \lambda_k^*(\CR)$ for every $k \in \mathbb{N}$ and since Weyl's law implies that
\begin{equation}
 \lim_{k \to \infty} \frac{\lambda_k(\Omega)^{d/2}}{k} = \frac{(2\pi)^d}{\omega_d},
\end{equation}
we get from Corollary \ref{BLY} that indeed
\begin{equation}
 \min_{j \in J} \frac{\lambda_{j}(\Omega)^{d/2}}{j} = \lim_{k \to \infty} \frac{\lambda_k^*(\CR)^{d/2}}{k} \le \frac{(2\pi)^d}{\omega_d}
\end{equation}

\end{proof}\vspace{6pt}

\begin{proof}[Proof of Theorem \ref{thm:trichotomy}]
  We have proved in Proposition \ref{Infinite} that if $J$ is infinite, then
  P\'olya's conjecture holds. The two other parts of the trichotomy are proved by Proposition \ref{Minimum}.
  
\end{proof}
We now turn our attention to the proof of Theorem \ref{thm:tfae}, in the case
where the domain $\Omega$ satisfies the two-term Weyl law
\eqref{twotermweyldir}.

\begin{proof}[Proof of Theorem \ref{thm:tfae}]
  In all generality, clearly (2) implies (3), and (1) implies (3). Indeed, since (1) places in the first
  possibility of the trichotomy \ref{thm:trichotomy}, which implies (3). We
  shall show that the assumption that a two-term Weyl law holds can be used to
  infer that (1) implies (2) and that
  (3) implies (1).

  \noindent  \textbf{Proof of (1) implies (2).} 
  Write the sequence of minimisers, all of volume $1$, as
  \begin{equation}
    \Omega_k^{*} = \bigsqcup_{q=1}^{\nu_k} r_{k,q} \Omega,
  \end{equation}
  where $\nu_k := \nu(\Omega_k^*) < \infty$ by Lemma \ref{existence_min}. Suppose that the $r_{k,q}$ coefficients are
  in decreasing order,
  $$r_{k,1} \ge \dotso \ge r_{k,\nu_k}.$$
  It follows from Lemma \ref{wk} that for every $1 \le q
  \le \nu_k$ there is $j_q:= j_q(k) \in J$ such that 
  \begin{equation}
    r_{k,q} = \left( \frac{\lambda_{j_q}(\Omega)}{\lambda_k^*(\CR)}
    \right)^{1/2},
  \end{equation}
  and $j_1 + \dotso + j_{\nu_k} = k$. It follows from Weyl's law that
  \begin{equation}
    \lim_{k\to \infty} r_{k,1} = 1 \quad \Longleftrightarrow \quad    \lim_{k \to \infty} \frac{j_1(k)}{k} = 1.
  \end{equation}
  Suppose that the righthand side of the previous equivalence does not hold,
  i.e. that there exists $\delta > 0$ and a
  subsequence, that we still label with $k$, such that for all $k$, $j_1(k) \le (1
  - \delta)  k$. 
 For all $\eps > 0$, it follows from the two-term Weyl law that there
exists a rank $N$ such that for all $j > N$,
\begin{equation} \label{eq:bound2eps}
  \lambda_j(\Omega)^{d/2} \ge \frac{(2\pi)^d}{\wv d}j + 
  \underbrace{\left(\frac{(2\pi)\wv{d-1}}{4
  \wv d^{\frac{2d-1}{d}}}\abs{\del \Omega}- \eps \right)}_{:=A - \eps}j^{\frac{d-1}{d}}.
\end{equation}
For all $k$, let $Q:= Q(k)$ be defined as
\begin{equation}
  Q := \begin{cases}
    0 &\text{if } j_q \le N \text{ for all } 1 \le q \le \nu_k, \\
    \max\set{q : j_q > N} & \text{otherwise.}
  \end{cases}
\end{equation}
We define
\begin{equation}
  \Upsilon_k := \bigsqcup_{q=1}^{Q} r_{k,q} \Omega \quad \text{and} \quad
  \Xi_k := \bigsqcup_{q=Q+1}^{\nu_k} r_{k,q} \Omega.
\end{equation}
We claim that $\nu(\Xi_k)$ is bounded in $k$. Indeed, it follows from the strong
P\'olya conjecture that there exists $M$ such that for all $j > M$, 
\begin{equation}
  \frac{\lambda_j(\Omega)^{d/2}}{j} < \frac{\lambda_{j_q}(\Omega)^{d/2}}{j_q}
\end{equation}
for all $q > Q$. Writing $$j_{Q +1} +
\dotso + j_{\nu_k} = j' > \nu(\Xi_k),$$
it follows from Lemma \ref{wk} that if $j' \ge M$, then
\begin{align}
  \lambda_{j'}(\Xi_k)^{d/2}
   &=
  \lambda_{j'}^*(\CR)^{d/2} \\
  &\le \sum_{q=Q + 1}^{\nu_k} j_q \frac{\lambda_{j'}(\Omega)^{d/2}}{j'} \\
  &< \sum_{q=Q+1}^{\nu_k} \lambda_{j_q}(\Omega)^{d/2} \\
  &= \lambda_{j'}(\Xi_k)^{d/2},
\end{align}
a contradiction. Hence, $\nu(\Xi_k) \le j' < M$, and
\begin{equation}
  \label{eq:k0}
k_0 := \sum_{q =1}^{Q} j_q  = \ge k - M
\end{equation}
Recall that we assumed that there is $\delta > 0$ such that $j_1 < (1 - \delta)
k$, and it follows from \eqref{eq:k0} that, up to choosing $\delta$ a bit
smaller, $j_1 < (1 - \delta) k_0$. Let $R := R(k)$ be defined as
$$
R:= \max\set{r : 2 \le r \le Q \text{ and } \frac{1}{k_0} \sum_{q=r}^Q j_q >
\delta},
$$
and we denote
$$
\delta_k := \frac{1}{k_0}\sum_{q = R}^Q j_q.
$$
There is no loss of generality in assuming $\delta < 1/3$. That the $j_q$ are in
decreasing order ensures that in that case
$\delta < \delta_k < 1 - \delta$. 
Recall that for all $1 \le q \le Q$, $j_q > N$ hence \eqref{eq:bound2eps}
holds. It is a consequence again of Lemma \ref{wk} that
\begin{equation}
\begin{aligned}
  \label{eq:tocontradict}
  \lambda_k^*(\CR)^{d/2} &\ge \sum_{q=1}^{Q}
  \lambda_{j_q}(\Omega)^{d/2} \\
  & \ge \sum_{q = 1}^{Q} \left[\frac{(2\pi)^d}{\wv d} j_q + 
  \left(A - \eps
\right)j_q^{\frac{d-1}{d}}\right] \\
&\ge \frac{(2\pi)^d}{\wv d} k +  
 \left( A - \eps
\right)\sum_{q = 1}^{Q}j_q^{\frac{d-1}{d}} + \bigo{1}.
\end{aligned}
\end{equation}
We study the sum in the last line of the previous display. It follows
from subadditivity of the function $x \mapsto x^\alpha$ for $\alpha < 1$, and
from $k_0^\alpha = k^\alpha + \bigo{k^{\alpha - 1}}$ that
\begin{align}
\sum_{q = 1}^{Q} j_q^{\frac{d-1}{d}} &\ge \left(\sum_{q = 1}^{R - 1} j_q\right)^{\frac{d-1}{d}} +
  \left(\sum_{q=R}^{Q} j_q\right)^{\frac{d-1}{d}} \\
&\ge \left((1-\delta_k)^{\frac{d-1}{d}} + \delta_k^{\frac{d-1}{d}}\right)
k^{\frac{d-1}{d}} + \bigo{k^{-1/d}}
\end{align}
It is a simple exercise to see that the
function $x \mapsto x^\alpha + (1-x)^\alpha$, $\alpha < 1$ being concave and
symmetric on
$[0,1]$ and $\delta < \delta_k < 1 - \delta$ imply that
\begin{equation}
  (1 - \delta_k)^{\frac{d-1}{d}} + \delta_k^{\frac{d-1}{d}} \ge (1 -
  \delta)^{\frac{d-1}{d}} + \delta^{\frac{d-1}{d}} \ge 1 +
\left(2^{1/d} - 1\right)\delta =: 1 + c_d \delta,
\end{equation}
and $c_d > 0$. 
Putting this back into \eqref{eq:tocontradict}, it follows that
\begin{equation}
  \lambda_{k}^*(\CR)^{d/2} \ge \frac{(2\pi)^d}{\wv d} k + 
 \left(A - \eps\right)(1 + c_d \delta)
 k^{\frac{d-1}{d}} + \bigo 1.
\end{equation}
Choosing 
\begin{equation}
  \eps = \frac{c_d \delta A}{2(1 + c_d \delta)}
\end{equation}
gives, for $k$ large enough, that
\begin{equation}
  \lambda_k^*(\CR)^{d/2} \ge \frac{(2\pi)^d}{\wv d}k + \left(A + \frac{A c_d
  \delta}{3}\right)k^{\frac{d-1}{d}}.
\end{equation}
However, since
\begin{equation}
  \lambda_k(\Omega)^{d/2} = \frac{(2\pi)d}{\wv d} k + A k^{\frac{d-1}{d}} +
  \smallo{k^{\frac{d-1}{d}}},
\end{equation}
we have that for $k$ large enough, $\lambda_k(\Omega)^{d/2} <
\lambda_k^*(\CR)^{d/2}$, a contradiction. Hence, for any $\delta > 0$ there are no subsequences along
which $j_1(k) < (1 - \delta)k$ for all $k$. It is readily seen that $r_{k,1}$ converges to
$1$. 

\noindent \textbf{Proof of (3) implies (1).} Assume that the Strong P\'olya
conjecture doesn't hold for $\Omega$. From Theorem \ref{thm:trichotomy}, there
is a rank $j$ such that 
\begin{equation}
  \label{eq:nostrongpol}
  \lambda_j(\Omega)^{d/2} \le \frac{(2\pi)^d}{\wv{d}} j. 
\end{equation} 
Suppose that along a subsequence, labeled by $k$,
\begin{equation}
  \Omega_k^* = (1 - \eps_k) \Omega \sqcup \Upsilon_k,
\end{equation}
with $\eps_k \to 0$. From Lemma \ref{wk}, for every $k$ there exists a rank
$j_k$ such that
\begin{equation}
  \label{eq:jkunbdd}
  \lambda_k^*(\CR)^{d/2} = (1 - \eps_k)^{-d} \lambda_{j_k}(\Omega)^{d/2},
\end{equation}
and that $\Omega = \Omega_{j_k}^*$. It follows from Lemma \ref{BLY} and equation
\eqref{eq:jkunbdd} that $j_k \to \infty$. By the two-term Weyl law
\eqref{twotermweyldir}, there is $A > 0$ such that
\begin{equation}
  \lambda_{j_k}(\Omega)^{d/2} = \frac{(2 \pi)^d}{\wv{d}} j_k + A
  j_k^{\frac{d-1}{d}} + \smallo{j_k^{\frac{d-1}{d}}},
\end{equation}
hence there exists a constant $C > 0$ such that for every $k$ large enough,
\begin{equation}
  \label{eq:wellsep}
  \lambda_{j_k}(\Omega)^{d/2} - \frac{(2\pi)^d}{\wv{d}} j_k \ge C
  j_k^{\frac{d-1}{d}}.
\end{equation}
We now show that for large enough $k$, $\Omega$ is in fact not a minimiser for
$\lambda_{j_k}$ amongst $\CR$. Write $j_k = n_k j + r$, with $0 \le r < j$.
Consider the domain $\Omega'$ defined as
\begin{equation}
  \label{eq:omega'}
  \Omega' = \left(\frac{\lambda_r(\Omega)}{\lambda_j(\Omega)}\right)^{1/2}
  n_k^{-1/d}
  \Omega \sqcup \left(\bigsqcup_{q =1}^{n_k} n_k^{-1/d} \Omega\right).
\end{equation}
We have constructed $\Omega'$ explicitly so that the first component in
\eqref{eq:omega'} has $n_k^{2/d}\lambda_j(\Omega)$ as its $r^{\rm{th}}$
eigenvalue, and all the other components have $n_k^{2/d}\lambda_j(\Omega)$ as
its $j^{\rm{th}}$ eigenvalue, it then follows that
\begin{equation}
  \lambda_{j_k}(\Omega')^{d/2} = n_k \lambda_j(\Omega)^{d/2}.
\end{equation}
Furthermore,
\begin{equation}
    \abs{\Omega'} = \left( 1+ \left(
      \frac{\lambda_r(\Omega)}{\lambda_j(\Omega)} \right)^{d/2} \frac{1}{n_k}
    \right).
\end{equation}
Combining these equalities with \eqref{eq:nostrongpol}, we deduce that
\begin{equation}
  \begin{aligned}
    \abs{\Omega'} \lambda_{j_k}(\Omega')^{d/2} 
     &\le \left( 1+ \left(
     \frac{\lambda_r(\Omega)}{\lambda_j(\Omega)} \right)^{d/2} \frac{1}{n_k}
    \right)\frac{(2\pi)^d}{\wv{d}} n_k j \\
     &= \left( 1+ \left(
       \frac{\lambda_r(\Omega)}{\lambda_j(\Omega)} \right)^{d/2}
       \frac{j}{j_k - r}
     \right)\frac{(2\pi)^d}{\wv{d}}(j_k - r) \\
     &= \frac{(2\pi)^d}{\wv{d}} j_k + \bigo{1}
\end{aligned}
\end{equation}
This combined with estimate \eqref{eq:wellsep} implies that for $k$ large
enough, $\abs{\Omega'}\lambda_{j_k}(\Omega')^{d/2} <
\lambda_{j_k}(\Omega)^{d/2}$, contradicting optimality of $\Omega$ for
$\lambda_{j_k}$.
\end{proof}
For the next few results we shall assume that $\Omega$ is a minimiser only finitely many times, namely $\Omega = \Omega_k^*$ if and only if
$k \in J = \{j_1, \dots, j_p\} \subset \N$. We shall continue to write simply $L = \inf_k \, k^{-1} \lambda_k^*(\CR^{d/2})$. We shall say that a
\textit{minimiser $\Omega_k^*$ realises $L$} if $ \lambda_k(\Omega_k^*)^{d/2} \, k^{-1}= L$. 

Recall that a for set $\Upsilon \in \CR$ and $n \in N$, the
$n$-th propagation of $\Upsilon$ is the set
\begin{equation}
 \Upsilon^{(n)} = \bigsqcup_{\ell = 1}^n \, \frac{1}{n^{1/d}} \,  \Upsilon \, .
\end{equation}
Observe that $|\Upsilon| = |\Upsilon^{(n)}|$ and that $\lambda_k(\Upsilon)^{d/2} \, k^{-1} = \lambda_{nk}(\Upsilon^{(n)})^{d/2} \, (nk)^{-1}$ for any $n \in \N$.

\begin{defi}\label{DefPropagation}
A minimiser $\Omega_k^*$ \textit{propagates as a minimiser in $\CR$} if for every $n \in \N$ we have $\Omega_k^{* \, (n)} = \Omega_{nk}^*$. A minimiser $\Omega_k^*$
\textit{weakly propagates as a minimiser in $\CR$} if there exist a sequence of integers $n_1 < n_2 < \dots \nearrow + \infty$ and a corresponding sequence of minimisers
in $\CR$ of the form
\begin{equation} \label{weakpropa}
 \Omega_{k'_i}^* = r_i \Omega_k^{* \, (n_i)} \, \sqcup \, \Upsilon_i \,. 
\end{equation}
\end{defi} \vspace{6pt}

\begin{prop} \label{PropPropagation}
A minimiser $\Omega_k^*$ realises $L$ if and only if it propagates as a minimiser in $\CR$.
\end{prop}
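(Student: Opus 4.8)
The plan is to prove both implications by exploiting the two characterisations of $L=\inf_k\lambda_k^*(\CR)^{d/2}/k$: that $L$ is the limit of $\lambda_k^*(\CR)^{d/2}/k$ (Corollary \ref{BLY}), and that, when $J$ is finite, $L$ is \emph{realised} and equals $\min_{j\in J}\lambda_j(\Omega)^{d/2}/j$ (Proposition \ref{Minimum}). The key observation, already recorded in the excerpt, is the scaling identity $\lambda_k(\Upsilon)^{d/2}k^{-1}=\lambda_{nk}(\Upsilon^{(n)})^{d/2}(nk)^{-1}$ together with $|\Upsilon|=|\Upsilon^{(n)}|$, so that propagating a domain does not change the ratio $\lambda_\bullet^{d/2}/\bullet$ and keeps the volume equal to $1$.

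First I would prove the easy direction: if $\Omega_k^*$ propagates as a minimiser, then $\Omega_{nk}^*=\Omega_k^{*\,(n)}$ for all $n$, so by the scaling identity $\lambda_{nk}(\Omega_{nk}^*)^{d/2}/(nk)=\lambda_k(\Omega_k^*)^{d/2}/k$ for all $n$. Letting $n\to\infty$ and invoking Corollary \ref{BLY}, the left-hand side tends to $L$, hence $\lambda_k(\Omega_k^*)^{d/2}/k=L$, i.e. $\Omega_k^*$ realises $L$.

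For the converse, suppose $\Omega_k^*$ realises $L$, so $\lambda_k(\Omega_k^*)^{d/2}/k=L$. Fix $n$ and set $\Upsilon:=\Omega_k^{*\,(n)}$; then $|\Upsilon|=1$ and $\lambda_{nk}(\Upsilon)^{d/2}/(nk)=L$. Since $L=\inf_m\lambda_m^*(\CR)^{d/2}/m\le\lambda_{nk}^*(\CR)^{d/2}/(nk)$, while on the other hand $\lambda_{nk}^*(\CR)\le\lambda_{nk}(\Upsilon)$ because $\Upsilon\in\CR$ has volume $1$, we get $\lambda_{nk}^*(\CR)^{d/2}/(nk)=L=\lambda_{nk}(\Upsilon)^{d/2}/(nk)$; thus $\Upsilon$ is \emph{a} minimiser for $\lambda_{nk}^*(\CR)$. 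This shows $\Omega_k^{*\,(n)}$ realises $\lambda_{nk}^*(\CR)$ for every $n$, which under the convention (stated just before Definition \ref{DefPropagation}) that $\Omega_{nk}^*$ denotes \emph{any} such minimiser is exactly the statement that $\Omega_k^*$ propagates as a minimiser in $\CR$.

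The main subtlety — and the step I would be most careful about — is the non-uniqueness of minimisers: $\Omega_{nk}^*$ is only defined up to a choice, so "propagates as a minimiser" must be read as "the propagated domain is \emph{a} valid choice of $\Omega_{nk}^*$", not that it coincides with some previously fixed one. Once that reading is made explicit (and it is the reading forced by the convention recalled in the excerpt), the argument above is complete; no two-term Weyl information is needed here, only subadditivity, Fekete's lemma via Corollary \ref{BLY}, and the scaling identity for propagations.
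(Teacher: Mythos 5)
Your proof is correct and follows essentially the same route as the paper: both rest on the scaling identity for propagations and the double inequality $L \le \lambda_{nk}^*(\CR)^{d/2}(nk)^{-1} \le \lambda_{nk}\bigl(\Omega_k^{*\,(n)}\bigr)^{d/2}(nk)^{-1} = \lambda_k(\Omega_k^*)^{d/2}k^{-1}$, with Corollary \ref{BLY} supplying the limit $L$. The only (cosmetic) difference is that you obtain the lower bound $L \le \lambda_{nk}^*(\CR)^{d/2}(nk)^{-1}$ directly from the definition of $L$ as an infimum, whereas the paper re-derives it via the Wolf--Keller decomposition of Lemma \ref{wk}; your handling of the non-uniqueness of minimisers also matches the paper's convention.
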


\begin{proof}
Fix $k \in \N$ and a minimiser $\Omega_k^*$. We have $\lambda_{nk}^*(\CR) \le \lambda_{nk}\left(\Omega_{k}^{(n)}\right) = n^{2/d} \lambda_k(\Omega_k^*)$. Fix $n >1$. Whether or not $nk$ belongs to $J$, there exist nonnegative integers $n_1, \dots, n_p$ such that $nk = \sum_{i=1}^p n_i j_i$ and
\begin{equation}
 \begin{aligned}
 \lambda_{nk}^*(\CR)^{d/2} &= \sum_{i = 1}^p n_i \lambda_{j_i}^{d/2} \, \ge \,  \sum_{i = 1}^p n_i j_i L = nk L.
 \end{aligned}
\end{equation}
Therefore we have
\begin{equation} \label{doubleineq}
 L  \le \frac{\lambda_{nk}^*(\CR)^{d/2}}{nk} \le \frac{\lambda_{nk}\left(\Omega_k^{(n)}\right)^{d/2}}{nk} = \frac{\lambda_{k}(\Omega_k^*)^{d/2}}{k} \, .
\end{equation}

In view of this, it follows that $\Omega_k^*$ realises $L$ if and only if for every $n \in \N$ both inequalities in \eqref{doubleineq} are equalities.

In turn, this is equivalent to only the second inequality being an equality for every $n$. Indeed, the latter would imply that the sequence $n \mapsto \lambda_{nk}^*(\CR)^{d/2} \, (nk)^{-1}$ is constant, but we know that it converges to $L$ as $n \to \infty$ hence the first inequality being an equality too.

Now for any fixed $n$, the equality $\lambda_{nk}^*(\CR)^{d/2} \, ( nk)^{-1} = \lambda_{nk}\left( \Omega_k^{(n)}\right)^{d/2} \, (nk)^{-1}$ is equivalent to the claim that $\Omega_k^{(n)}$ realises $\lambda_{nk}^*(\CR)$. Consequently, the second inequality in \eqref{doubleineq} being an equality for every $n \in N$ means precisely that $\Omega_k^*$ propagates as a minimiser.
\end{proof} \vspace{6pt}

\begin{lem}\label{propag=weakpropag}
A minimiser $\Omega_k^*$ propagates as a minimiser in $\CR$ if and only if it weakly propagates as a minimiser in $\CR$.
\end{lem}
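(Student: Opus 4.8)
The plan is to prove the easy implication by an explicit construction, and the hard one by reducing (via Proposition \ref{PropPropagation}) to the statement that $\Omega_k^*$ realises $L$, then arguing by contradiction. If $\Omega_k^*$ propagates, then taking $n_i = i$, $r_i = (i/(i+1))^{1/d}$ and $\Upsilon_i = (i+1)^{-1/d}\Omega_k^*$ gives $\Omega_{(i+1)k}^* = \Omega_k^{*\,(i+1)} = r_i\Omega_k^{*\,(n_i)}\sqcup\Upsilon_i$, so $\Omega_k^*$ weakly propagates. For the converse I would suppose $\Omega_k^*$ weakly propagates but, toward a contradiction, that $a := k^{-1}\lambda_k(\Omega_k^*)^{d/2} > L$. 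Fix weak-propagation data $\Omega_{k'_i}^* = r_i\Omega_k^{*\,(n_i)}\sqcup\Upsilon_i$ with $n_i\nearrow\infty$ and set $\Lambda_i := \lambda_{k'_i}^*(\CR)$; since $\nu(\Omega_{k'_i}^*)\ge n_i\,\nu(\Omega_k^*)\ge n_i$, Lemma \ref{existence_min} forces $k'_i\ge n_i\to\infty$, whence $\Lambda_i^{d/2}/k'_i\to L$ by Corollary \ref{BLY}.

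The first substantive step is to show that the rescaled piece $\Omega_k^{*\,(n_i)}$ is itself optimal: writing $m_i := N(\Lambda_i;\,r_i\Omega_k^{*\,(n_i)}) = N(r_i^2\Lambda_i;\,\Omega_k^{*\,(n_i)})$, I claim $\Omega_k^{*\,(n_i)}$ realises $\lambda_{m_i}^*(\CR)$. This is the usual surgery argument (cf. Lemmas \ref{existence_min}, \ref{wk}): if not, replacing $r_i\Omega_k^{*\,(n_i)}$ inside $\Omega_{k'_i}^*$ by $(\lambda_{m_i}^*(\CR)/\Lambda_i)^{1/2}$ times a genuine optimiser of $\lambda_{m_i}^*(\CR)$ strictly lowers the volume while still contributing at least $m_i$ eigenvalues $\le\Lambda_i$, so after renormalising one gets a unit-volume element of $\CR$ whose $(k'_i)$-th eigenvalue is below $\Lambda_i$, contradicting $\Lambda_i = \lambda_{k'_i}^*(\CR)$.

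The second step exploits that $\Omega_k^{*\,(n_i)}$ is $n_i$ isometric copies of $n_i^{-1/d}\Omega_k^*$, so its spectrum is that of $\Omega_k^*$ dilated by $n_i^{2/d}$ with each multiplicity multiplied by $n_i$; hence $\lambda_{m_i}(\Omega_k^{*\,(n_i)}) = n_i^{2/d}\lambda_{t_i}(\Omega_k^*)$ with $t_i := \lceil m_i/n_i\rceil$ and $m_i\le n_i t_i$. Chaining Step 1, the bound $\lambda_{t_i}(\Omega_k^*)\ge\lambda_{t_i}^*(\CR)$, subadditivity (Lemma \ref{subadditivity}) applied to the partition $n_i t_i = t_i+\dots+t_i$, and monotonicity of $m\mapsto\lambda_m^*(\CR)$ yields
\begin{equation*}
 n_i\lambda_{t_i}^*(\CR)^{d/2}\;\le\;n_i\lambda_{t_i}(\Omega_k^*)^{d/2}\;=\;\lambda_{m_i}^*(\CR)^{d/2}\;\le\;\lambda_{n_i t_i}^*(\CR)^{d/2}\;\le\;n_i\lambda_{t_i}^*(\CR)^{d/2},
\end{equation*}
so all terms coincide: $\Omega_k^*$ realises $\lambda_{t_i}^*(\CR)$, and $m\mapsto\lambda_m^*(\CR)$ is constant on $[m_i,\,n_i t_i]$ (hence on $[n_i(t_i-1)+1,\,n_i t_i]$ when $t_i\ge 2$).

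The final step is to pass to a subsequence along which $t_i$ is either constant $=t$ or $t_i\nearrow\infty$, and derive a contradiction. When $t\ge 2$, applying subadditivity to $n_i(t-1)+1 = n_i+\dots+n_i+1$ on the plateau gives $n_i\lambda_t^*(\CR)^{d/2}\le (t-1)\lambda_{n_i}^*(\CR)^{d/2}+\lambda_1^*(\CR)^{d/2}$; dividing by $n_i t$ and letting $i\to\infty$ forces $\lambda_t^*(\CR)^{d/2}/t\le\tfrac{t-1}{t}L<L$, contradicting Corollary \ref{BLY}. When $t=1$, the equalities above force $\lambda_1(\Omega_k^*)^{d/2} = L$, so $\Omega_k^*=\Omega$ realises $\lambda_1^*(\CR)=L$; Proposition \ref{PropPropagation} at rank $1$ then makes $\Omega$ propagate, i.e. $\Omega=\Omega^{(k)}$, impossible for $k\ge 2$, while $k=1$ gives $a=L$ directly. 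The case $t_i\nearrow\infty$ is the point I expect to be the main obstacle: there $\Omega_k^*$ is a fixed domain realising $\lambda_m^*(\CR)$ for an unbounded set of ranks, which (by Lemma \ref{wk} together with Weyl's law applied to $\Omega_k^*$, which also gives $L=(2\pi)^d/\omega_d$) forces all connected components of $\Omega_k^*$ to share a common scaling, so $\Omega_k^*=\Omega^{(p)}$ is itself a propagation; one then reruns the plateau estimate, balancing the growth of $t_i$ against the (a priori slow) convergence rate of $\lambda_n^*(\CR)^{d/2}/n\to L$ by choosing the subsequence appropriately, to again produce $\lambda_{t_i}^*(\CR)^{d/2}/t_i<L$ for large $i$. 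In every case $a=L$, so $\Omega_k^*$ realises $L$ and, by Proposition \ref{PropPropagation}, propagates as a minimiser.
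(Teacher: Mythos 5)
Your easy direction and your Steps 1--2 are fine (and the surgery argument showing that the piece $\Omega_k^{*\,(n_i)}$ realises $\lambda_{m_i}^*(\CR)$, with the resulting chain of equalities $n_i\lambda_{t_i}^*(\CR)^{d/2}=n_i\lambda_{t_i}(\Omega_k^*)^{d/2}=\lambda_{m_i}^*(\CR)^{d/2}=\lambda_{n_it_i}^*(\CR)^{d/2}$, is a legitimate and careful piece of bookkeeping). The proof breaks down in the final step. First, note that $m_i=N(\Lambda_i;r_i\Omega_k^{*\,(n_i)})$ is automatically a multiple of $n_i$, so $m_i=n_it_i$ and your ``plateau'' $[m_i,n_it_i]$ is a single point; the extension of constancy to $[n_i(t-1)+1,n_it]$ is therefore unsupported, and with it falls the whole $t\ge 2$ case. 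A structural warning sign: that case never invokes the hypothesis $a>L$, so if the derivation were valid it would prove $\lambda_t^*(\CR)^{d/2}/t<L$ unconditionally, contradicting Corollary \ref{BLY} which is a theorem. Second, the case $t_i\nearrow\infty$ is, as you admit, not actually proved. Third, and most fundamentally, even where your argument does close it concludes that $\Omega_k^*$ realises $L$ \emph{at rank} $t_i$, whereas the definition of ``realises $L$'' (and hence Proposition \ref{PropPropagation}) is tied to the rank $k$: you need $\lambda_k(\Omega_k^*)^{d/2}k^{-1}=L$, and you never establish $t_i=k$.

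The paper's proof is much shorter and hinges on exactly the identity your detour around $m_i$ obscures: the scaling relation $\lambda_{nk}(\Upsilon^{(n)})^{d/2}(nk)^{-1}=\lambda_k(\Upsilon)^{d/2}k^{-1}$ recorded just before Definition \ref{DefPropagation}. From Lemma \ref{wk} one reads off that $\Omega_k^{*\,(n_i)}$ realises $\lambda_{n_ik}^*(\CR)$ (i.e.\ the propagated piece carries exactly $n_ik$ eigenvalues, so in your notation $m_i=n_ik$ and $t_i=k$), whence
\begin{equation}
\frac{\lambda_k(\Omega_k^*)^{d/2}}{k}=\frac{\lambda_{n_ik}\bigl(\Omega_k^{*\,(n_i)}\bigr)^{d/2}}{n_ik}=\frac{\lambda_{n_ik}^*(\CR)^{d/2}}{n_ik}\longrightarrow L,
\end{equation}
and the left-hand side is independent of $i$, so $\Omega_k^*$ realises $L$ and Proposition \ref{PropPropagation} finishes the proof. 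If you want to repair your write-up, the essential missing ingredient is the identification of the rank contributed by $r_i\Omega_k^{*\,(n_i)}$ with $n_ik$; once that is in place your $t_i$ equals $k$ for every $i$, the case analysis becomes unnecessary, and the argument collapses to the two lines above.
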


\begin{proof}
The "only if" part is trivial. For the "if" part, consider a sequence of minimisers $\Omega_{k_j}^*$ as in equation \eqref{weakpropa}. It follows from Lemma \ref{wk} that for each $j \in \N$, the set $\Omega_k^{* \, (n_j)}$ realises $\lambda_{n_j k}^*(\CR)$. As a consequence of this and of Corollary \ref{BLY}, we compute
\begin{equation}
\begin{aligned}
 \frac{\lambda_k(\Omega_k^*)^{d/2}}{k} = \frac{\lambda_k \left(\Omega_k^{* \, (n_j)} \right)^{d/2}}{n_jk} = \frac{\lambda_{n_jk}^* (\CR)^{d/2}}{n_jk} \underset{j \to + \infty}{\longrightarrow} L \, .
\end{aligned}
\end{equation}
This means that $\Omega_k^*$ realises $L$. Proposition \ref{PropPropagation} thus implies that $\Omega_k^*$ propagates as a minimiser in $\CR$.
\end{proof} \vspace{6pt}

Let us consider the sets
\begin{equation}
K_{L} = \{ \, k \in \N \, : \, \lambda_k^*(\CR)^{d/2} \, k^{-1} = L \,  \} \; \mbox{ and } \; J_{L} = J \cap K_{L} \, .
\end{equation}
We observe that $K_{L}$ is closed under finite sums.

Continuing with the assumption of finite $J$, Proposition \ref{Minimum} implies that the set is not empty. Set $j_{L} = \mathrm{max} \, J_{L}$. Proposition \ref{PropPropagation} implies that the minimiser $\Omega = \Omega_j^*$ associated to $j \in J_{L}$ propagates as a minimiser in $\CR$. One might expect these minimisers to be special, for instance to have a minimum numbers of connected components among minimisers of a given eigenvalue functional, if not to be unique. These expectations are even more vivid for the propagations of $\Omega^*_{j_{L}}$. The next result investigates these possibilities. \vspace{6pt}

\begin{lem}\label{optimality} Assume $J_{L}$ is finite and set $j_L = :=\max
  J_L$. Let $j \in J_L$. If there exist $n \in \N$ and a minimiser $\Omega^*_{n
  j} \neq \Omega^{* \, (n)}_{j}$, then $\{j\} \subsetneq J_{L}$. If furthermore
  $\nu \left( \Omega^*_{n j}  \right) \le \nu \left( \Omega^{* \, (n)}_{j}
  \right)$, then $j < j_{L}$. If instead $\nu \left( \Omega^*_{n j}  \right) >
  \nu \left( \Omega^{* \, (n)}_{j}  \right)$, then there exists $j' \in J_L$
  such that $j' < j$.
\end{lem}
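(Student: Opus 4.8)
The plan is to exploit the propagation machinery of Proposition \ref{PropPropagation} together with the additive/subadditive bookkeeping of Lemma \ref{wk}. First, recall that since $j \in J_L$, the generator $\Omega = \Omega_j^*$ realises $L$, so by Proposition \ref{PropPropagation} it propagates as a minimiser; in particular $\Omega_j^{*\,(n)} = \Omega_{nj}^*$ is a minimiser for $\lambda_{nj}^*(\CR)$ and $\lambda_{nj}^*(\CR)^{d/2}(nj)^{-1} = L$, i.e. $nj \in K_L$. Now suppose there is another minimiser $\Omega_{nj}^* \neq \Omega_j^{*\,(n)}$. Since it also realises $\lambda_{nj}^*(\CR)$, it too realises $L$, so $nj \in K_L$ (which we already knew), and applying Lemma \ref{wk} to it we obtain a partition $nj = \sum_{i} n_i j_i$ with $j_i \in J$, $\Omega_{nj}^* = \bigsqcup_i \bigsqcup_{m=1}^{n_i} \alpha_i \Omega_{j_i}^*$, and $\lambda_{nj}^*(\CR)^{d/2} = \sum_i n_i \lambda_{j_i}(\Omega)^{d/2}$. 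Dividing by $nj$ and using $\lambda_{nj}^*(\CR)^{d/2}(nj)^{-1} = L$ together with $\lambda_{j_i}(\Omega)^{d/2} \ge j_i L$ (from Corollary \ref{BLY}) forces $\lambda_{j_i}(\Omega)^{d/2} = j_i L$ for every $i$ appearing with $n_i > 0$; hence every such $j_i$ lies in $J_L$. Because $\Omega_{nj}^*$ is not equal to $\Omega_j^{*\,(n)}$ (which is $n$ copies of $\frac1{n^{1/d}}\Omega$), at least one $j_i \neq j$ must occur, so $J_L$ strictly contains $\{j\}$. This gives the first assertion.

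Next, for the dichotomy on the number of connected components, the idea is to compare $\nu(\Omega_{nj}^*) = \sum_i n_i$ against $\nu(\Omega_j^{*\,(n)}) = n$, while keeping track of the constraint $\sum_i n_i j_i = nj$. If $\nu(\Omega_{nj}^*) \le n$, i.e. $\sum_i n_i \le n$, then since $\sum_i n_i j_i = nj$ the weighted average $\big(\sum_i n_i j_i\big)/\big(\sum_i n_i\big) = nj/\sum_i n_i \ge j$, so the $j_i$'s cannot all be $\le j$; at least one $j_i > j$ occurs with positive multiplicity, and as shown above that $j_i \in J_L$, hence $j < j_{L}$. Conversely, if $\nu(\Omega_{nj}^*) > n$, i.e. $\sum_i n_i > n$, the same averaging gives $nj/\sum_i n_i < j$, so at least one $j_i < j$ occurs with $n_i > 0$; again $j_i \in J_L \subset J$, so there is $j' := j_i \in J_L$ with $j' < j$. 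Care must be taken with the edge case where all $j_i$ appearing equal $j$ itself — but that would force $\Omega_{nj}^* = \Omega_j^{*\,(n)}$ (since then $\sum n_i = n$ and each component is $\frac{1}{n^{1/d}}\Omega$ up to isometry, by the formula for $\alpha_i$ in Lemma \ref{wk}), contradicting the hypothesis; so in each branch a genuinely different $j_i$ is present and the strict inequality on averages is what delivers the conclusion.

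The main obstacle I anticipate is the bookkeeping around the case of equality: one must argue cleanly that if all the $j_i$'s that actually occur (those with $n_i>0$) are equal to $j$, then the decomposition from Lemma \ref{wk} coincides with the $n$-th propagation $\Omega_j^{*\,(n)}$, so the hypothesis $\Omega_{nj}^* \ne \Omega_j^{*\,(n)}$ is genuinely being used to produce a $j_i \ne j$. This requires checking that the scale factors $\alpha_i = \sqrt{\lambda_{nj}^*(\CR)/\lambda_{j_i}^*(\CR)}$ collapse to $n^{-1/d}$ when all $j_i = j$ (using $\lambda_{nj}^*(\CR)^{d/2} = (\sum_i n_i)\lambda_j(\Omega)^{d/2}$ and $\sum_i n_i = n$), which is a short computation but needs to be spelled out. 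Everything else is a direct application of the already-established results, so the proof should be quite short once this point is handled.
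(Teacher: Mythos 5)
Your proof is correct. The first assertion is handled exactly as in the paper: apply Lemma \ref{wk} to $\Omega^*_{nj}$, use $\lambda^*_{j_i}(\CR)^{d/2}\ge j_iL$ together with $\lambda^*_{nj}(\CR)^{d/2}=njL$ to force every rank $j_i$ with $n_i>0$ into $J_L$, and rule out the degenerate case "all $j_i=j$" by checking it would reproduce $\Omega_j^{*\,(n)}$ (your remark that the scale factors then collapse to $n^{-1/d}$ is the same verification the paper makes; note the formula for $\alpha_q$ as printed in Lemma \ref{wk} is inverted relative to its own proof, and your $n^{-1/d}$ is the correct value). Where you genuinely diverge is in the two component-count statements: the paper applies the pigeonhole principle to the \emph{volumes} of the connected components of $\Omega^*_{nj}$, extracts a component with $r_h^d$ strictly greater (resp.\ smaller) than $n^{-1}$, and converts this into $j_h>j$ (resp.\ $j_h<j$) via the scaling identity $L=r_h^d\,\lambda^*_{nj}(\CR)^{d/2}/j_h$. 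You instead compare the weighted average $nj/\sum_i n_i$ of the ranks with $j$, which is purely combinatorial: it needs only the two bookkeeping identities $\sum_i n_ij_i=nj$ and $\nu(\Omega^*_{nj})=\sum_i n_i$, and in the case $\sum_i n_i>n$ the contradiction is immediate with no equality case to discuss. Your argument is a touch more elementary since it avoids the volume/eigenvalue scaling computation entirely, at the cost of having to handle the equality case ($\sum_i n_i=n$ with all $j_i=j$) explicitly in the first branch — which you do, correctly, by falling back on the hypothesis $\Omega^*_{nj}\neq\Omega_j^{*\,(n)}$. Both routes deliver the same conclusions.
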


\begin{proof} Both $\Omega_j^{* \, (n)}$ and $\Omega^*_{nj}$ realises $\lambda_{nj}^*(\CR)$. As a result of Lemma \ref{wk} we have a decomposition
\begin{equation}\label{decompo}
\Omega^*_{nj} = \bigsqcup_{i=1}^p \, \bigsqcup_{m=1}^{n_i} \, r_i \Omega^*_{j_i} \; \mbox{ with } \; \sum_{i=1}^p n_i j_i = nj
\end{equation}
which induces the equality
\begin{equation}
\lambda^*_{nj}(\CR)^{d/2} = \sum_{i=1}^p \, n_i \lambda^*_{j_i}(\CR)^{d/2} \, .
\end{equation}

We claim that there is an index $h$ such that $j_h \neq j$ and $n_h > 0$. Otherwise the only positive $n_i$ would be $n_l$ where $j_l = j$; It would follow from \eqref{decompo} that $n_l = n$ and that $r_i = n_l^{-1/d}$, hence $\Omega_{nj}^* = \Omega_j^{(n)}$. This is a contradiction with our assumptions, hence the claim.

Since $j \in J_{L}$, $\Omega_j^{* \, (n)}$ realises $L$ and so does $\Omega^*_{nj}$. We compute
\begin{equation}
\begin{aligned}
L &= \frac{\lambda_{nj}(\Omega_{nj}^*)^{d/2}}{nj} = \frac{1}{nj} \sum_{i=1}^p \, n_i \lambda^*_{j_i}(\CR)^{d/2} \\
&= \frac{1}{nj} \sum_{i=1}^p \, n_i j_i \frac{\lambda^*_{j_i}(\CR)^{d/2}}{j_i} \ge \frac{1}{nj} \sum_{i=1}^p \, n_i j_i L = L \, ,
\end{aligned}
\end{equation}
which implies that $\lambda_{j_i}^*(\CR)^{d/2} j_i^{-1} = L$ for every $i$ such that $n_i > 0$, so in particular for $i=h$. This means  $j_h \neq j$ satisfies $j_h \in J_{L}$, hence $\{j \} \subsetneq J_{L}$.

Assume now moreover $\nu \left( \Omega^*_{n j}  \right) \le \nu \left( \Omega^{* \, (n)}_{j}  \right) = n$. By the pigeonhole principle and -- in case the previous inequality is an equality -- by $\Omega^*_{n j} \neq \Omega^{* \, (n)}_{j}$, at least one of the connected components of $\Omega^{* \, (n)}_{j}$ has volume strictly greater than $n^{-1}$. Put differently, if $h$ is the index of such a component then $r_h > n^{-1/d}$. We compute
\begin{equation}
\begin{aligned}
L &= \frac{\lambda_{j_h}^*(\CR)^{d/2}}{j_h} = \frac{r_h^d \,  \lambda_{j_h}(r_h \Omega_{j_h}^*)^{d/2}}{j_h} \\
&= \frac{r_h^d \, \lambda_{nj}^*(\CR)^{d/2}}{j_h} \, > \, \frac{n^{-1}
\lambda_{nj}^*(\CR)^{d/2}}{j_h} = \frac{j}{j_h} \,
\frac{\lambda_{nj}^*(\CR)^{d/2}}{nj} = \frac{j}{j_h} \, L \, , 
\end{aligned}
\end{equation}
which means that $j < j_h$ and \textit{a fortiori} that $ j < j_{L}$.

Assume now instead $\nu \left( \Omega^*_{n j}  \right) > \nu \left( \Omega^{* \, (n)}_{j}  \right) = n$. The pigeonhole principle now implies that at least one connected component has volume strictly less than $n^{-1}$. The same argument as before with the direction of inequalities inverted yields the existence of $j' = j_h \in J_{L}$ such that $j > j'$.
\end{proof} \vspace{6pt}

A consequence of this last lemma is that for $n \in \N$, the domain with the least number of connected components realising the eigenvalue $\lambda_{nj_{L}}^*(\CR)$ is unique and is given by the propagation $\Omega_{j_{L}}^{* \, (n) }$.

Another consequence of the proof is that $K_{L}$ is generated by $J_{L}$, that is any $k \in K_{L}$ is a finite sum of elements in $J_{L}$. Indeed, given $k \in K_{L} \setminus J_{L}$ and a minimiser $\Omega_k^*$, the propagation $\Omega_k^{* \, (j_{L})}$ realises $\lambda_{j_{L}k}^*(\CR)$. The connected components of this propagation are thus contracted copies of minimisers canonically associated with $J_{L}$ and so are the ones of $\Omega_k^*$, hence the result. The minimisers $\Omega_{j_i}^* = \Omega$ with $j_i \in J_{L}$ are thus the building blocks of any minimiser realising $L$.



\section{Bounds from packings}\label{sec:packing}

  We have just seen that the failure of P\'olya's conjecture
  for a domain $\Omega$ implies that infinitely many minimisers in $\CR(\Omega)$
  are realised by propagators $\Omega^{(n)} = \cup_{j=1}^n n^{-1/d} \Omega$. It is
  thus natural to study the spectrum of those propagators, notably by
  geometrically realising them as subsets of other domains, that is by packing
  the $\Omega^{(n)}$s into others domains. This packing idea leads to the main
  result in this section, to wit an estimate from below on $L = \inf_{k \in \N}
  \,  \, \lambda^*_k(\CR)^{d/2} \, k^{-1}$ in term of the "packing density" of
  $\Omega$. Recall that this packing density was defined in Definition \ref{packing} \vspace{6pt}

  We start by proving a few properties of this packing density.

\begin{lem}
Given three bounded domains $\Omega$, $V$ and $W$,
\[ \rho_{\Omega, W} \, \ge \, \rho_{\Omega, V} \, \rho_{V, W} \, . \]
\end{lem}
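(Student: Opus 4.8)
The statement is a "composition of packings is a packing" fact, so the plan is to take asymptotic packings witnessing values close to $\rho_{\Omega,V}$ and $\rho_{V,W}$, rescale and compose them componentwise, and check that the result is an asymptotic packing of $\Omega$ into $W$ with density close to the product. Fix $\eps > 0$ and choose an asymptotic packing $P = \{(n_i,\rho_i,f_i)\}_{i\in\N}$ of $\Omega$ into $V$ with $\rho_P > \rho_{\Omega,V} - \eps$, and an asymptotic packing $Q = \{(m_\ell,\sigma_\ell,g_\ell)\}_{\ell\in\N}$ of $V$ into $W$ with $\sigma_Q > \rho_{V,W} - \eps$. Each $f_i$ is an isometric quasi-embedding $\overline{\Omega^{(n_i)}} \to \rho_i^{-1/d} V$, and each $g_\ell$ is an isometric quasi-embedding $\overline{V^{(m_\ell)}} \to \sigma_\ell^{-1/d} W$.

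\textbf{Main construction.} The idea is that a packing of $\Omega^{(n_i)}$ into $V$ of density $\rho_i$ yields, after scaling by $m_\ell^{-1/d}$, a packing of $\Omega^{(n_i m_\ell)}$ into each of the $m_\ell$ copies of $m_\ell^{-1/d}V$ that make up $V^{(m_\ell)}$; composing with $g_\ell$ then packs $\Omega^{(n_i m_\ell)}$ into $\sigma_\ell^{-1/d}W$. More precisely, write $V^{(m_\ell)} = \bigsqcup_{s=1}^{m_\ell} m_\ell^{-1/d} V$; on the $s$-th copy, apply $m_\ell^{-1/d} f_i$ to send a copy of $\Omega^{(n_i)}$ (scaled by $m_\ell^{-1/d}$, i.e.\ a sub-union of $\Omega^{(n_i m_\ell)}$) into $m_\ell^{-1/d}\rho_i^{-1/d}V$. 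Running over all $s$ gives an isometric quasi-embedding $\overline{\Omega^{(n_i m_\ell)}} \to \rho_i^{-1/d} V^{(m_\ell)}$ (injectivity on interiors follows since the $f_i$ are injective on interiors and the $m_\ell$ translated copies have disjoint interiors). Post-composing with $g_\ell$ (rescaled by $\rho_i^{-1/d}$, noting the family $\CR$ and the notion of isometric quasi-embedding are scale-equivariant) produces an isometric quasi-embedding $h_{i,\ell} : \overline{\Omega^{(n_i m_\ell)}} \to (\rho_i \sigma_\ell)^{-1/d} W$, i.e.\ a packing of $\Omega^{(n_i m_\ell)}$ into $W$ of density $\rho_i \sigma_\ell$.

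\textbf{Extracting the asymptotic packing.} Now choose a strictly increasing sequence of pairs $(i_t, \ell_t)$ — for instance along the diagonal — so that $N_t := n_{i_t} m_{\ell_t}$ is strictly increasing (always possible since $n_i, m_\ell \to \infty$), set $\tau_t := \rho_{i_t}\sigma_{\ell_t} \in (0,1]$ and $h_t := h_{i_t,\ell_t}$. Then $\{(N_t, \tau_t, h_t)\}_{t\in\N}$ is an asymptotic packing of $\Omega$ into $W$, and $\tau_t \to \rho_P \sigma_Q > (\rho_{\Omega,V}-\eps)(\rho_{V,W}-\eps)$. Hence $\rho_{\Omega,W} \ge (\rho_{\Omega,V}-\eps)(\rho_{V,W}-\eps)$, and letting $\eps \to 0$ gives $\rho_{\Omega,W} \ge \rho_{\Omega,V}\,\rho_{V,W}$.

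\textbf{Expected obstacle.} The only genuinely delicate point is the bookkeeping in the main construction: verifying that the composite $h_{i,\ell}$ is injective on interiors and genuinely lands in the right scaled copy of $W$, and that "$\Omega^{(n_i)}$ scaled by $m_\ell^{-1/d}$ is literally a sub-union of the connected components of $\Omega^{(n_i m_\ell)}$" — this uses $\Omega^{(ab)} = (\Omega^{(a)})^{(b)}$ up to rearrangement, which is immediate from the definition of propagation and closure of $\CR$ under disjoint unions and homotheties. None of this requires real estimates; it is purely a matter of setting up the scalings consistently, so I expect the write-up to be short once the indices are fixed.
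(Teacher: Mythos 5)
Your proof is correct and follows essentially the same route as the paper, which simply composes a near-optimal packing of $\Omega$ into $V$ with an asymptotic packing of $V$ into $W$ and leaves the rescaling bookkeeping to the reader ("it is very clear how $g$ and $P$ can be composed"). The only cosmetic difference is that the paper fixes a single finite-stage packing on the $\Omega$-into-$V$ side rather than running a diagonal argument over two asymptotic packings; your more explicit verification of the scalings and of injectivity on interiors is exactly the content the paper omits.
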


\begin{proof}
Given any $\eps > 0$, there exist a packing $g$ of $\Omega^{(m)}$ into $V$ of density $\rho_g > \rho_{\Omega, V} - \eps$ and an asymptotic packing $P = \{(n_i, \rho_i, f_i)\}_{i \in \N}$ of $V$ into $W$ with asymptotic density $\rho_P > \rho_{V, W} - \eps$. It is very clear how $g$ and $P$ can be "composed" to yield an asymptotic packing of $\Omega$ into $W$ with asymptotic density $\rho_g \rho_P > \rho_{\Omega, V}\rho_{V, W} - O(\eps)$. The lemma readily follows. 

\end{proof} \vspace{6pt}

\begin{prop} \label{tiling}
Let $\Omega$ and $V$ be two bounded domains in $\R^d$ with volume $1$. Suppose that $\Omega$ tiles $\R^d$ and that the upper Minkowski dimension of $\partial V$ is strictly smaller than $d$. Then $\rho_{\Omega, V}=1$ and thus $\rho_{\Omega} = 1$.
\end{prop}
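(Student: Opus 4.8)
The plan is to fix $\eps > 0$ and to build, for each large integer $n$, a packing of $\Omega^{(n)}$ into $V$ whose density tends to $1$ as $n \to \infty$. Since $\Omega$ tiles $\R^d$, there is an isometric quasi-embedding $F : \sqcup_{i \in \N} \bar \Omega \to \R^d$ that is surjective; write $\Omega_i := F(\bar\Omega \times \{i\})$ for the tiles, so that $\R^d = \bigcup_i \Omega_i$ with pairwise disjoint interiors and each $|\Omega_i| = 1$. The idea is to dilate $V$ by a large factor $T > 0$ and to count how many whole tiles $\Omega_i$ fit inside $TV$; these tiles are pairwise isometric copies of $\Omega$, so their disjoint union is (after rescaling by $T^{-1}$) an element of $\CR(\Omega)$ of the form $\Omega^{(m)}$ for the appropriate $m$, and it sits inside $V$ by construction.

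First I would make this quantitative. Let $N(T)$ be the number of indices $i$ with $\overline{\Omega_i} \subset TV$, and let $B(T)$ be the number of $i$ with $\overline{\Omega_i} \cap \partial(TV) \neq \emptyset$. Every tile meeting $TV$ is either counted in $N(T)$ or in $B(T)$, and since $\Omega$ is bounded, say of diameter $\delta$, any such boundary tile lies within the $\delta$-neighbourhood of $\partial(TV) = T \partial V$. Hence
\begin{equation}
  B(T) \le \bigl| \{ x : \operatorname{dist}(x, T\partial V) \le \delta \} \bigr|,
\end{equation}
which is the volume of a tubular neighbourhood of $T\partial V$. Because the upper Minkowski dimension of $\partial V$ is some $s < d$, this tube has volume $\bigo{T^{s + \eps'}}$ for any $\eps' > 0$ (that is precisely what finiteness of the upper Minkowski content at scale $\delta/T$ gives, after rescaling), so $B(T) = \so{T^d}$. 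On the other hand $N(T) \cdot 1 \ge |TV| - (\text{volume covered by boundary tiles}) \ge T^d - \delta \cdot B(T) = T^d(1 - \so{1})$, and also $N(T) \le |TV| = T^d$. So $N(T) = T^d(1 + \so{1})$ as $T \to \infty$.

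Now set $m = N(T)$ and consider the disjoint union $\Upsilon := \bigsqcup_{i : \overline{\Omega_i} \subset TV} \Omega_i$; it is contained in $TV$, has volume $m$, and all of its connected components are isometric copies of $\Omega$. Rescaling by $T^{-1}$ gives an isometric quasi-embedding of $T^{-1}\Upsilon$ into $V$, and $T^{-1}\Upsilon$ has $m$ components each of volume $T^{-d}$. Finally I rescale the copy $\Omega^{(m)} = \bigsqcup_{\ell=1}^m m^{-1/d}\Omega$ (each component of volume $m^{-1}$) by the factor $(m/T^d)^{1/d} \le 1$ to match component sizes, obtaining an isometric quasi-embedding $f : \overline{\Omega^{(m)}} \to V$; this realises a packing of $\Omega^{(m)}$ into $\rho^{-1/d} V$ with $\rho = m/T^d = 1 + \so{1}$ (one should note $\rho \le 1$, which holds since $m = N(T) \le T^d$). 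Letting $T \to \infty$ along a sequence making $m$ strictly increasing, we get an asymptotic packing of $\Omega$ into $V$ with density arbitrarily close to $1$, so $\rho_{\Omega,V} = 1$, and since $|V| = 1$ this also gives $\rho_\Omega = 1$.

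The main obstacle is the boundary estimate $B(T) = \so{T^d}$: one must translate the hypothesis on the upper Minkowski dimension of $\partial V$ into a genuine bound on the volume of a fixed-width neighbourhood of the dilated boundary $T\partial V$, paying attention to the interplay between the fixed tile-diameter $\delta$ and the scaling factor $T$ (equivalently, working at Minkowski scale $\delta/T \to 0$). Everything else is bookkeeping about disjoint unions, volumes, and the scaling relations listed after Definition~\ref{defi:fam}; in particular the identifications $T^{-1}\Upsilon \in \CR(\Omega)$ and the rescaling to the standard propagator $\Omega^{(m)}$ are routine given closure of $\CR$ under homothety and disjoint union.
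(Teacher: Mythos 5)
Your proposal is correct and follows essentially the same route as the paper: count the tiles of the tiling of $\R^d$ that lie entirely inside a large dilate $TV$, control the tiles meeting $\partial(TV)$ via the diameter of $\Omega$ and the upper-Minkowski-dimension hypothesis (tube volume $\so{T^d}$), and rescale to obtain an asymptotic packing of density $m/T^d \to 1$. One small bookkeeping point: the isometric quasi-embedding of $\overline{\Omega^{(m)}}$ is obtained by scaling the configuration of $m$ unit tiles in $TV$ by $m^{-1/d}$, landing directly in $m^{-1/d}TV = \rho^{-1/d}V$ (your intermediate map into $V$ itself is a contraction of $\Omega^{(m)}$, not an isometry), but this does not affect the argument.
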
 \vspace{6pt}

\begin{rem} We recall that the upperbox dimension or upper Minkowski dimension of a set $S \subset \R^d$ could be defined as
\[ d_{\mbox{\small{up}}}(S) := d - \liminf_{r \to 0^+} \, \frac{\log | S(r)|}{\log r}  \]
where $S(r) := \{ y \in \R^d \, : \, \| y - S \| < r \}$ is the $r$-neighborhood of $S$.
\end{rem}\vspace{6pt}

\begin{proof}
For simplicity, suppose $0 \in \mathrm{int}(V) \subset \R^d$ and consider that any homothety to be performed below is with respect to $0$. We shall also think of the tiling $F$ as a mere quasi-inclusion and we will not use $F$ in our notations.

Since $V$ is bounded, there exists $R > 0$ such that $V \subset rV$ for all $r
\ge R$. Consequently, we have the sequence of inclusions $$V \subset RV \subset
R^2V \subset R^3V \subset \dots$$ Without lost of generality, take $R \in \N$.

Denote $\Omega_i$ the $i$-th component $\Omega$ in the disjoint union $\sqcup_{i
\in \N} \, \Omega$. For $n \in \N$, let $I_n \subset \N$ be the largest set such
that $\overline{\Omega_i} \subset n V$ for every $i \in I_n$. This set is finite
as its cardinality is at most $|nV|/|\overline{\Omega}| = n$. Because of the
previous paragraph, $I_{R^i} \subset I_{R^{i+1}}$ for every $i \in \N$. For $i
\in \N$, set $n_i = \# \, I_{R^{i}}$.

Because $\Omega$ and hence $\overline{\Omega}$ are bounded, the latter is contained in an open ball $B$ of diameter $D$. Let 
\begin{equation}
 (nV)_{2D} = \{ \, p \in nV \, : \, \mathrm{dist}(p, (nV)^{c}) \ge 2D \, \} \, .
 \end{equation}
We claim that the set $(nV)_{2D} \setminus \cup_{i \in I_n} \,
\overline{\Omega_i}$ is empty. Suppose otherwise; then there exist a point $x$ in this nonempty set and, since $\Omega$ is a tile, an index $i \in (I_n)^{c}$ such that $x \in \overline{\Omega_i} \subset B_i$. The definition of $I_n$ implies $\overline{\Omega_i} \cap (nV)^{c} \neq \emptyset$, so there exists $y$ in this latter intersection and thus in $B_i$. It follows that $\mathrm{dist}(x,y) < 2D$, which is a contradiction. This proves the claim, and consequently $(nV)_{2D} \subset \cup_{i \in I_n} \overline{\Omega_i} \subset nV$.

By assumption on $\partial(nV)$, the volume of the $2D$-neighbourhood of $\partial(nV)$ grows like $o(n^{d})$, so that the volume of $(nV)_{2D}$ grows like $n^d - o(n^d)$. From the set inclusions obtained in the previous paragraph, the same asymptotic is true for the growth of the volume of $\cup_{i \in I_n} \, \overline{\Omega_i}$, that is of $\sharp \, I_n$.

Consider the asymptotic packing $P = \{(n_i, \rho_i, f_i)\}_{i \in \N}$ given by $n_i = \sharp \, I_{R^{i}}$, $\rho_i = n_i/n^d$ and
\begin{equation}
 f_i \, : \, \overline{\Omega^{(n_i)}} \cong \sqcup_{i \in I_{R^{i}}} \, n_i^{-1/d} \, \overline{\Omega_i} \; \hookrightarrow \;  n_i^{-1/d} \, nV = \rho_i^{-1/d} \, V \; . 
 \end{equation}
From the previous paragraph we get $\rho_P = \lim_{i \to \infty} \rho_i = 1$, thus $\rho_{\Omega, V} =1$.
\end{proof} \vspace{6pt}

The previous result suggests to define another, \emph{a priori} smaller notion of packing density, namely the \emph{lower packing number or lower packing density of $\Omega$} is
\begin{equation} 
\underline{\rho}_{\Omega} \, = \, \inf \, \left\{ \, \rho_{\Omega, V} \, \left| \,\mbox{$V$ bounded domain, $|V| = 1$, $d_{\mbox{\small{upperbox}}}(\partial V) < d$ } \right. \, \right\} \; . 
\end{equation}\vspace{0pt}

\begin{cor}
For any bounded domain $\Omega \subset \R^d$, 
\[ \underline{\rho}_{\Omega} = \rho_{\Omega, V} > 0 \]
for any bounded tile $V \subset \R^d$ whose boundary has upperbox dimension strictly less than $d$.
\end{cor}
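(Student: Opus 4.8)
The plan is to combine the submultiplicativity of the packing density (the first Lemma of this section) with Proposition~\ref{tiling} and Theorem~\ref{thm:packing}. First I would observe that for \emph{any} bounded tile $V$ with $d_{\mbox{\small{upperbox}}}(\partial V) < d$, the equality $\rho_{\Omega,V} = \underline{\rho}_\Omega$ must be shown; one inequality is immediate from the definition of $\underline\rho_\Omega$ as an infimum over exactly such domains $V$. For the reverse inequality, let $W$ be any other bounded domain of volume $1$ with $d_{\mbox{\small{upperbox}}}(\partial W) < d$. Since $V$ is a tile, Proposition~\ref{tiling} gives $\rho_{V,W} = 1$. Then the submultiplicativity lemma yields
\begin{equation}
  \rho_{\Omega,W} \ge \rho_{\Omega,V}\,\rho_{V,W} = \rho_{\Omega,V}.
\end{equation}
Taking the infimum over all such $W$ gives $\underline\rho_\Omega \ge \rho_{\Omega,V}$, and hence $\underline\rho_\Omega = \rho_{\Omega,V}$ for every such tile $V$, which is the first assertion.

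Next I would establish positivity. Fix one concrete tile $V$ with $d_{\mbox{\small{upperbox}}}(\partial V)<d$ — for instance the unit cube, which certainly tiles $\R^d$ and has smooth-enough boundary. It suffices to show $\rho_{\Omega,V} > 0$ for this $V$, since we have just shown the value is independent of the choice of such tile. But $\Omega$ is open and bounded with $\abs\Omega = 1$, so it contains a small ball, and more to the point, a rescaled copy $\tfrac{1}{n^{1/d}}\Omega$ of the $n$-th propagation $\Omega^{(n)}$ can be packed into a large cube by placing the $n$ congruent pieces side by side inside a bounding box and tiling that box into $V$; the fraction of volume wasted is bounded away from $1$ uniformly in $n$ because each congruent piece $\tfrac{1}{n^{1/d}}\Omega$ sits inside a box whose volume exceeds $\abs{\tfrac{1}{n^{1/d}}\Omega} = 1/n$ by a fixed ratio depending only on the isoperimetric-type shape of $\Omega$ (the ratio of the volume of a bounding box of $\Omega$ to $\abs\Omega$). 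This produces an asymptotic packing of $\Omega$ into $V$ with asymptotic density bounded below by a positive constant, so $\rho_{\Omega,V} > 0$.

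The main obstacle is the positivity step: one must produce an \emph{asymptotic} packing — i.e. a sequence $(n_i,\rho_i,f_i)$ with $\rho_i \to \rho_P > 0$ — rather than a single packing, and the quasi-embeddings $f_i$ must genuinely be isometric on each component and injective on interiors. The cleanest route is to take $n_i$ along the subsequence for which $\Omega^{(n_i)}$ decomposes nicely (e.g. $n_i = i^d$, so that $\tfrac{1}{n_i^{1/d}}\Omega = \tfrac1i\Omega$), pack $\tfrac1i\Omega$ into a fixed axis-aligned box $B$ of side length equal to the diameter-plus of $\Omega$, tile $i^d$ translated copies of $\tfrac1i B$ to fill $B$ itself, and then invoke Proposition~\ref{tiling} once more to pack $B$ (a tile) into $V$ with density tending to $1$; composing via the submultiplicativity lemma gives $\rho_{\Omega,V} \ge (\abs\Omega/\abs{B})\cdot 1 > 0$. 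One should double-check that the "wasted volume" estimate is uniform — it is, since the ratio $\abs B/\abs\Omega$ does not depend on $i$ — and that the boundary of $B$ has upperbox dimension $< d$, which is clear. With positivity in hand, the displayed chain $\underline\rho_\Omega = \rho_{\Omega,V} > 0$ is complete.
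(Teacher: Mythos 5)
Your proposal is correct and follows essentially the same route as the paper: the equality is obtained exactly as you describe, by combining the submultiplicativity lemma with Proposition~\ref{tiling} (giving $\rho_{V,W}=1$) and taking the infimum over $W$, with the reverse inequality immediate from the definition; and the positivity is obtained, as in the paper, by packing $\Omega$ once into a scaled cube and composing with the self-tiling $V^{(i^d)}$ of the cube to produce an asymptotic packing of constant positive density. Your extra care about the uniformity of the wasted-volume ratio and the injectivity of the quasi-embeddings is a sound elaboration of the paper's terser "by composing packings" step, not a deviation from it.
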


\begin{proof}
Let $W \subset \R^d$ be any bounded domain whose boundary has upper Minkowski dimension strictly less than $d$. Then from the two previous results we get $\rho_{\Omega, W} \ge \rho_{\Omega, V} \rho_{V, W} = \rho_{\Omega, V}$. Taking the infimum over all $W$ yields the equality claimed in the statement.

To prove the inequality, let's take $V = [0,1]^d$. Since $\Omega$ is bounded, there clearly is some $\rho \in (0,1]$ such that $\Omega$ can be packed in $\rho^{-1/d}V$. Since $V^{(i^d)}$ fully pack $V$ for each integer $i$, by "composing" packings we deduce that there is at least one asymptotic packing of $\Omega$ into $V$ with constant density $\rho > 0$, and \emph{a fortiori} we get $\rho_{\Omega, V} > 0$.
\end{proof}\vspace{6pt}

\begin{rem}
In the few last results, the assumption on the upperbox dimension -- which guaranteed that the boundary had vanishing Lebesgue measure -- was not superfluous. Indeed, given any $\eps > 0$, it is possible to find a bounded tile $V_{\eps} \subset \R^d$ with volume $1$ such that $| \mathrm{int}( V_{\eps})| < \eps$, for instance by applying a suitable symmetric adaptation of Knopp's construction of a Osgood "surface" on the sides of a cube; the packing density of a typical domain $\Omega$ into $V_{\eps}$ would thus be smaller than $\eps$. We leave the details to the industrious reader.
\end{rem}\vspace{6pt}

We are now in a position to prove a lower bound on $L := \mathrm{inf}_k \, \lambda^*_k(\CR)^{d/2} \, k^{-1}$ for the Dirichlet Laplacian eigenvalue problem in the class $\CR(\Omega)$.

\begin{lem} \label{lowerpacking}
Assume that $K_{L} = \{ \, k \in \N \, :  \,  \lambda^*_k(\CR)^{d/2} \, k^{-1} = L \, \}$ is non-empty. Then
\begin{equation}\label{eqlowerbound}
L \ge \rho_{\Omega} \, \frac{(2 \pi)^d}{\omega_d} \; .
\end{equation}\vspace{0pt}
\end{lem}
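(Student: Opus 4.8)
The plan is to reduce the inequality to the case where the generator $\Omega$ is itself a minimiser realising $L$, and then to compare the Dirichlet spectrum of its propagations $\Omega^{(m)}$ with that of a domain $V$ into which $\Omega$ packs efficiently, using domain monotonicity of Dirichlet eigenvalues together with Weyl's law.

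First I would check that the hypothesis $K_{L}\neq\emptyset$ already forces $J_{L}:=J\cap K_{L}\neq\emptyset$. Pick $k\in K_{L}$ and a minimiser $\Omega_{k}^{*}$. By Lemma~\ref{wk} (applied in $\CR(\Omega)$, where all generators are copies of $\Omega$), the domain $\Omega_{k}^{*}$ decomposes into connected components $\bigsqcup_{q=1}^{p}\alpha_{q}\Omega$ realising eigenvalue ranks $j_{q}\in J$ with $j_{1}+\dots+j_{p}=k$ and $\lambda_{k}^{*}(\CR)^{d/2}=\sum_{q}\lambda_{j_{q}}^{*}(\CR)^{d/2}$. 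Since $\lambda_{j_{q}}^{*}(\CR)^{d/2}\ge j_{q}L$ for each $q$ while $\lambda_{k}^{*}(\CR)^{d/2}=kL=\sum_{q}j_{q}L$, each of these inequalities must be an equality, so $\lambda_{j_{q}}^{*}(\CR)^{d/2}=j_{q}L$, i.e.\ $j_{q}\in K_{L}$; as $j_{q}\in J$ too, we get $j_{q}\in J\cap K_{L}=J_{L}$. I then fix some $j\in J_{L}$, so that $\lambda_{j}(\Omega)=\lambda_{j}^{*}(\CR)$ and $\lambda_{j}(\Omega)^{d/2}=jL$.

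Next I would take an arbitrary bounded domain $V$ of volume $1$ and an arbitrary asymptotic packing $P=\{(m_{i},\rho_{i},f_{i})\}_{i\in\N}$ of $\Omega$ into $V$, with $\rho_{i}\to\rho_{P}$. Each $f_{i}\colon\overline{\Omega^{(m_{i})}}\to\rho_{i}^{-1/d}V$, being an isometric quasi-embedding, restricts to a Riemannian isometry of $\Omega^{(m_{i})}$ onto an open subset of $\rho_{i}^{-1/d}V$, so domain monotonicity of Dirichlet eigenvalues gives $\lambda_{m}(\rho_{i}^{-1/d}V)\le\lambda_{m}(\Omega^{(m_{i})})$ for every $m$. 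Using the scaling law $\lambda_{m}(\rho_{i}^{-1/d}V)=\rho_{i}^{2/d}\lambda_{m}(V)$ and the identity $\lambda_{m_{i}j}(\Omega^{(m_{i})})=m_{i}^{2/d}\lambda_{j}(\Omega)$ (valid because $\Omega^{(m_{i})}$ is the disjoint union of $m_{i}$ copies of $m_{i}^{-1/d}\Omega$), the choice $m=m_{i}j$ gives $\rho_{i}^{2/d}m_{i}^{-2/d}\lambda_{m_{i}j}(V)\le\lambda_{j}(\Omega)$. Letting $i\to\infty$ and invoking the one-term Weyl law for the bounded open set $V$ (which requires no regularity of $\partial V$), so that $m_{i}^{-2/d}\lambda_{m_{i}j}(V)\to\frac{4\pi^{2}}{\omega_{d}^{2/d}}j^{2/d}$ while $\rho_{i}\to\rho_{P}$, I obtain $\rho_{P}^{2/d}\frac{4\pi^{2}}{\omega_{d}^{2/d}}j^{2/d}\le\lambda_{j}(\Omega)$. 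Raising to the power $d/2$ and using $\lambda_{j}(\Omega)^{d/2}=jL$ from the previous step yields $\rho_{P}\frac{(2\pi)^{d}}{\omega_{d}}j\le jL$, i.e.\ $L\ge\rho_{P}\frac{(2\pi)^{d}}{\omega_{d}}$. Taking the supremum over all asymptotic packings $P$ of $\Omega$ into all volume-one domains $V$ then gives $L\ge\rho_{\Omega}\frac{(2\pi)^{d}}{\omega_{d}}$, which is \eqref{eqlowerbound}.

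The step I expect to be the main obstacle is the first one: extracting a genuine generator-minimiser $j\in J_{L}$ from the bare hypothesis $K_{L}\neq\emptyset$, since it is only for the propagations $\Omega^{(m)}$ of the single generator—whose components all share one length scale—that the packing density $\rho_{\Omega}$ directly controls the bottom of the spectrum; for a general minimiser, with components of many different sizes, no such clean packing is available. A secondary technical point needing care is the verification that an isometric quasi-embedding really produces an inclusion of open subsets to which domain monotonicity applies. Everything after that is a routine combination of eigenvalue scaling and the one-term Weyl law.
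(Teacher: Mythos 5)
Your proof is correct and follows essentially the same route as the paper's: extract a rank $j\in J_{L}$ from $K_{L}\neq\emptyset$ via Lemma~\ref{wk}, pack the propagations $\Omega^{(n_i)}$ into $\rho_i^{-1/d}V$, and combine Dirichlet domain monotonicity with scaling and the one-term Weyl law. The only cosmetic difference is that you take a supremum over all packings at the end where the paper runs an $\eps$-approximation argument, and you spell out the deduction $K_{L}\neq\emptyset\Rightarrow J_{L}\neq\emptyset$ that the paper leaves implicit.
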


\begin{proof}
Using Lemma \ref{wk} in a way we already repeatedly used it before, we deduce from the assumption $K_{L} \neq \emptyset$ that $J_{L} = \{ \, k \in K_{L} \, : \, \Omega = \Omega^*_k    \,   \} \neq \emptyset$. Pick some $j \in J_{L}$.

Let $\eps > 0$ and consider an open bounded domain $V$ with volume $1$ such that $\rho_{\Omega, V} \ge \rho_{\Omega} - \eps/2$. Consequently, there exist an asymptotic packing $P = \{(n_i, \rho_i, f_i) \}_{i \in \N}$ of $\Omega$ into $V$ such that $\rho_P = \lim_{i \to \infty}  \rho_i \ge \rho_{\Omega} - \eps$.

The isometric quasi-embedding $f_i : \Omega^{* \, (n_i)}_j \to \rho_i^{-1/d} \, V$ allows us to view $\Omega^{* \, (n_i)}_j$ as a genuine subset of $\rho_i^{-1/d} \, V$. Considering the well-known fact that any Dirichlet eigenvalue functional $\Upsilon \mapsto \lambda_k(\Upsilon)$ is decreasing with respect to inclusion, namely that $\Upsilon_1 \subset \Upsilon_2$ implies $\lambda_k(\Upsilon_1) \ge \lambda_k(\Upsilon_2)$, it follows that 
\begin{equation}
\lambda_{n_i j} \left( \Omega^{* \, (n_i)}_j  \right)^{d/2} \ge \lambda_{n_i j} \left( \rho_i^{-1/d} \, V \right)^{d/2} \; .
\end{equation}
The left-hand side is equal to $n_i \lambda_j(\Omega^*_j) = n_i j L$, whereas the right-hand side equals $\rho_i \, \lambda_{n_i j}(V)^{d/2}$. Therefore
\begin{equation}
L \, \ge \,  \rho_i \; \frac{\lambda_{n_i j} \left(  V \right)^{d/2}}{n_i j} \; .
\end{equation}
Since $\lim_{i \to \infty} \rho_i = \rho_P \ge \rho_{\Omega} - \eps$ and because of Weyl's asymptotic law, taking the limit $i \to + \infty$ on the right-hand side yields
\begin{equation}
L \, \ge \,  (\rho_{\Omega} - \eps) \; \frac{(2 \pi)^d}{\omega_d} \; .
\end{equation}
As this is true for any $\eps > 0$, the result follows.
\end{proof}\vspace{6pt}
Theorem \ref{thm:packing} follows as a corollary of the previous Lemma.

\begin{proof}[Proof of Theorem \ref{thm:packing}]
The set $J = \{ \, k \in \N \, : \, \Omega \mbox{ realises } \lambda^*_k(\CR) \, \}$ is either infinite or finite. If it is infinite,
Proposition \ref{Infinite} implies P\'olya's conjecture and \textit{a fortiori} \eqref{eqlowerbound} as $\rho_{\Omega} \le 1$. If instead it is finite,
then Proposition \ref{Minimum} implies that $K_{L}$ is non-empty and the claim follows from the previous lemma.
\end{proof} \vspace{6pt}

We finally prove that simple tiles satisfy the strong P\'olya conjecture under
the condition that the two-term Weyl law holds, for Dirichlet eigenvalues.

\begin{proof}[Proof of Theorem \ref{polya_stile} for Dirichlet eigenvalues]
Fix a rank $j$ for which $\Omega$ realises $\lambda_j^*$. Since $\Omega$ is a
simple tile, there are a domain $V \subset \R^d$ with volume $1$ and an
asymptotic packing $P = \{(n_i, 1, f_i)\}_{i \in \N}$ of $\Omega$ into $V$ with
constant packing density $1$. Since $\Omega$ satisfies the two-term Weyl law
\eqref{twotermweyldir}, there is $M \in \N$ such that
\[ \frac{\lambda_{m}(V)^{d/2}}{m} > \frac{(2\pi)^d}{\omega_d} \quad \forall \, m
\ge M \, . \]
Consider $i \in \N$ sufficiently large so that $n_i j \ge M$, and consider the (full) packing $f_i : \Omega^{(n_i)} \to V$. Invoking the monotonicity of Dirichlet eigenvalues we thus get
\[ \frac{\lambda_{j}(\Omega)^{d/2}}{j} = \frac{\lambda_{n_i j}(\Omega^{(n_i)})^{d/2}}{n_i j} \ge \frac{\lambda_{n_i j}(V)^{d/2}}{n_i j} > \frac{(2\pi)^d}{\omega_d} \, .   \]
Considering Proposition \ref{Minimum}, this implies that $\Omega$ realises
$\lambda_k^*$ infinitely often. Using Corollary \ref{BLY}, we deduce
\[ L := \inf_{k} \frac{\lambda_k^*(\CR)^{d/2}}{k} = \inf_{j \in J} \frac{\lambda_j^*(\CR)^{d/2}}{j} .  \]

It also means that $L$ is not attained among the indices in $J$. We claim that
$L$ is not attained in $\CR$ at all, from which the last part of the theorem
readily results. Suppose otherwise, so that there exist $k \in \N$ and
$\Omega^*_k \in \CR$ such that $\lambda_k(\Omega_k^*) k^{-1} = L$. By
Lemma \ref{wk}, any connected component of $\Omega^*_k$ is a (contracted
copie of some) minimiser $\Omega^*_m$. Note in particular that $m \in J$. From
Proposition \ref{PropPropagation} follows that $\Omega_k^*$ propagates as a
minimiser, hence $\Omega_m^*$ weakly propagates as a minimiser by definition.
Lemma \ref{propag=weakpropag} implies that $\Omega^*_m$ propagates as a
minimiser, and so $\Omega^*_m$ realises $L$ by Proposition
\ref{PropPropagation}. This is a contradiction.
\end{proof} \vspace{6pt}

The proof of Theorem \ref{polya_stile} for Neumann eigenvalues is a bit more
subtle and this is due to the fact that Neumann eigenvalues do not behave in any
simple way
under inclusion. This is also why Theorem \ref{thm:packing} or modifications of
it fail in that situation : the behaviour under inclusion depends on the eigenfunctions of
the Laplacian. When the quasi-embeddings are actually surjective, however, we
can adapt \cite[Theorem 63]{canz} to our needs.

\begin{lem} \label{Can_est}
Let $V_1, \dots, V_N, W \subset \R^d$ be domains with Lipschitz boundaries.
Assume that $F : V := \sqcup_{j=1}^N \overline{V_j} \to W$ is an isometric
quasi-embedding, which induces a pullback map $F^* : H^1(W) \to H^1(V)$ between
Sobolev spaces. Denote $E_V(k) \subset H^1(V)$ and $E_W(k) \subset H^1(W)$ the
subspaces generated by the first $k$ Neumann eigenfunctions on $V$ and $W$,
respectively. Then for any fixed $k \in \N$, there is a nonzero $\phi \in
E_W(k)$ such that $F^*\phi$ is $L^2$-orthogonal to $E_V(k-1)$ and
\[ \mu_k(V) \, \le \, \frac{\| \phi \|^2_{L^2(W)}}{\| F^*\phi \|^2_{L^2(V)}} \, \mu_k(W) \; .  \]
\end{lem}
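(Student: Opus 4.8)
The plan is to mimic the variational/Courant–Fischer proof of eigenvalue monotonicity, using the pullback $F^*$ to transport test functions from $W$ to $V$. Since $F$ is an isometric quasi-embedding (injective on interiors), the pullback $F^*\phi$ of a function $\phi \in H^1(W)$ satisfies $\|F^*\phi\|^2_{L^2(V)} = \int_{F(V)} |\phi|^2$ and $\|\nabla F^*\phi\|^2_{L^2(V)} = \int_{F(V)}|\nabla \phi|^2$, because the images $F(\overline{V_j})$ have pairwise disjoint interiors and $F$ preserves the Riemannian metric; in particular $\int_{F(V)}|\phi|^2 \le \|\phi\|^2_{L^2(W)}$ and likewise for the gradient. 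The key obstruction is that $F^*$ need not be injective on $E_W(k)$ a priori, nor need $F^*(E_W(k))$ be transverse to $E_V(k-1)$; so one cannot simply pull back the whole $k$-dimensional eigenspace and apply min-max.

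First I would set up the dimension count. Consider the linear map $T : E_W(k) \to E_V(k-1)$ defined by $T\phi = \Pi_{E_V(k-1)}(F^*\phi)$, the $L^2(V)$-orthogonal projection of the pullback onto the span of the first $k$ Neumann eigenfunctions on $V$. Since $\dim E_W(k) = k+1$ (recall the Neumann numbering starts at $\mu_0$, so ``first $k$ eigenfunctions'' should be read consistently with the paper's convention — I will take $E_W(k)$ to have dimension $k+1$ as in the statement) while $\dim E_V(k-1) = k$, the kernel of $T$ is nontrivial: there exists a nonzero $\phi \in E_W(k)$ with $F^*\phi \perp_{L^2(V)} E_V(k-1)$. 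This is exactly the transversality condition claimed in the statement. One must check that $F^*\phi \neq 0$: if $F^*\phi = 0$ then $\phi$ vanishes on $F(V)$, but here I would invoke that $F(V)$ has full measure or at least positive measure in $W$ combined with unique continuation for Neumann eigenfunctions on the Lipschitz domain $W$ — alternatively, since the bound is only asserted up to the ratio $\|\phi\|^2/\|F^*\phi\|^2$, the case $F^*\phi = 0$ would make the right-hand side infinite and the inequality trivially true, so strictly speaking we only need $F^*\phi \neq 0$ to get a meaningful bound, and we may simply assume it (or discard such $\phi$ and note $T$ restricted to the complement still has nontrivial kernel by the dimension count as long as $F^*$ is injective on $E_W(k)$, which again follows from unique continuation).

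Next I would exploit the min-max characterisation of $\mu_k(V)$. Since $F^*\phi$ is orthogonal to $E_V(k-1)$, it lies in the orthogonal complement of the span of the first $k$ eigenfunctions, so by the Courant–Fischer principle
\[
\mu_k(V) \;\le\; \frac{\|\nabla (F^*\phi)\|^2_{L^2(V)}}{\|F^*\phi\|^2_{L^2(V)}}.
\]
Now I use the isometry: $\|\nabla(F^*\phi)\|^2_{L^2(V)} = \int_{F(V)}|\nabla\phi|^2 \le \int_W |\nabla\phi|^2 = \|\nabla\phi\|^2_{L^2(W)}$. Since $\phi \in E_W(k)$, the Rayleigh quotient of $\phi$ on $W$ is at most $\mu_k(W)$, i.e. $\|\nabla\phi\|^2_{L^2(W)} \le \mu_k(W)\,\|\phi\|^2_{L^2(W)}$. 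Chaining these,
\[
\mu_k(V) \;\le\; \frac{\|\nabla\phi\|^2_{L^2(W)}}{\|F^*\phi\|^2_{L^2(V)}} \;\le\; \frac{\mu_k(W)\,\|\phi\|^2_{L^2(W)}}{\|F^*\phi\|^2_{L^2(V)}},
\]
which is the desired estimate.

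The main obstacle, as flagged, is the behaviour of the pullback on the eigenspace: one needs both that $F^*$ does not collapse $E_W(k)$ too much (to keep the dimension count honest and to ensure $F^*\phi \neq 0$) and that the Rayleigh quotient genuinely decreases under the isometric quasi-embedding, which requires care about the Lipschitz regularity of the $V_j$ and $W$ so that $F^*$ is a bounded operator $H^1(W) \to H^1(V)$ with the metric-preserving property on gradients — this is where the hypothesis of Lipschitz boundaries and the convention that ``isometry'' means preserving Riemannian metrics enters. Everything else is the standard variational argument, essentially the proof of Courant–Fischer monotonicity adapted to allow the target to be a single domain $W$ rather than a disjoint union, following \cite[Theorem 63]{canz}.
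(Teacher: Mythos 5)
Your proof is correct and follows essentially the same route as the paper's: a dimension count producing a nonzero $\phi \in E_W(k)$ whose pullback is orthogonal to the first $k$ eigenfunctions on $V$, followed by the variational characterisation of $\mu_k(V)$ applied to $F^*\phi$ and the bound $\|\nabla\phi\|^2_{L^2(W)} \le \mu_k(W)\|\phi\|^2_{L^2(W)}$, linked through the isometry. The only cosmetic difference is that the paper imposes the orthogonality condition via pushforwards $F_*f_j$ in $L^2(W)$ rather than via projection onto $E_V(k-1)$, and you are in fact slightly more careful than the paper about the degenerate case $F^*\phi = 0$ and about the inequality $\int_{F(V)}|\nabla\phi|^2 \le \int_W|\nabla\phi|^2$ when $F$ is not surjective.
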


\begin{proof}
Let $\{f_k\}_{k \in \N}$ and $\{g_k\}_{k \in \N}$ be $L^2$-orthonormal bases of
Neumann eigenfunctions on $V$ and $W$ respectively, numbered in increasing order
of their eigenvalue. Since $F$ is an isometric quasi-embedding, we can define
pushforwards $F_{\ast}f_k \in L^2(W)$ by extension by $0$ outside the image of
$F$, and $\{F_{\ast}f_k\}_{k \in \N}$ are still $L^2$-orthonormal. Given $k \in
\N$, consider a nonzero linear combination $\phi = \sum_{j=0}^k a_j g_j$, so
that $\| \phi \|^2_{L^2(W)} = \sum_{j=0}^k a_j^2$. The requirement that it be
$L^2$-orthogonal to the first $k$ functions $F_{\ast}f_j$ uniquely specifies
$\phi$ up to a multiplicative constant; we note that $F^{\ast}\phi$ is then
$L^2$-orthogonal to the first $k$ functions $f_j$. On the one hand, we have
\[ \int_W \| \nabla \phi \|^2 dm = \sum_{i,j =0}^k a_i a_j \int_W \langle \nabla
g_i, \nabla g_j \rangle dm = \sum_{j=0}^k a_j^2 \mu_j(W) \le \mu_k(W) \| \phi
\|^2_{L^2(W)} \, ,  \]
while on the other hand
\[ \int_W \| \nabla \phi \|^2 dm = \int_V \| \nabla F^{\ast}\phi \|^2 dm \ge \mu_k(V) \| F^*\phi \|^2_{L^2(V)} \]
due to the fact that $f_0, \dots, f_{k-1}, \phi \in H^1(V)$ generates a
$k$-dimensional subspaces and the variational characterization of $\mu_K(V)$
as the infimum over such subspaces of the maximum of the Rayleigh quotient over
elements of the subspace. The claim readily follows.
\end{proof}

\begin{cor} \label{cor:neumon}
In the context of the previous lemma, if we further assume that $F$ is surjective, then $\mu_k(V) \le \mu_k(W)$ for all $k$.
\end{cor}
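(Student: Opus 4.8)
The plan is to derive Corollary \ref{cor:neumon} directly from Lemma \ref{Can_est} by exploiting surjectivity of $F$. The key observation is that when $F : V = \sqcup_{j=1}^N \overline{V_j} \to W$ is an isometric quasi-embedding that is moreover surjective, the complement $W \setminus F(V)$ has measure zero (it is contained in the image of the boundary $\partial V$, which has Lebesgue measure zero since $V$ has Lipschitz boundary). Consequently the pushforward map $F_\ast$ and the pullback $F^\ast$ become, in effect, mutually inverse $L^2$-isometries: for any $\phi \in L^2(W)$ we have $\|F^\ast\phi\|_{L^2(V)} = \|\phi\|_{L^2(W)}$, because integrating $|\phi|^2$ over $W$ is the same as integrating over $F(V)$, which pulls back isometrically to integrating $|F^\ast\phi|^2$ over $V$.

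From there the argument is essentially immediate. Fix $k \in \N$ and apply Lemma \ref{Can_est} to obtain a nonzero $\phi \in E_W(k)$ with $F^\ast\phi$ orthogonal to $E_V(k-1)$ and
\[
\mu_k(V) \le \frac{\|\phi\|^2_{L^2(W)}}{\|F^\ast\phi\|^2_{L^2(V)}}\,\mu_k(W).
\]
Under surjectivity, the ratio $\|\phi\|^2_{L^2(W)}/\|F^\ast\phi\|^2_{L^2(V)}$ equals $1$, so $\mu_k(V) \le \mu_k(W)$, as claimed. One should also briefly note that $F^\ast\phi \neq 0$ (so the right-hand side makes sense): this follows because $\phi \neq 0$ and $F^\ast$ is an $L^2$-isometry onto its image, hence injective.

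I expect the only genuine subtlety is justifying the $L^2$-isometry claim cleanly: one must argue that surjectivity of an \emph{isometric quasi-embedding}, together with the Lipschitz boundary hypothesis, forces $|W \setminus F(V)| = 0$, and that $F$ being an isometry on the interior of $V$ means the change of variables formula gives $\int_{F(\mathrm{int}\,V)} |\psi|^2 \,dm = \int_{\mathrm{int}\,V} |F^\ast\psi|^2\,dm$ for all $\psi \in L^2(W)$. Since "isometric" is defined in the paper as preserving Riemannian metrics, the Jacobian of $F$ on the interior is $1$, and this identity is standard. Everything else in the proof is just plugging this into the inequality from Lemma \ref{Can_est}, so the corollary has a one-line proof once this point is recorded.

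\begin{proof}
Since $F$ is an isometric quasi-embedding of $V = \sqcup_{j=1}^N \overline{V_j}$ onto $W$ and each $V_j$ has Lipschitz boundary, the set $W \setminus F(\mathrm{int}\, V)$ is contained in $F(\partial V)$ and hence has Lebesgue measure zero. As $F$ preserves the Riemannian metric on the interior of $V$, its Jacobian there is identically $1$, so for every $\psi \in L^2(W)$ the change of variables formula gives $\|F^\ast\psi\|^2_{L^2(V)} = \|\psi\|^2_{L^2(F(\mathrm{int}\,V))} = \|\psi\|^2_{L^2(W)}$. In particular $F^\ast$ is an $L^2$-isometry onto its image, hence injective. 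Now fix $k \in \N$ and apply Lemma \ref{Can_est} to obtain a nonzero $\phi \in E_W(k)$ with $F^\ast\phi$ orthogonal to $E_V(k-1)$ and
\[
\mu_k(V) \le \frac{\|\phi\|^2_{L^2(W)}}{\|F^\ast\phi\|^2_{L^2(V)}}\,\mu_k(W).
\]
Since $\phi \neq 0$ and $F^\ast$ is an injective $L^2$-isometry, $F^\ast\phi \neq 0$ and $\|F^\ast\phi\|^2_{L^2(V)} = \|\phi\|^2_{L^2(W)}$, so the ratio above equals $1$ and $\mu_k(V) \le \mu_k(W)$.
\end{proof}
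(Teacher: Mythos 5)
Your proof is correct and follows essentially the same route as the paper: both arguments reduce to the observation that surjectivity plus the Lipschitz (hence measure-zero) boundary and the isometry property force $\|F^\ast\phi\|^2_{L^2(V)} = \|\phi\|^2_{L^2(W)}$, after which Lemma \ref{Can_est} gives the inequality immediately. You simply spell out the change-of-variables and the non-vanishing of $F^\ast\phi$ in more detail than the paper does.
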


\begin{proof}
Since the boundary of $V$ has vanishing Lebesgue measure (being Lipschitz) and since $F$ is an isometry, it follows that $\| \phi \|^2_{L^2(W)} = \| F^*\phi \|^2_{L^2(V)}$.
\end{proof}

We now have all the necessary ingredients to prove Theorem \ref{polya_stile}.
\begin{proof}[Proof of Theorem \ref{polya_stile} for Neumann eigenvalues]
The proof follows the same scheme as the proof of Theorem \ref{polya_stile} for
Dirichlet eigenvalues, using everywhere the corresponding results; notably,
monotonicity is replaced by the Corollary \ref{cor:neumon} and Lemma \ref{wk} is
replaced by Lemma \ref{prf}
\end{proof}

\section{Computational results} \label{sec:comp}

The proposed way of approaching P\'{o}lya's conjecture for a given domain $\Omega$ generates a sequence of 
extremal sets made up of copies of $\Omega$. As we have seen, this sequence encodes information as to whether the
generator set $\Omega$ satisifes the conjecture, which goes beyond whether the corresponding eigenvalues satisfy
inequalities~\eqref{PolyaConj}. These include the behaviour of the number of connected components of the sequence
of extremal sets and the behaviour of the largest scaling coefficient $r_{1,k}$, for instance.

In this section, we present an investigation of the set of ranks for which the
generator is a minimiser for the Dirichlet eigenvalues, and how the above
indicators evolve. 
We chose as generators the disk, the square, and a rectangle of aspect ratio
1:5. The reasons for choosing these generators are as follows.
\begin{itemize}
  \item The exact values of the eigenvalues are known, and can be computed to high accuracy even at high ranks.
    This would not necessarily be the case if we had to approximate eigenvalues
    using, say, finite element methods.
  \item It is not known whether or not the disk satisfies P\'olya's conjecture,
    as opposed to rectangles. This means that we can compare the evolution of
    the indicators in comparison for those two settings.
\end{itemize}

To generate the set of minimisers, we proceed in two steps. The first one
consists in creating a list of eigenvalues for the generators; for the square
and the rectangle this is not a problem since eigenvalues are given by sum of
squares of integers. For the disk the first step consists in generating the zeros of Bessel functions. We denote by
$j_{\nu,k}$ the $k$-th zero of the Bessel function $J_\nu$. The generation of
the list of $j_{\nu,k}$ was
done using the Chebfun MATLAB package \cite{DHT}. Two things were important to
consider:
\begin{itemize}
  \item Bessel functions of high rank $\nu$ are very small (under machine
    precision) but strictly positive
    for a large interval starting at $0$. Root finding algorithms
    would nevertheless find zeros in that range.
  \item All zeros have to be accounted for under a given value.
\end{itemize}
The first point is adressed by using the well-known fact that the first zero of the Bessel
functions $J_\nu$ is always located at some $x > \nu$, and $J_\nu$ is
sufficiently large within that range that no spurious zeros are found. The
second point is adressed by using the property that if $\nu' > \nu$, then for
all $k \in \N$, we
have that $j_{\nu',k} > j_{\nu,k}$. Hence, we can choose as natural stopping
points the first zero of a Bessel function of rank $N$. We then find all
$j_{\nu,k} \in [\nu,j_{N,1}]$ and we can be assured that no zeros have been
skipped. The following pseudo-code will generate the list of Dirichlet
eigenvalues of the disk (keeping in mind that the multiplicity of the
  eigenvalues coming from Bessel zeros of rank $\nu \ge 1$ is $2$, and those
coming from $J_0$ have multiplicity 1). 

\begin{pseudocode}{GenerateDiskEigenvalues}{N}
  bound = \text{First root of $J_N$ above $N$} \\
evalues = \text{All roots $j_{0,k}^2$ between $0$ and $bound$} \\
\FOR \nu = 1 \TO N \DO
evalues = evalues + \text{$2$ copies of all roots $j_{\nu,k}^2$  between  $\nu$ and
$bound$} \\
\RETURN{evalues}
\end{pseudocode}

To find the roots, we used the routine associated with the \emph{chebfun} type
of the aforementionned Chebfun package.

To find the minimisers, we used an approach based on Theorem \ref{wk}. For some
eigenvalue rank, say $k$, the minimiser is either the generator, or, for any
partition of the set of connected components into two subsets, these two subsets
themselves realise $\lambda_j^*(\CR)$ and $\lambda_{j'}^*(\CR)$ for some $j + j'
= k$. Furthermore, in any such case $\lambda_k^* = \lambda_j^* +
\lambda_{j'}^*$. We therefore can find the minimisers recursively, if we have a
list of the eigenvalues of the generator, and a list
of previous minimisers. The following pseudocode will generate such a list under
these conditions; it is defined recursively and outputs a pair consisting of the
list of minimal eigenvalues and a list of the ranks each connected component
making up the minimiser at rank $k$ minimises themselves, according to Theorem
\ref{wk}.

\begin{pseudocode}{$\{\text{minevs,ranks}\}$}{generatorevs,k}
  min = generatorevs[k] \\
  minrank = k \\
  \FOR j = 1 \TO k/2 \DO
  \IF minevs[j] + minevs[k - j] < min \THEN
  \BEGIN
  min = minevs[j] + minevs[k-j] \\
  minrank = j
  \END\\
  minevs[k] = min \\
  \IF minrank == k \THEN ranks[k] = \set{k}
  \ELSE ranks[k] = ranks[minrank] \cup ranks[k - minrank]
\end{pseudocode}

\begin{figure}[ht]
\centering
\includegraphics[width=0.8\textwidth]{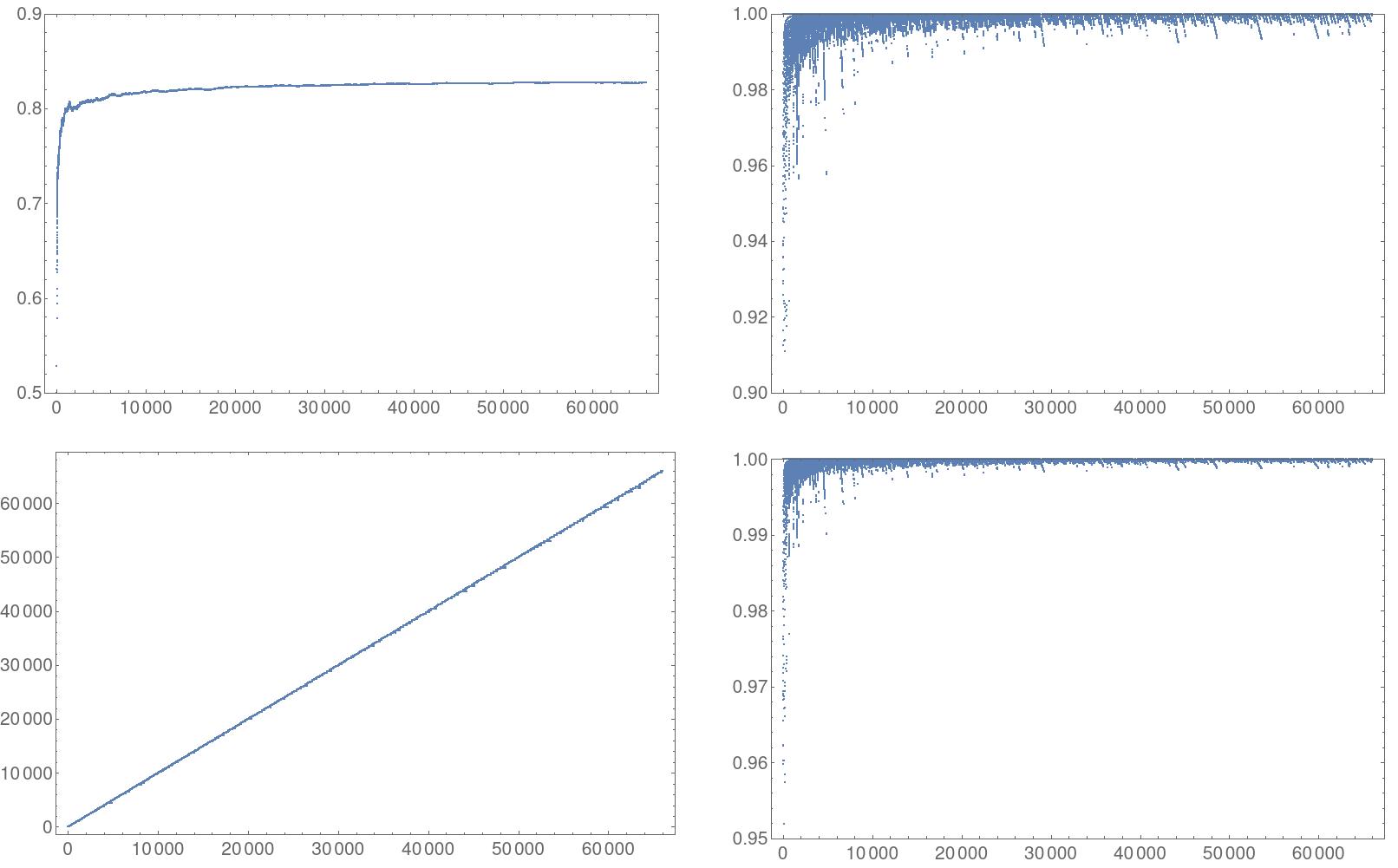}
\caption{Logarithmic density, largest value of coefficient $r_{k}$, largest rank of an eigenvalues on
one connected component and the corresponding logarithmic plot,
in the case of the disk.}
\label{fig:disk}
\end{figure}

The trichotomy in Theorem \ref{thm:trichotomy} indicates that if the generator itself
is a minimiser infinitely often in $\CR$, then P\'olya's conjecture
holds in this case, as well as for any disjoint union of it. As such, we
investigate the log-density of the number ranks for which the generator itself is a minimiser,
that is, the function defined in \eqref{eq:logdensity}. 

Theorem \ref{thm:tfae}
tells us that another indicator to verify is the largest homothety coefficient
$r_k$
of the minimiser, and that the strong P\'olya conjecture is equivalent to this
coefficient converhing to $1$ as $k \to \infty$. As seen in the proof of Theorem
\ref{thm:tfae}, this is implied also by the rank of the maximal eigenvalue
supported by one of the connected component growing asymptotically like $k$.

\begin{figure}[ht]
\centering
\includegraphics[width=0.8\textwidth]{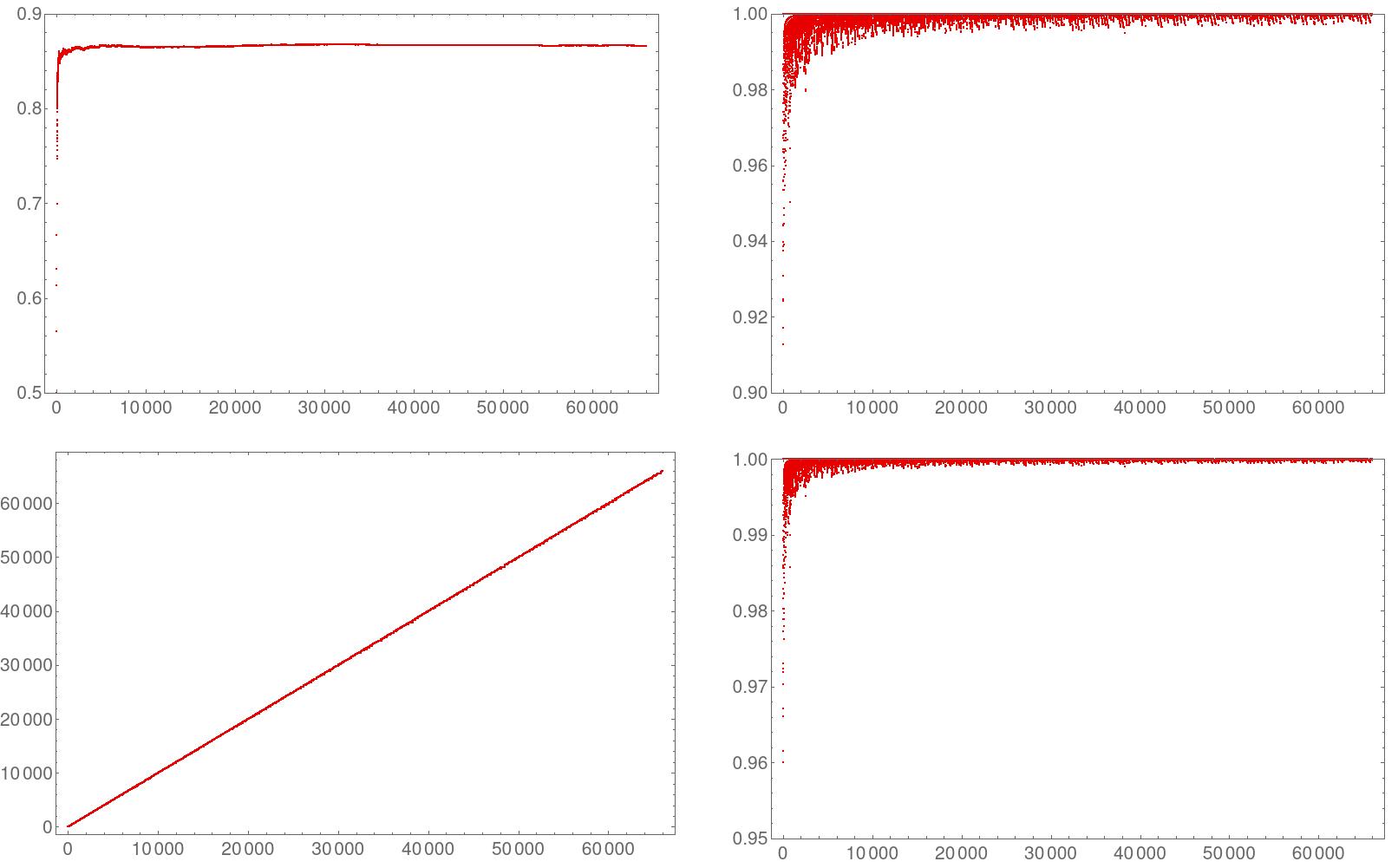}
\caption{Same as in Figure~\ref{fig:square}, now in the case of the square.}
\label{fig:square}
\end{figure}

We show these relevant quantities for the case of the disk in Figure~\ref{fig:disk}, with the
corresponding values for the square being shown in Figure~\ref{fig:square} for comparison. At a first
glance, the qualitative behaviour for these two examples appears to be similar, with the only major difference
that is visible is that the logarithimc density for the disk as a minimiser in the corresponding sequence
appears to be approaching a value somewhat below that of the square.

In view of Theorem \ref{smallo}, another interesting indicator is the number of
connected components of the minimisers. As we have proved, if it grows at
$\smallo k$ rate, $k$ being the eigenvalue rank, then P\'olya's conjecture holds. In the range
of eigenvalues that we investigated,  Figure~\ref{fig:hist} shows that
for the disk, square and a rectangle with side ratio $1:5$, the number of
connected components of the minimisers keeps quite small, both the disk and the
square having a maximum of five components, while the elongated
rectangle exhibits at most only three.

\begin{figure}[ht]
\centering
\includegraphics[width=1\textwidth]{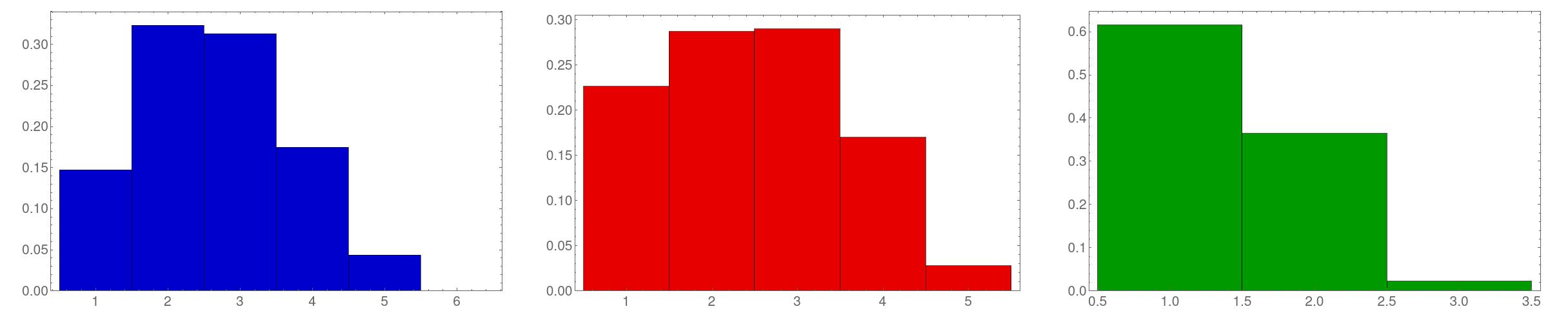}
\caption{Histograms of the number of components: from left to right, disk, square, and
rectangle with sides in the proportion of $1:5$.}
\label{fig:hist}
\end{figure}

Of course, one cannot deduce P\'olya's conjecture from these experiments. However, they show that from the perspective
of the quantities introduced in this paper the behaviour of the disk up to the range considered is not that dissimilar
from that of the square, for instance, which is known to satisfy P\'{o}lya's
conjecture. Furthermore, seeing that the behaviour of these indicators is in
line with P\'olya's conjecture holding, one might hope that it would be easier
to prove indirectly results about the number of connected components of an
extremiser, or about convergence to the generator.



\bibliographystyle{plain}

\begin{thebibliography}{DDDD}

\bibitem{anfr1}
P. R. S. Antunes and P. Freitas,
Numerical optimisation of low eigenvalues of the Dirichlet and Neumann Laplacians,
\emph{J. Optim. Theory Appl.} {\bf 154} (2012), 235--257.

\bibitem{anfr2}
P. R. S. Antunes and P. Freitas, Optimal spectral rectangles and lattice ellipses,
\emph{Proc. R. Soc. London, Ser. A} {\bf 469} (2013), 20120492. doi: 10.1098/rspa.2012.0492.

\bibitem{anfr3}
P. R. S. Antunes and P. Freitas,
Optimisation of eigenvalues of the Dirichlet Laplacian with a surface area restriction,
\emph{Appl. Math. Optim.} {\bf 73} (2016) 313--328.

\bibitem{anfrke}
P. R. S. Antunes, P.~Freitas, and J.~B. Kennedy,
Asymptotic behaviour and numerical approximation of optimal eigenvalues of the Robin Laplacian,
\emph{ESAIM Control Optim. Calc. Var.} {\bf 19} (2013), 438--459.

\bibitem{B}
F.A. Berezin,
Covariant and contravariant symbols of operators,
\emph{Izv. Akad. Nauk SSSR Ser. Mat.} (1972) \textbf{36}, 1134--1167

\bibitem{berburgit}
M. van den Berg, D. Bucur and K. Gittins,
Maximizing Neumann eigenvalues on rectangles,
\emph{Bull. London Math. Soc.} {\bf 48} (2016), 877--894.

\bibitem{bergit}
M. van den Berg and K. Gittins,
Minimising Dirichlet eigenvalues on cuboids of unit measure,
\emph{Mathematika} {\bf 63} (2017), 469--482.

\bibitem{BI}
M. van den Berg and M. Iversen,
On the minimisation of Dirichlet eigenvalues of the Laplace operator,
\emph{J. Geom. Anal.} (2013)  \textbf{23}, 660--676.


\bibitem{berg}
A. Berger,
The eigenvalues of the Laplacian with Dirichlet boundary condition in $\R^2$ are almost never minimized by disks,
\emph{Ann. Global Anal. Geom.} {\bf 47} (2015), 285--304. 

\bibitem{boou}
B. Bogosel and E. Oudet,
Qualitative and numerical analysis of a spectral problem with perimeter constraint
\emph{SIAM J. Control Optim.} {\bf 54} (2016), 317--340. 0

\bibitem{bufr}
D. Bucur and P. Freitas,
Asymptotic behaviour of optimal spectral planar domains with fixed perimeter,
\emph{J. Math. Phys.} {\bf 54} (2013), 053504. http://dx.doi.org/10.1063/1.4803140.

\bibitem{buhe}
D. Bucur and A. Henrot,
Maximization of the second non-trivial Neumann eigenvalue,
\emph{Acta Math.} {\bf 222} (2019), 337--361..

\bibitem{canz}
Y. Canzani,
\emph{Analysis on Manifolds via the Laplacian},
{Lecture notes, Harvard University} (2013), 114 pp.

\bibitem{CE}
B. Colbois and A. El Soufi,
Extremal eigenvalues of the Laplacian on Euclidean domains and closed surfaces,
\emph{Math. Z.} {\bf 278} (2014), 529--549.


\bibitem{DHT}
  T. A. Driscoll, N. Hale, and L. N. Trefethen, editors,
  \emph{Chebfun Guide}, Pafnuty Publications, Oxford, 2014.

\bibitem{fab}
G. Faber, Beweis, dass unter allen homogenen membranen von
gleicher fl\"{u}che und gleicher spannung die kreisf\"{u}rmige den
tiefsten grundton gibt, Sitz. ber. bayer. Akad. Wiss. (1923),
169--172.

\bibitem{fre1}
P. Freitas,
Asymptotic behaviour of extremal averages of Laplacian eigenvalues.
\emph{J. Stat. Phys.} {\bf 167} (2017), 1511--1518.

\bibitem{fre2}
P. Freitas,
A remark on Pólya's conjecture at low frequencies,
\emph{Arch. Math. (Basel)} {\bf 112} (2019), 305--311. 

\bibitem{frke}
P. Freitas and J.B. Kennedy,
Extremal domains and Pólya-type inequalities for the Robin Laplacian on rectangles and unions of rectangles,
\emph{Int. Math. Res. Not. IMRN}, to appear.

\bibitem{gitlar}
K. Gittins and S. Larson,
Asymptotic behaviour of cuboids optimising Laplacian eigenvalues,
\emph{Integral Equations Operator Theory} {\bf 89} (2017), 607--629.

\bibitem{kell}
R.~Kellner,
On a theorem of P\'{o}lya,
\emph{Amer.\ Math.\ Monthly} \textbf{73} (1966), 856--858.

\bibitem{krahn1}
E. Krahn,
\"{U}ber eine von Rayleigh formulierte minimaleigenschaft des kreises,
\emph{Math. Annalen} {\bf 94} (1924),
97--100.

\bibitem{krahn2}
E. Krahn, \"{U}ber Minimaleigenshaften der Kugel in drei und mehr
Dimensionen, \emph{Acta Comm. Univ. Dorpat.} {\bf A9} (1926), 1--44.
[English translation: Minimal properties of the sphere in three and more dimensions,
Edgar Krahn 1894-1961: A Centenary Volume, \"{U}. Lumiste
and J. Peetre, editors, IOS Press, Amsterdam, 1994, Chapter 11,
pp. 139--174.]

\bibitem{K}
P. Kröger,
Upper bounds for the Neumann eigenvalues on a bounded domain in Euclidean space, \emph{J. Funct. Anal} (1992) \textbf{106}(2): pp. 353--357.

\bibitem{lars}
S. Larson,
Asymptotic shape optimization for Riesz means of the Dirichlet Laplacian over convex domains,
preprint arXiv:1611.0568 [math.SP] (2016).

\bibitem{LY}
P. Li and S.-T. Yau.
On the Schrödinger equation and the eigenvalue problem, \emph{Comm. Math. Phys.} \textbf{88}(1983), 309--318.

\bibitem{mars}
N. Marshall,
Stretching convex domains to capture many lattice points,
preprint arXiv:1707.00682v3 (2017).

\bibitem{oude}
E. Oudet, Numerical minimization of eigenmodes of a membrane with
respect to the domain, \emph{ESAIM Control Optim. Calc. Var.} {\bf 10}
(2004), 315--330.

\bibitem{PRF}
G. Poliquin and G. Roy-Fortin.
Wolf-Keller theorem for Neumann eigenvalues, \emph{Ann. Sci. Math. Québec} \textbf{36} (2010), 169--178.

\bibitem{poly}
P\'{o}lya, G. 1961
On the eigenvalues of vibrating membranes,
{\it Proc. London Math. Soc.} {\bf 11}, 419--433.

\bibitem{sava}
Yu. Safarov \& D. Vassiliev,
{\em The asymptotic distribution of eigenvalues of partial differential operators},
American Mathematical Society 1997, series Translations of Mathematical Monographs, vol. 155.

\bibitem{urak}
H. Urakawa,
Lower bounds for the eigenvalues of the fixed vibrating membrane problems,
\emph{T\^{o}hoku Math. J.} {\bf 36} (1984), 85--89.

\bibitem{wein}
H. F. Weinberger,
An isoperimetric inequality for the $N$-dimensional free membrane problem. \emph{J. Rational Mech. Anal.} {\bf 5} (1956), 633--636.

\bibitem{WK}
S. A. Wolf and J. B. Keller,
Range of the first two eigenvalues of the Laplacian, \emph{Proc. R. Soc. Lond. A.}, (1994), \textbf{447}:pp. 397--412.


\end{thebibliography}

\end{document}